\documentclass[reqno,english]{amsart}

\usepackage{soul}

\usepackage[utf8]{inputenc}
\usepackage{url}
\usepackage{etoolbox}           % For \ifstrempty{<string>}{then}{else} command
\usepackage{xargs}           	% For \newcommandx command
\usepackage{xstring}           	% For string manipulation
\usepackage{graphics}
\usepackage{graphicx}
\usepackage{amsmath}
\usepackage{amssymb}
\usepackage{mathrsfs}
\usepackage{mathtools}
\usepackage[noend]{algpseudocode}
\usepackage{cite}
\usepackage[usenames,dvipsnames]{xcolor}
\usepackage{enumerate}
\usepackage{comment}
\usepackage{multirow}
\usepackage[english]{babel}

\usepackage{enumitem}
\usepackage{ifpdf}
\usepackage{color}
\usepackage{breqn}
\usepackage{listings}
\usepackage{changepage}
\usepackage{array}
\usepackage{subcaption}

\usepackage[
	breaklinks,
	pdfpagelabels,
	bookmarksnumbered,
	bookmarks      = false,
	hypertexnames  = false,
	plainpages     = false,
	colorlinks     = true,
	linkcolor      = MidnightBlue,
	citecolor      = Green,
	hyperfootnotes = true]{hyperref}
%\usepackage[bookmarks = false]{hyperref}
% make amsthm.sty loadable
\expandafter\let\csname ver@amsthm.sty\endcsname\relax
% remove the definitions that will be redone by amsthm
\let\theoremstyle\relax

\usepackage{amsthm}           % For \theoremstyle command
\usepackage{cleveref}         % For automatic reference names (\cref)
\usepackage{algorithm}

% Directional derivative \dirder{f}{x}[v]
\newcommandx\dirder[3][{3=\cdot}]{
	#1'({}#2{};{}#3{})
}
% Jacobian \jac{f}{x}[v]
\newcommandx\jac[3][{3={}}]{
	J#1
	\ifstrempty{#2}{}{%
		\ifstrempty{#3}{({}#2{})}{%
			({}#2{})[{}#3{}]%
		}%
	}%
}
% B-derivative \Bjac{f}{x}[v]
\newcommandx\Bjac[3][{3={}}]{
	D#1
	\ifstrempty{#2}{}{%
		\ifstrempty{#3}{({}#2{})}{%
			({}#2{})[{}#3{}]%
		}%
	}%
}
% 2nd epiderivative \twiceepi[v]{f}{x}[d]
\newcommandx\twiceepi[4][{1={}},{4={}}]{
	{\rm d}^2#2
	\ifstrempty{#3}{}{% x given
		\ifstrempty{#1}{{(}{#3}{)}}{% v given
			{(}{#3}{|}{#1}{)}%
		}%
	}%
	\ifstrempty{#4}{}{% v given
			{[}{#4}{]}%
	}%
}
\newcommand\DEF[1]{\emph{#1}}
\newcommandx\seq[2][{2=k\in\Nn}]{(#1)_{#2}}
\newcommand\proj[1]{P_{#1}}
\newcommand\set[2]{%
	\ifstrempty{#1}{
		\left\{#2\right\}
	}{
		\left\{#1{}\mid{}#2\right\}
	}
}
\newcommand{\T}{\top} % transpose
\newcommand\cont{C}

\algdef{SE}{Loop}{EndLoop}{\textbf{loop}}{\textbf{end}}

 %lower bounds
 %upper bounds
 % nice sum

%\usepackage{bbm}
%\renewcommand{\Re}{\mathbb R}
\renewcommand{\Re}{{\rm{I\!R}} }
\newcommand{\Nn}{{\rm{I\!N}} }

\newcommand{\dom}{\mathop{\rm dom}\nolimits}

\newcommand{\relint}{\mathop{\rm relint}\nolimits}
\newcommand{\dist}{\mathop{\rm dist}\nolimits}
\newcommand{\diag}{\mathop{\rm diag}\nolimits}

\newcommand{\prox}{\mathop{\rm prox}\nolimits}

\DeclareMathOperator*{\minimize}{{\rm minimize}}

\newcommand{\Rinf}{\overline{\rm I\!R} }

\DeclareMathOperator*{\argmin}{\arg\!\min}

\newcommand{\innprod}[2]{\langle #1,#2 \rangle}
\newcommand{\zer}{\mathop{\rm zer}\nolimits}

\newcommand{\ie}{\emph{i.e.}}

\newcommand{\OurSpace}{\Re^n}

%\numberwithin{equation}{section}

\definecolor{red}{rgb}{1,0,0}
\definecolor{yellow}{rgb}{1,1,0}
\definecolor{blue}{rgb}{0,0,1}

\algtext*{EndWhile}% Remove "end while" text
\algtext*{EndIf}% Remove "end if" text

% \newcolumntype{H}{>{\setbox0=\hbox\bgroup}c<{\egroup}@{}}

\newcolumntype{L}[1]{>{\raggedright\let\newline\\\arraybackslash\hspace{0pt}}m{#1}}
\newcolumntype{C}[1]{>{\centering\let\newline\\\arraybackslash\hspace{0pt}}m{#1}}
\newcolumntype{R}[1]{>{\raggedleft\let\newline\\\arraybackslash\hspace{0pt}}m{#1}}

%%%%%%%%%%%%%%%%%%%%%%%%%%%%%%%%%%%%%%%%%%%%%%%%%%%%%%%%%%%%%%%%%%%%%%%%%%%%%%%%
%2345678901234567890123456789012345678901234567890123456789012345678901234567890
%        1         2         3         4         5         6         7         8
\makeatletter
	\@addtoreset{equation}{section}
\makeatother

\newlist{enumalg}{enumerate}{1}
\setlist[enumalg]{label=\textit{(C\arabic*)},ref=\thealgorithm(C\arabic*)}
\crefalias{enumalgi}{algorithm} % alias 'enumalgi' counter to 'algorithm'

% The following two commands are needed to append "in the Appendix" when
% (clever-)referencing to theorems in the appendix from outside the appendix
\def\amiinappendix{0}
\newcommand\inappendixstr{%
	\expandafter\ifstrequal\expandafter{\amiinappendix}{1}{}{ in the Appendix}%
}%

%\createNewTheorem{1=label}[2=counter]{3=TheoremName}[4=number by]
\newcommandx\CreateNewTheorem[4][{2={}},{4={}}]{%
	\ifstrempty{#2}{%
		\ifstrempty{#4}{%
			\newtheorem{#1}{#3}%
			\newtheorem{appendix#1}{#3}%
		}{%
			\newtheorem{#1}{#3}[#4]%
			\newtheorem{appendix#1}{#3}[#4]%
		}%
	}{%
		\ifstrempty{#4}{%
			\newtheorem{#1}[#2]{#3}%
			\newtheorem{appendix#1}[#2]{#3}%
		}{%
			\newtheorem{#1}[#2]{#3}[#4]%
			\newtheorem{appendix#1}[#2]{#3}[#4]%
		}%
	}%
		\crefname{#1}{\MakeLowercase #3}{\MakeLowercase #3s}
		\Crefname{#1}{\MakeUppercase #3}{\MakeUppercase #3s}
		\crefformat{#1}{##2\MakeLowercase #3 ##1##3}
		\Crefformat{#1}{##2\MakeUppercase #3 ##1##3}
		\crefmultiformat{#1}{##2\MakeLowercase #3s ##1##3}{ and ##2##1##3}{, ##2##1##3}{ and ##2##1##3}
		\Crefmultiformat{#1}{##2\MakeUppercase #3s ##1##3}{ and ##2##1##3}{, ##2##1##3}{ and ##2##1##3}
	% set up a dedicated enumeration environment for <counter>
		\newlist{enum#1}{enumerate}{1}
		\setlist[enum#1]{%
			{label=(\roman*)},%
			{ref=\expandafter\csname the#1\endcsname(\roman*)},%
			{itemsep=3pt},%
			{labelwidth=15pt},% approx. width of \emph{(iii)}
			{labelsep=5pt},%
			{itemindent=0pt},%
			{leftmargin=20pt},% labelsep + labelwidth - itemindent
		}%
		\crefalias{enum#1i}{#1} % alias 'enum<counter>' counter to '<counter>'
	%%%%
	%%%%
	%%%%
		\crefname{appendix#1}{\MakeLowercase #3}{\MakeLowercase #3s}
		\Crefname{appendix#1}{\MakeUppercase #3}{\MakeUppercase #3s}
		\crefformat{appendix#1}{##2\MakeLowercase #3 ##1##3\inappendixstr{}}
		\Crefformat{appendix#1}{##2\MakeUppercase #3 ##1##3\inappendixstr{}}
		\crefmultiformat{appendix#1}{##2\MakeLowercase #3s ##1##3}{ and ##2##1##3}{, ##2##1##3}{ and ##2##1##3\inappendixstr{}}
		\Crefmultiformat{appendix#1}{##2\MakeUppercase #3s ##1##3}{ and ##2##1##3}{, ##2##1##3}{ and ##2##1##3\inappendixstr{}}
	% set up a dedicated enumeration environment for appendix<counter>
		\newlist{enumappendix#1}{enumerate}{1}
		\setlist[enumappendix#1]{label=(\roman*),ref=\expandafter\csname theappendix#1\endcsname(\roman*)}
		\crefalias{enumappendix#1i}{appendix#1} % alias 'enumappendix<counter>' counter to 'appendix<counter>'
}

\theoremstyle{definition}
	\CreateNewTheorem{es}{Example}[section]
	
	\CreateNewTheorem{fact}[es]{Fact}
	\CreateNewTheorem{rem}[es]{Remark}

\theoremstyle{plain}
	\CreateNewTheorem{defin}[es]{Definition}
	\CreateNewTheorem{thm}[es]{Theorem}
	\CreateNewTheorem{prop}[es]{Proposition}
	\CreateNewTheorem{cor}[es]{Corollary}
	\CreateNewTheorem{lem}[es]{Lemma}
	\CreateNewTheorem{ass}{Assumption}

\AtBeginEnvironment{appendix}{%
	\def\amiinappendix{1}%
	\renewenvironment{es}[1][]{\begin{appendixes}[{#1}]}{\end{appendixes}}%
	\renewenvironment{rem}[1][]{\begin{appendixrem}[{#1}]}{\end{appendixrem}}%
	\renewenvironment{defin}[1][]{\begin{appendixdefin}[{#1}]}{\end{appendixdefin}}%
	\renewenvironment{thm}[1][]{\begin{appendixthm}[{#1}]}{\end{appendixthm}}%
	\renewenvironment{prop}[1][]{\begin{appendixprop}[{#1}]}{\end{appendixprop}}%
	\renewenvironment{cor}[1][]{\begin{appendixcor}[{#1}]}{\end{appendixcor}}%
	\renewenvironment{lem}[1][]{\begin{appendixlem}[{#1}]}{\end{appendixlem}}%
	\renewenvironment{ass}[1][]{\begin{appendixass}[{#1}]}{\end{appendixass}}%
}%

% NOTE: for environments defined with \newtheorem see Theorems_Equations.tex

% \crefrange{<label1>}{<label2>}     (x2)  prints cross-reference range
% \namecref{<label>}                 (x4)  prints reference name without label (eg Section)
% \labelcref{<label>}                (x1)  prints label without reference name (eg 1.3)

% \cpageref{<label>}                 (x2)  specific for pages
% \cpagerefrange{<label1>}{<label2>} (x2)  ''
% \labelcpageref{<label>}            (x1)  ''

% OTHER TIPS
% \cref{eq1,eq2,eq3,,eq4}    the double comma avoids compression (eg eq.s (1)-(3) and (4))
% \label[<type>]{<label>}    produces a label of type <type> (overrides the true type)

%%%
%%% S E C T I O N S
%%%
\makeatletter
\@addtoreset{paragraph}{subsection}
\makeatother

\crefformat{section}{#2§#1#3}
\Crefformat{section}{#2Section #1#3}
\crefmultiformat{section}{§#2#1#3}{ and §#2#1#3}{, §#2#1#3}{ and §#2#1#3}
\Crefmultiformat{section}{Sections #2#1#3}{ and #2#1#3}{, #2#1#3}{ and #2#1#3}

\crefformat{subsection}{#2§#1#3}
\Crefformat{subsection}{#2Section #1#3}
\crefmultiformat{subsection}{§#2#1#3}{ and §#2#1#3}{, §#2#1#3}{ and §#2#1#3}
\Crefmultiformat{subsection}{Sections #2#1#3}{ and #2#1#3}{, #2#1#3}{ and #2#1#3}

\crefformat{subsubsection}{#2§#1#3}
\Crefformat{subsubsection}{#2Section #1#3}
\crefmultiformat{subsubsection}{§#2#1#3}{ and §#2#1#3}{, §#2#1#3}{ and §#2#1#3}
\Crefmultiformat{subsubsection}{Sections #2#1#3}{ and #2#1#3}{, #2#1#3}{ and #2#1#3}

\crefformat{paragraph}{#2§#1#3}
\Crefformat{paragraph}{#2Section #1#3}
\crefmultiformat{paragraph}{§#2#1#3}{ and §#2#1#3}{, §#2#1#3}{ and §#2#1#3}
\Crefmultiformat{paragraph}{Sections #2#1#3}{ and #2#1#3}{, #2#1#3}{ and #2#1#3}

\crefformat{appendix}{#2§#1#3}
\Crefformat{appendix}{#2Appendix #1#3}
\crefmultiformat{appendix}{§#2#1#3}{ and §#2#1#3}{, §#2#1#3}{ and §#2#1#3}
\Crefmultiformat{appendix}{#2Appendices #1#3}{ and #2#1#3}{, #2#1#3}{ and #2#1#3}

%%%
%%% I T E M S
%%%
\crefformat{enumi}{#2#1#3}
\Crefformat{enumi}{#2#1#3}
\crefmultiformat{enumi}{#2#1#3}{ and #2#1#3}{, #2#1#3}{ and #2#1#3}
\Crefmultiformat{enumi}{#2#1#3}{ and #2#1#3}{, #2#1#3}{ and #2#1#3}

%%%
%%% E Q U A T I O N S
%%%
\crefformat{equation}{(#2#1#3)}
\Crefformat{equation}{Equations (#2#1#3)}
\crefmultiformat{equation}{(#2#1#3)}{ and (#2#1#3)}{, (#2#1#3)}{ and (#2#1#3)}
\Crefmultiformat{equation}{(#2#1#3)}{ and (#2#1#3)}{, (#2#1#3)}{ and (#2#1#3)}
\labelcrefrangeformat{equation}{#3(#1)#4--#5(#2)#6}

\labelcrefformat{subequation}{#2(#1)#3}
\labelcrefrangeformat{subequation}{#3(#1)#4--#5(#2)#6}

\makeatletter
\crefrangelabelformat{equation}{%
  \protected@edef\eq@ref@a{#1}%
  \protected@edef\eq@ref@b{#2}%
  \expandafter\expandafter\expandafter
  \eq@ref@check\expandafter\eq@ref@a\expandafter.\expandafter\@nil
                           \eq@ref@b.\@nil
  (%
    #3\eq@ref@a#4%
  )%
   --%
  (%
    #5\eq@ref@b#6%
  )%
}
\def\eq@ref@check#1.#2\@nil#3.#4\@nil{%
  \def\eq@tmp{#2}%
  \ifx\eq@tmp\@empty
  \else
    \def\eq@tmp{#4}%
    \ifx\eq@tmp\@empty
    \else
      \def\eq@tmp@a{#1}%
      \def\eq@tmp@b{#3}%
      \ifx\eq@tmp@a\eq@tmp@b
        \expandafter\def\expandafter\eq@ref@b\expandafter{%
          \eq@strip@dot#4\@nil
        }%
      \fi 
    \fi
  \fi
}
\def\eq@strip@dot#1.\@nil{#1}
\makeatother

%%%
%%% A L G O R I T H M S
%%%
%\renewcommand\thealgorithm{\Roman{algorithm}}
%\makeatletter
%\renewcommand{\theALC@unique}{\arabic{ALC@line} of Alg. \thealgorithm}
%\newcommand{\theALC@Unique}{\arabic{ALC@line} of Algorithm \thealgorithm}
%\makeatother
%\AtBeginEnvironment{algorithmic}{\stepcounter{algorithm}}%
%\AtEndEnvironment{algorithmic}{\addtocounter{algorithm}{-1}}%

%\crefname{algorithm}{Alg.}{Alg.}
%\crefname{algorithm}{Algorithm}{Algorithms}
\crefformat{algorithm}{#2Alg. #1#3}
\Crefformat{algorithm}{#2Algorithm #1#3}
\crefmultiformat{algorithm}{#2Alg. #1#3}{ and #2#1#3}{, #2#1#3}{ and #2#1#3}
\Crefmultiformat{algorithm}{#2Algorithms #1#3}{ and #2#1#3}{, #2#1#3}{ and #2#1#3}

%\crefname{ALC@unique}{line}{lines}
%\crefname{ALC@unique}{Line}{Lines}
\crefformat{ALC@unique}{#2step #1#3}
\Crefformat{ALC@unique}{#2Step #1#3}
\crefmultiformat{ALC@unique}{#2steps #1#3}{ and #2#1#3}{, #2#1#3}{ and #2#1#3}
\Crefmultiformat{ALC@unique}{#2Steps #1#3}{ and #2#1#3}{, #2#1#3}{ and #2#1#3}
\crefrangelabelformat{ALC@unique}{#3#1#4--#5\crefstripprefix{#1}{#2}#6}

%%%
%%% O T H E R   E N V I R O N M E N T S
%%%
\crefformat{figure}{#2Fig. #1#3}
\Crefformat{figure}{#2Figure #1#3}
\crefmultiformat{figure}{#2Fig. #1#3}{ and #2#1#3}{, #2#1#3}{ and #2#1#3}
\Crefmultiformat{figure}{#2Figures #1#3}{ and #2#1#3}{, #2#1#3}{ and #2#1#3}

\crefformat{table}{#2Tbl. #1#3}
\Crefformat{table}{#2Table #1#3}
\crefmultiformat{table}{#2Fig. #1#3}{ and #2#1#3}{, #2#1#3}{ and #2#1#3}
\Crefmultiformat{table}{#2Tables #1#3}{ and #2#1#3}{, #2#1#3}{ and #2#1#3}

\frenchspacing

\title[F-B quasi-Newton methods for nonsmooth optimization problems]
    {Forward-backward quasi-Newton methods for nonsmooth optimization problems}
\author{Lorenzo Stella, Andreas Themelis, Panagiotis Patrinos}
\date{\today}

\begin{document}

\begin{abstract}
	The forward-backward splitting method (FBS) for minimizing a nonsmooth composite function
can be interpreted as a (variable-metric) gradient method over a continuously differentiable function which we call
forward-backward envelope (FBE). This allows to extend algorithms for smooth unconstrained
optimization and apply them to nonsmooth (possibly constrained) problems.
Since the FBE and its gradient can be computed by simply evaluating forward-backward steps,
the resulting methods rely on the very same black-box oracle as FBS.
We propose an algorithmic scheme that enjoys the same global convergence properties of FBS when the problem is convex,
or when the objective function possesses the Kurdyka-{\L}ojasiewicz property at its critical points.
Moreover, when using quasi-Newton directions the proposed method achieves superlinear convergence
provided that usual second-order sufficiency conditions on the FBE hold at the limit point of the generated sequence.
Such conditions translate into milder requirements on the original function involving generalized second-order differentiability.
We show that BFGS fits our framework and that the limited-memory variant L-BFGS is well suited for large-scale problems,
greatly outperforming FBS or its accelerated version in practice.
The analysis of superlinear convergence is based on an extension of the Dennis and Mor\'e theorem for the proposed algorithmic scheme.

\end{abstract}

\maketitle

%%%%%%%%%%%%%%%%%%%%%%%%%%%%%%%%%%%%%%%%%%%%%%%%%%%%%%%%%%%%%%%%%%%%%%%%%%%%%%%%
%%%%%%%%%%%%%%%%%%%%%%%%%%%%%%%%%%%%%%%%%%%%%%%%%%%%%%%%%%%%%%%%%%%%%%%%%%%%%%%%

	\section{Introduction}
		\label{SEC:Introduction}
		In this paper we focus on nonsmooth optimization problems over $\OurSpace$
of the form
\begin{equation}\minimize_{x\in \OurSpace}\ \varphi(x) \equiv f(x) + g(x),
\label{eq:GenProb}\end{equation}
where $f$ is a smooth (possibly nonconvex) function,
while $g$ is a proper, closed, convex (possibly nonsmooth) function with
cheaply computable proximal mapping~\cite{moreau1965proximiteet}. Problems of
this form appear in several application fields such as control, system identification,
signal and image processing, machine learning and statistics.

Perhaps the most well known algorithm to solve problem~\eqref{eq:GenProb} is the
forward-backward splitting (FBS), also known as proximal gradient method
\cite{lions1979splitting, combettes2011proximal}, which generalizes the classical gradient
method to problems involving an additional nonsmooth term. Convergence of the iterates
of FBS to a critical point of problem~\eqref{eq:GenProb} has been shown, in the general
nonconvex case, for functions $\varphi$ having the Kurdyka-{\L}ojasiewicz property
\cite{lojasiewicz1963propriete,lojasiewicz1993geometrie,kurdyka1998gradients,Attouch2013}.
This assumption was used to prove convergence of many other
algorithms \cite{Attouch2009,Attouch2010,Attouch2013,bolte2014proximal,Ochs2014}.
The global convergence rate of FBS is known to be sublinear of order $O(1/k)$ in the convex case,
where $k$ is the iteration count, and can be improved to $O(1/k^2)$ with techniques based on the
work of Nesterov \cite{Nesterov1983,tseng2008accelerated,beck2009fast,nesterov2013gradient}.
Therefore, FBS is usually efficient for computing solutions with small to medium precision only
and, just like all first order methods, suffers from ill-conditioning of the problem at hand. A remedy to this
is to add second-order information in the computation of the forward and backward steps, so to
better scale the problem and achieve superlinear asymptotic convergence. As proposed by several authors
\cite{becker2012quasi,lee2012proximal,scheinberg2013},
this can be done by computing the gradient steps and proximal steps according to the $Q$-norm
rather than the Euclidean norm, where $Q$ is the Hessian of $f$ or some approximation to it.
This approach has the severe limitation that, unless $Q$ has a very particular structure,
the backward step becomes now very hard and requires an inner iterative procedure
to be computed.

In the present paper we follow a different approach. We define a function, which we call
\emph{forward-backward envelope} (FBE) that serves as a real-valued, continuously differentiable, exact penalty function
for the original problem. Furthermore, forward-backward splitting is shown to be equivalent to
a (variable-metric) gradient method applied to the problem of minimizing the FBE. The value and gradient of the
FBE can be computed solely based on the evaluation of a forward-backward step at the point of interest.
For these reasons, the FBE works as a surrogate of the Moreau envelope \cite{moreau1965proximiteet}
for composite problems of the form~\eqref{eq:GenProb}.
Most importantly, this opens up the possibility of using well known smooth unconstrained optimization
algorithms, with faster asymptotic convergence properties than the gradient method, to minimize the
FBE and thus solve \eqref{eq:GenProb}, which is nonsmooth and possibly constrained.
This approach was first explored in \cite{patrinos2013proximal}, where two Newton-type method were proposed,
and combines and extends ideas stemming from the literature on merit functions for
\emph{variational inequalities} (VIs) and \emph{complementarity problems} (CPs),
specifically the reformulation of a VI as a constrained continuously differentiable optimization problem
via the regularized gap function \cite{fukushima1992equivalent}
and as an unconstrained continuously differentiable optimization problem via the D-gap function \cite{yamashita1997unconstrained}
(see \cite[§10]{facchinei2003finite} for a survey and \cite{Li2007exact},
\cite{patrinos2011global} for applications to constrained optimization and model predictive control of dynamical systems).

Then we propose an algorithmic scheme, based on line-search methods, to minimize the FBE.
In particular, when descent steps are taken along quasi-Newton directions, superlinear
convergence can be achieved when usual nonsingularity assumptions hold at the limit point of the sequence
of iterates. The asymptotic analysis is based on an analogous of the
Dennis and Mor\'e theorem \cite{dennis1974characterization} for the proposed algorithmic scheme,
and the BFGS quasi-Newton method is shown to fit this framework. Its limited memory variant
L-BFGS, which is suited for large scale problems, is also analyzed.
At the same time, we show that our algorithm enjoys the same global convergence properties of FBS under the same assumptions on the original function \(\varphi\), despite our method operates on the surrogate \(\varphi_\gamma\).
%when $\varphi$ has the Kurdyka-{\L}ojasiewicz property
%\cite{lojasiewicz1963propriete,lojasiewicz1993geometrie,kurdyka1998gradients}
%at its critical points.
Unlike the approaches of \cite{becker2012quasi,lee2012proximal,scheinberg2013}, our algorithm is based on the very same
forward-backward operations as FBS, and does not require the solution to any inner problem.

The contributions of this work can be summarized as follows. We give an interpretation
of forward-backward splitting as a (variable-metric) gradient method over a continuously differentiable function,
the forward-backward envelope (FBE). Then we propose an algorithmic scheme for solving problem
\eqref{eq:GenProb} based on line-search methods applied to the problem of minimizing the FBE,
and prove that it converges globally to a critical point when $\varphi$ is convex or has the
Kurdyka-{\L}ojasiewicz property. This is a crucial feature of our approach: in fact, the FBE is nonconvex in general, and
there exist examples showing how classical line-search methods need not converge to critical points
for nonconvex functions \cite{Dai2002,Mascarenhas2004,Mascarenhas2007,Dai2013}.
When $\varphi$ is convex, in addition, global sublinear convergence of order $O(1/k)$ (in the objective value) is proved.
Finally, we show that when the directions of choice satisfy the Dennis-Mor\'e condition, then the
method converges superlinearly, under appropriate assumptions, and illustrate when this is the
case for BFGS.

The paper is organized as follows.
\Cref{SEC:FBE} introduces the forward-backward
envelope function and illustrates its properties.
In \Cref{SEC:Algorithms} we propose our algorithmic scheme and prove 
its global convergence properties. Linear convergence is also discussed.
\Cref{SEC:QuasiNewton} is devoted to the asymptotic convergence analysis in the particular case where,
in the proposed method, quasi-Newton directions are used and specialize this result to the case of BFGS.
Limited-memory directions are also discussed.
Finally, \Cref{SEC:Simulations} illustrates numerical results obtained with the proposed method.
Some of the proofs are deferred to the Appendix for the sake of readability.
Moreover, for the reader's convenience, \Cref{APP:Definitions} will list some definitions and known results on generalized differentiability which are needed in the analysis.

		\subsection{Notation and background}
			Throughout the paper, $\innprod{{}\cdot{}}{{}\cdot{}}$ is an inner product over $\OurSpace$ and $\|{}\cdot{}\|=\sqrt{\innprod{{}\cdot{}}{{}\cdot{}}}$ is the induced norm.
The set of continuously differentiable functions
on $\Re^n$ having $L$-Lipschitz continuous gradient (also refferred to as $L$-smooth) is denoted by $\cont_{L}^{1,1}(\OurSpace)$. We denote
the extended real line as $\Rinf \equiv \Re \cup \set{}{+\infty}$.
The set of proper, closed, convex functions from $\OurSpace$ with values in $\Rinf$ is referred to as $\Gamma_0(\OurSpace)$.
Given a function $h$ on $\OurSpace$,
%the $\alpha$-level set is denoted as
%$$ \lev_{\alpha}h = \{x\in \OurSpace:\ h(x) \leq \alpha\}\subseteq \OurSpace. $$
% while $h$ is said to be \emph{level-coercive} if
% $$ \liminf_{\|x\|\to\infty}\frac{h(x)}{\|x\|} > 0. $$
the subdifferential $\partial h(x)$ of $h$ at $x$ is considered in the sense
of \cite[Def.~8.3]{rockafellar2011variational}, that is
\begin{align*}
	\partial h(x)
{}={} &
	\set{v\in\Re^n}{
		\exists\ \seq{x^k}, \seq{v^k\in\hat\partial h(x^k)} ~\text{s.t.}~ x^k\to x, v^k\to v
	}
\intertext{where}
	\hat\partial h(x)
{}={} &
	\set{v\in\Re^n}{
		h(z)\geq h(x) + \innprod{v}{z-x} + o(\|z-x\|), \forall z\in\Re^n
	}.
\end{align*}
This includes the ordinary gradient in the case of continuously differentiable functions, while for $g\in\Gamma_0(\OurSpace)$ it is equivalent to
\[
	\partial g(x)
{}={}
	\set{v\in\Re^n}{
		g(y) \geq g(x) + \innprod{v}{y-x},\ \mathrm{for\ all}\ y\in \OurSpace
	}.
\]
We denote the set of \emph{critical points} associated with problem \eqref{eq:GenProb} as
\[
	\zer\partial\varphi
{}={}
	\set{x\in\Re^n}{0\in\partial \varphi(x)}
{}={}
	\set{x\in\Re^n}{-\nabla f(x)\in\partial g(x)}.
\]
The second equality is due to \cite[Ex.~8.8]{rockafellar2011variational}. A necessary condition
for a point $x$ to be a local minimizer for \eqref{eq:GenProb} is that $x\in\zer\partial\varphi$ \cite[Thm.~10.1]{rockafellar2011variational}.
If $\varphi$ is convex (for example when $f$ is convex) then the condition is also sufficient, and $x$ is a global minimizer.
%The set of global minimizers of \eqref{eq:GenProb} is indicated as $\argmin\varphi$,
%and the corresponding optimal objective value is denoted as $\varphi_\star = \varphi(x_\star)$ for any $x_\star\in \argmin\varphi$.
%Clearly $\argmin\varphi \subseteq \zer\partial\varphi$ with equality holding, for example, when
%$f$ is convex.

Given $g\in\Gamma_0(\Re^n)$,
its \emph{proximal mapping} is defined by
\begin{equation}\label{eq:prox}
	\prox_{\gamma g}(x)
{}={}
	\argmin_{u\in \OurSpace}{
		\set{}{g(u)+\tfrac{1}{2\gamma}\|u-x\|^2}
	},
\end{equation}
cf.~\cite{moreau1965proximiteet}.
The proximal mapping is a generalized projection, in the sense that if \(g=\delta_C\) is the \emph{indicator function} of a nonempty closed convex set \(C\subseteq\Re^n\), \ie, \(g(x)=0\) for \(x\in C\) and \(+\infty\) otherwise, then \(\prox_{\gamma g}=\proj{C}\) is the projection on \(C\) for any \(\gamma>0\).
The value function of the optimization problem~\eqref{eq:prox} defining the
proximal mapping is called the \emph{Moreau envelope} and is denoted by
$g^\gamma$, \ie,
\begin{equation}\label{eq:MoreauEnv}
g^{\gamma}(x) = \min_{u\in \OurSpace}\set{}{g(u)+\tfrac{1}{2\gamma}\|u-x\|^2}.
\end{equation}
Properties of the Moreau envelope and the proximal mapping are well documented
in the literature \cite{bauschke2011convex,rockafellar2011variational,
combettes2005signal,combettes2011proximal}. For example, the proximal mapping is
single-valued, continuous and nonexpansive (Lipschitz continuous with Lipschitz constant
$1$) and the envelope function $g^{\gamma}$ is convex, continuously
differentiable, with gradient
\begin{equation}\label{eq:nabla_e}
\nabla g^{\gamma}(x) = \gamma^{-1}(x-\prox_{\gamma g}(x)),
\end{equation}
which is $\gamma^{-1}$-Lipschitz continuous.

We will consider cases where $g$ is \emph{twice epi-differentiable} \cite[Def.~13.6]{rockafellar2011variational},
and indicate with $\twiceepi[v]{g}{x}$ the second-order epi-derivative of $g$ at $x$ for $v$.

For a mapping $F:\Re^n\to\Re^m$ we will indicate by $\Bjac{F}{x}$ and $\jac{F}{x}$, respectively, its semiderivative
and Jacobian at $x$, when these exist. The directional derivative of $F$ at $x$ along a direction $d$ will then
be denoted as $\Bjac{F}{x}[d]$ if $F$ is semidifferentiable at $x$, and as $\jac{F}{x}[d] = \jac{F}{x} d$ if $F$ is differentiable at $x$.
For the basic notions about semidifferentiability, and its link with ordinary differentiability, we refer the reader to \Cref{APP:Definitions} and the references therein.

		\subsection{The forward-backward splitting}
			In the rest of the paper we will work under the following
\begin{ass}\label{Ass:fg}
\(\varphi=f+g\) with \(f\in \cont^{1,1}_{L_f}(\Re^n)\) for some \(L_f>0\) and \(g\in\Gamma_0(\Re^n)\).
\end{ass}
If \(f\) satisfies \Cref{Ass:fg} then \cite[Prop. A.24]{bertsekas1999nonlinear}
\begin{equation}\label{Eq:DescentLemma}
	f(y)
{}\leq{}
	f(x)
	{}+{}
	\innprod{\nabla f(x)}{y-x}
	{}+{}
	\tfrac{L_f}{2}
	\|y-x\|^2.
\end{equation}
Given an initial point $x^0$ and $\gamma>0$, forward-backward splitting (also known as proximal gradient method) seeks solutions to the problem \eqref{eq:GenProb} by means of the following iterations:
\begin{equation}\label{eq:FBS}
x^{k+1} = \prox_{\gamma g}(x^k-\gamma\nabla f(x^k)).
\end{equation}
Under \Cref{Ass:fg} the generated sequence \(\seq{x^k}\) satisfies \cite[eq. (2.13)]{nesterov2013gradient}
\[
	\varphi(x^{k+1})-\varphi(x^k)
{}\leq{}
	-\tfrac{2-\gamma L_f}{2\gamma}
	\|x^{k+1}-x^k\|^2.
\]
If $\gamma\in(0,2/L_f)$ and \(\varphi\) is lower bounded, it can be easily inferred that any cluster point \(x\) is stationary for \(\varphi\), in the sense that it satisfies the necessary condition for optimality \(x\in\zer\partial\varphi\).
The existence of cluster points is ensured if \(\seq{x^k}\) remains bounded; due to the monotonic behavior of \(\seq{\varphi(x^k)}\) for \(\gamma\) in the given range, this condition in turn is guaranteed if \(\varphi\) and the initial point \(x^0\) satisfy the following requirement, which is a standard assumption for nonconvex problems (see e.g. \cite{nesterov2013gradient}).
\begin{ass}\label{Ass:LevelSets}
The level set \(\set{x\in\Re^n}{\varphi(x)\leq\varphi(x^0)}\), which for conciseness we shall denote \(\set{}{\varphi\leq\varphi(x^0)}\), is bounded.
In particular, there exists $R>0$ such that
\(
	\|x-z\|
{}\leq{}
	R
\)
for all $x\in\set{}{\varphi\leq\varphi(x^0)}$ and $z\in\argmin\varphi$.
\end{ass}
The existence of such a uniform radius $R$ is due to boundedness of $\argmin\varphi$, which in turn follows from the assumed boundedness of $\set{}{\varphi\leq\varphi(x^0)}$.

\begin{es}
To see that \(\argmin\varphi\neq\emptyset\) is not enough for preventing the generation of unbounded sequences, consider \(\varphi=f+g:\Re\to\Rinf\) where
\[
	g=\delta_{(-\infty,2]}
\quad\text{and}\quad
	f(x)
{}={}
	\begin{cases}
		\exp(x)-1 & \text{if }x<0, \\
		x-x^2 & \text{if }x\geq 0.
	\end{cases}
\]
\Cref{Ass:fg} is satisfied with \(L_f=2\) and \(\argmin\varphi=\set{}{2}\).
However, for any \(\gamma\in(0,1)\) the sequence \(\seq{x^k}\) generated by \eqref{eq:FBS} with \(x^0<1/2\) diverges to \(-\infty\), and \(\varphi(x^k)\to -1>-2=\min\varphi\).
This however cannot happen in the convex case \cite[Thm. 25.8]{bauschke2011convex}.
\end{es}

We use shorthands to denote the forward-backward mapping and the associated \emph{fixed-point residual}
in order to simplify the notation:
\begin{align}\label{eq:Shorthands}
T_\gamma(x) &= \prox_{\gamma g}(x-\gamma\nabla f(x)), \\
R_\gamma(x) &= \gamma^{-1}(x-T_\gamma(x)),
\end{align}
so that iteration~\eqref{eq:FBS} can be written as $x^{k+1} = T_\gamma(x^k) = x^k - \gamma R_\gamma(x^k)$.
The set $\zer\partial\varphi$ is easily characterized in terms of the fixed-point set of $T_\gamma$ as follows:
%from the optimality conditions for \eqref{eq:ProxGradProblem} we see that
%$$ z=T_\gamma(x) \Longleftrightarrow -\nabla f(x)-\gamma^{-1}(z-x) \in \partial g(z).$$
%Setting $z=x$ shows that
\begin{equation}
	x=T_\gamma(x)
{}\Longleftrightarrow{}
	x\in\zer\partial\varphi.\label{eq:StatCond}
\end{equation}

Note that $T_\gamma(x)$ can alternatively be expressed as the solution to the following
partially linearized subproblem (see also \Cref{fig:FBS}):
\begin{subequations}
\begin{align}
	T_\gamma(x)
{}={} &
	\argmin_{u\in \OurSpace}{
		\set{}{\ell_\varphi(u, x) + \tfrac{1}{2\gamma}\|u-x\|^2},
	}
\label{eq:ProxGradProblem}
\\
	\ell_\varphi(u, x)
{}={}&
	f(x) + \innprod{\nabla f(x)}{u-x} + g(u).
\label{eq:Linearization}
\end{align}
\end{subequations}

	\section{Forward-backward envelope}
		\label{SEC:FBE}
		We now proceed to the reformulation of~\eqref{eq:GenProb} as the
minimization of an unconstrained continuously differentiable function.
To this end, we consider the value function of problem~\eqref{eq:ProxGradProblem} defining the forward-backward mapping $T_\gamma$
and give the following definition.
\begin{defin}[Forward-backward envelope]\label{def:FBE}
Let \(f,g\) and \(\varphi\) be as in \Cref{Ass:fg}, and let $\gamma>0$.
The forward-backward envelope (FBE) of $\varphi$ with parameter $\gamma$ is
\begin{equation}
	\varphi_\gamma(x)
{}={}
	\min_{u\in \OurSpace}{
		\set{}{\ell_\varphi(u, x) + \tfrac{1}{2\gamma}\|u-x\|^2}.
	}
\label{eq:FBEmin}
\end{equation}
\end{defin}
Using \eqref{eq:ProxGradProblem} and \eqref{eq:Linearization} it is easy to verify that \eqref{eq:FBEmin} can be equivalently expressed as
\begin{align}
	\varphi_\gamma(x)
{}={} &
	f(x)
	{}+{}
	g(T_{\gamma}(x))
	{}-{}
	\gamma\innprod{\nabla f(x)}{R_\gamma(x)}
	{}+{}
	\tfrac{\gamma}{2}\|R_\gamma(x)\|^2\label{eq:FBEexp}
\intertext{or, by the definition of Moreau envelope, as}
	\varphi_\gamma(x)
{}={} &
	f(x)
	{}-{}
	\tfrac{\gamma}{2}\|\nabla f(x)\|^2
	{}+{}
	g^{\gamma}(x-\gamma\nabla f(x)).\label{eq:EquivFBE}
\end{align}
The geometrical construction of $\varphi_\gamma$ is depicted in \Cref{fig:FBE}.
One distinctive feature of $\varphi_\gamma$ is the fact that it is real-valued, despite the fact that $\varphi$
can be extended-real-valued.
Function $\varphi_\gamma$ has other favorable properties which we now summarize.

\begin{figure}[tb]
	\includegraphics{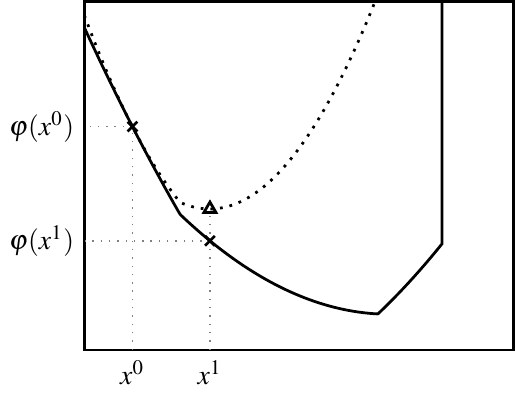}
	\hfill
	\includegraphics{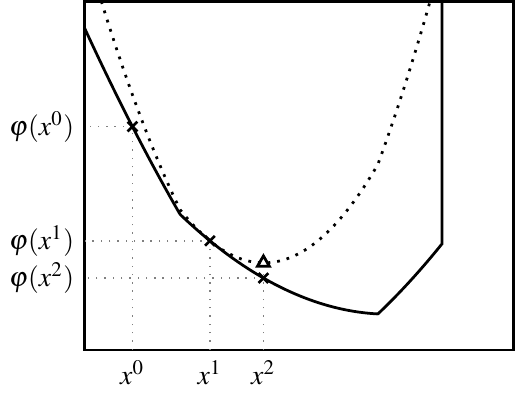}
	\caption{When $\gamma$ is small enough forward-backward splitting minimizes, at every step,
	a convex majorization (dotted lines) of the original cost $\varphi$ (solid line),
	cf.~\eqref{eq:ProxGradProblem}.}
	\label{fig:FBS}
\end{figure}

\begin{figure}[tb]
	\includegraphics{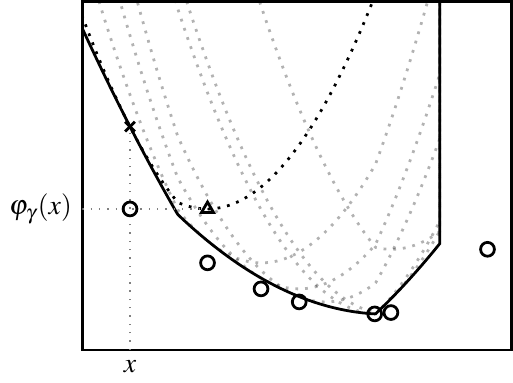}
	\hfill
	\includegraphics{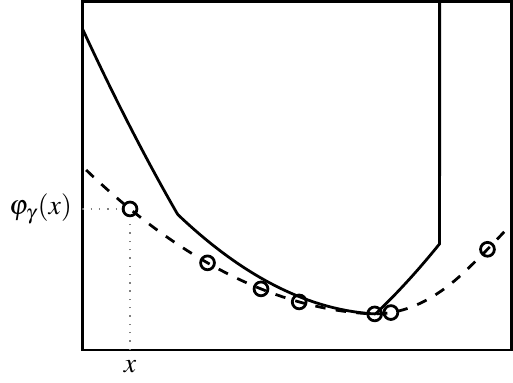}
	\caption{The forward-backward envelope $\varphi_\gamma$ (dashed line) is obtained by considering the optimal values of problems
    	\eqref{eq:ProxGradProblem} (dotted lines), and serves as a real-valued lower bound for the original objective $\varphi$ (solid line).}
    \label{fig:FBE}
\end{figure}

		\subsection{Basic inequalities}
			The following result states the fundamental inequalities relating $\varphi_\gamma$
to $\varphi$.

\begin{prop}\label{prop:BoundsFBE}
Suppose \Cref{Ass:fg} is satisfied.
Then, for all \(x\in\Re^n\)
\begin{enumprop}%[{label=\rm (\roman*)}]
	\item\label{prop:UppBnd}
		\(
			\displaystyle
			\varphi_\gamma(x)\leq \varphi(x)-\tfrac{\gamma}{2}\|R_\gamma(x)\|^2
		\)~
		for all $\gamma>0$;
	\item\label{prop:LowBnd}
		\(
			\displaystyle
			\varphi(T_\gamma(x)) \leq \varphi_\gamma(x)-\tfrac{\gamma}{2}
			\left(1-{\gamma}L_f\right)\|R_\gamma(x)\|^2
		\)~
		for all $\gamma>0$;
	\item\label{prop:LowBnd4Gamma}
		\(
			\displaystyle
			\varphi(T_\gamma(x))\leq \varphi_\gamma(x)
		\)~
		for all $\gamma\in(0,1/L_f]$.
\end{enumprop}
\end{prop}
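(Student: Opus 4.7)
My plan is to work directly from the explicit expression
\[
\varphi_\gamma(x) = f(x) + g(T_\gamma(x)) - \gamma\innprod{\nabla f(x)}{R_\gamma(x)} + \tfrac{\gamma}{2}\|R_\gamma(x)\|^2
\]
given in \eqref{eq:FBEexp}, which follows from \eqref{eq:FBEmin} by evaluating the strongly convex objective at its minimizer $T_\gamma(x)$ and using $T_\gamma(x)-x = -\gamma R_\gamma(x)$. All three inequalities will be obtained by comparing this expression to $\varphi(T_\gamma(x))$ or $\varphi(x)$ using, respectively, the descent lemma and the subgradient inequality for the convex function $g$.

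For part \ref{prop:UppBnd}, the naive bound from evaluating the quadratic model at $u=x$ only yields $\varphi_\gamma(x)\leq\varphi(x)$, so we need the sharper estimate produced by convexity of $g$. The first-order optimality condition for the prox problem defining $T_\gamma(x)$ reads $R_\gamma(x)-\nabla f(x)\in\partial g(T_\gamma(x))$, hence by the subgradient inequality for $g\in\Gamma_0(\OurSpace)$,
\[
g(x) \geq g(T_\gamma(x)) + \innprod{R_\gamma(x)-\nabla f(x)}{x - T_\gamma(x)} = g(T_\gamma(x)) + \gamma\|R_\gamma(x)\|^2 - \gamma\innprod{\nabla f(x)}{R_\gamma(x)}.
\]
Substituting the resulting upper bound on $g(T_\gamma(x)) - \gamma\innprod{\nabla f(x)}{R_\gamma(x)}$ into \eqref{eq:FBEexp} yields $\varphi_\gamma(x)\leq f(x)+g(x) - \tfrac{\gamma}{2}\|R_\gamma(x)\|^2$, as required.

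For part \ref{prop:LowBnd}, I would apply the descent lemma \eqref{Eq:DescentLemma} with $y=T_\gamma(x)$, giving
\[
f(T_\gamma(x)) \leq f(x) - \gamma\innprod{\nabla f(x)}{R_\gamma(x)} + \tfrac{\gamma^2 L_f}{2}\|R_\gamma(x)\|^2.
\]
Adding $g(T_\gamma(x))$ to both sides and rewriting the right-hand side using \eqref{eq:FBEexp} produces
\[
\varphi(T_\gamma(x)) \leq \varphi_\gamma(x) - \tfrac{\gamma}{2}(1-\gamma L_f)\|R_\gamma(x)\|^2,
\]
as desired. Part \ref{prop:LowBnd4Gamma} then follows immediately from part \ref{prop:LowBnd} by noting that the factor $1-\gamma L_f$ is nonnegative whenever $\gamma\in(0,1/L_f]$, so the correction term is nonpositive.

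The only mild obstacle is recognizing, in part \ref{prop:UppBnd}, that the tighter bound requires the subgradient inequality at $T_\gamma(x)$ rather than a direct plug-in into the quadratic model; beyond that, the remaining manipulations are routine and rely only on \eqref{eq:FBEexp}, convexity of $g$, and the descent lemma.
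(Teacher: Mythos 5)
Your proposal is correct and follows essentially the same route as the paper: part (i) via the prox optimality condition $R_\gamma(x)-\nabla f(x)\in\partial g(T_\gamma(x))$ and the subgradient inequality, part (ii) via the descent lemma at $y=T_\gamma(x)$ combined with \eqref{eq:FBEexp}, and part (iii) as an immediate consequence. No gaps.
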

\begin{proof}
Regarding \ref{prop:UppBnd}, from the optimality condition for \eqref{eq:ProxGradProblem} we have
$$ R_\gamma(x)-\nabla f(x)\in\partial g(T_\gamma(x)), $$
\ie, $R_\gamma(x)-\nabla f(x)$ is a subgradient of $g$ at $T_\gamma(x)$.
From subgradient inequality
\begin{align*}
g(x) &\geq g(T_\gamma(x))+\innprod{R_\gamma(x)-\nabla f(x)}{x-T_\gamma(x)}\\
 &= g(T_\gamma(x)) - \gamma\innprod{\nabla f(x)}{R_\gamma(x)} + \gamma\|R_\gamma(x)\|^2.
\end{align*}
Adding $f(x)$ to both sides and considering \eqref{eq:FBEexp} proves the claim. For \ref{prop:LowBnd}, we have
\begin{align*}
\varphi_\gamma (x) &= f(x) + \gamma\innprod{\nabla f(x)}{R_\gamma(x)} + g(T_{\gamma}(x)) + \tfrac{\gamma}{2}\|R_\gamma(x)\|^2\\
 &\geq f(T_\gamma(x)) + g(T_{\gamma}(x)) - \tfrac{L_f}{2}\|T_{\gamma}(x)-x\|^2 + \tfrac{\gamma}{2}\|R_\gamma(x)\|^2.
\end{align*}
where the inequality follows by \eqref{Eq:DescentLemma}.
\ref{prop:LowBnd4Gamma} then trivially follows.
\end{proof}

\begin{figure}[tb]
	\includegraphics{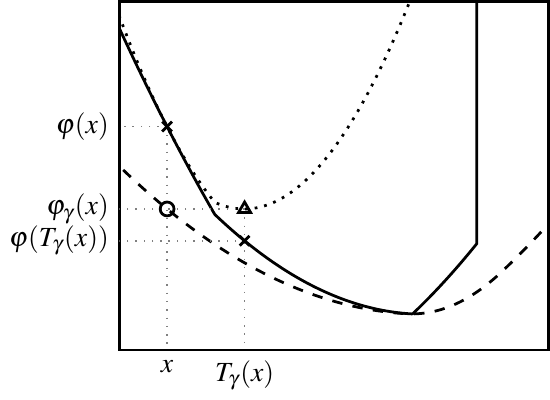}
	\hfill
	\includegraphics{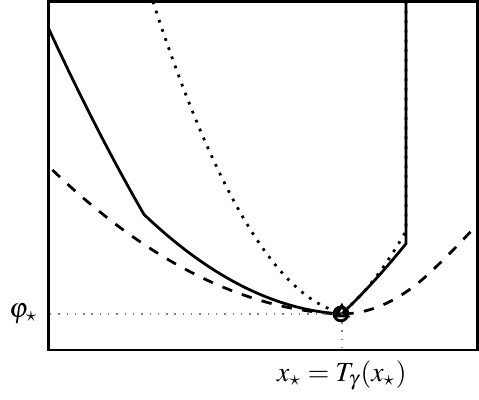}
	\caption{Left: by \Cref{prop:BoundsFBE}, $\varphi_\gamma(x)$ is upper bounded by $\varphi(x)$ and, when $\gamma$ is small enough, lower bounded by $\varphi(T_\gamma(x))$.
		Right: by \Cref{prop:FBEzer}, the two bounds coincide in correspondence of critical points.}
	\label{fig:FBEbounds}
\end{figure}

A consequence of \Cref{prop:BoundsFBE} is that, whenever $\gamma$ is
small enough, the problems of minimizing $\varphi$ and $\varphi_\gamma$ are equivalent.

\begin{prop}\label{prop:EquivFBE}
Suppose \Cref{Ass:fg} is satisfied.
Then,
\begin{enumprop}
	\item\label{prop:FBEzer}
		\(\varphi(z)=\varphi_\gamma(z)\)~ for all \(\gamma>0\) and \(z\in\zer\partial\varphi\);
	\item\label{prop:FBEinf}
		\(\inf\varphi=\inf\varphi_\gamma\)~ and ~\(\argmin \varphi\subseteq\argmin\varphi_\gamma\)~ for \(\gamma\in(0,1/L_f]\);
	\item\label{prop:FBEargmin}
		\(\argmin\varphi=\argmin\varphi_\gamma\)~ for all \(\gamma\in(0,1/L_f)\).
\end{enumprop}
\begin{proof}
\ref{prop:FBEzer} follows from \eqref{eq:StatCond}, \Cref{{prop:UppBnd},{prop:LowBnd}}.
In particular, this holds for $x_\star\in\argmin\varphi$.
Suppose now \(\gamma\in(0,1/L_f]\), then
$$\varphi_\gamma(x_\star) = \varphi(x_\star) \leq \varphi(T_\gamma(x)) \leq \varphi_\gamma(x) \qquad\text{for all } x \in \Re^n$$
where the first inequality follows from optimality of $x_\star$ for $\varphi$, and the second
from \Cref{prop:LowBnd4Gamma}.
Therefore, every $x_\star\in \argmin\varphi$ is also a minimizer of $\varphi_\gamma$, and $\min\varphi = \min\varphi_\gamma$ if the former is attained, which proves \ref{prop:FBEinf}.

Suppose now \(\gamma\in(0,1/L_f)\), and let $x_\star \in\argmin \varphi_\gamma$.
From \Cref{{prop:UppBnd},{prop:LowBnd}}
we get that
$$ \varphi_\gamma(T_\gamma(x_\star)) \leq \varphi(T_\gamma(x_\star)) \leq \varphi_\gamma(x_\star) - \tfrac{1-\gamma L_f}{2}\|x_\star - T_\gamma(x_\star)\|^2, $$
which implies $x_\star = T_\gamma(x_\star)$, since $x_\star$ minimizes $\varphi_\gamma$ and \(\tfrac{1-\gamma L_f}{2}>0\).
Therefore, the following chain of inequalities holds
$$ \varphi_\gamma(x_\star) = \varphi_\gamma(T_\gamma(x_\star)) \leq \varphi(x_\star) \leq \varphi_\gamma(x_\star). $$
Since $\varphi_\gamma \leq \varphi$ and $x_\star$ minimizes $\varphi_\gamma$, it follows that $x_\star\in\argmin \varphi$.
Therefore, the sets of minimizers of $\varphi$ and $\varphi_\gamma$ coincide, as well as the minimum values provided either one of the two functions attains it.

To conclude, it remains to show the case \(\argmin\varphi=\emptyset\) which, as just proven, corresponds to \(\argmin\varphi_\gamma=\emptyset\).
From \Cref{prop:UppBnd} we have \(\inf\varphi_\gamma\leq\inf\varphi\).
Suppose that the inequality is strict, \ie, that there exists \(x\in\Re^n\) such that \(\varphi_\gamma(x)\leq\inf\varphi\).
Then, \Cref{prop:LowBnd} would imply \(\varphi(T_\gamma(x))\leq\inf\varphi\), which contradicts \(\argmin\varphi=\emptyset\), and \ref{prop:FBEargmin} follows.
\end{proof}
\end{prop}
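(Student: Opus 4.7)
The plan is to derive all three statements directly from \Cref{prop:BoundsFBE} together with the characterization \eqref{eq:StatCond} of critical points as fixed points of $T_\gamma$, without invoking any further machinery. The general strategy is to interpret the three parts as a progression: item \ref{prop:FBEzer} says the two functions agree on $\zer\partial\varphi$; item \ref{prop:FBEinf} lifts this to an identity between infima and a one-way inclusion of minimizers; item \ref{prop:FBEargmin} reverses the inclusion under the stronger condition $\gamma<1/L_f$, where \Cref{prop:LowBnd} becomes a strict descent inequality.

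For \ref{prop:FBEzer}, the key observation is that $z\in\zer\partial\varphi$ forces $T_\gamma(z)=z$ by \eqref{eq:StatCond}, hence $R_\gamma(z)=0$. Substituting this into \Cref{prop:UppBnd} gives $\varphi_\gamma(z)\leq\varphi(z)$, while \Cref{prop:LowBnd} collapses to $\varphi(z)=\varphi(T_\gamma(z))\leq\varphi_\gamma(z)$; combining these yields equality. For \ref{prop:FBEinf}, I would sandwich $\inf\varphi_\gamma$ between $\inf\varphi$ and itself: \Cref{prop:UppBnd} gives $\inf\varphi_\gamma\leq\inf\varphi$, while for $\gamma\in(0,1/L_f]$, \Cref{prop:LowBnd4Gamma} gives $\inf\varphi\leq\inf_x\varphi(T_\gamma(x))\leq\inf\varphi_\gamma$. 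The argmin inclusion then follows because any $x_\star\in\argmin\varphi$ is in particular a critical point (by \cite[Thm.~10.1]{rockafellar2011variational}), so \ref{prop:FBEzer} gives $\varphi_\gamma(x_\star)=\varphi(x_\star)=\inf\varphi=\inf\varphi_\gamma$.

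The most delicate part is \ref{prop:FBEargmin}, where I must upgrade the previous inclusion to an equality. Given $x_\star\in\argmin\varphi_\gamma$ with $\gamma\in(0,1/L_f)$, I would chain \Cref{prop:UppBnd} at $T_\gamma(x_\star)$ with \Cref{prop:LowBnd} at $x_\star$ to obtain
\[
	\varphi_\gamma(T_\gamma(x_\star))
{}\leq{}
	\varphi(T_\gamma(x_\star))
{}\leq{}
	\varphi_\gamma(x_\star)-\tfrac{\gamma(1-\gamma L_f)}{2}\|R_\gamma(x_\star)\|^2,
\]
and since $x_\star$ minimizes $\varphi_\gamma$ and $1-\gamma L_f>0$, this forces $R_\gamma(x_\star)=0$, so $x_\star\in\zer\partial\varphi$ by \eqref{eq:StatCond}. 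Then \ref{prop:FBEzer} gives $\varphi(x_\star)=\varphi_\gamma(x_\star)=\inf\varphi_\gamma=\inf\varphi$ (using \ref{prop:FBEinf}), so $x_\star\in\argmin\varphi$.

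The main obstacle I anticipate is the edge case $\argmin\varphi=\emptyset$ for \ref{prop:FBEargmin}: a priori $\argmin\varphi_\gamma$ could be nonempty while $\argmin\varphi$ is empty, which would break the equality. I would handle this by contradiction: if there were $x$ with $\varphi_\gamma(x)<\inf\varphi$, then \Cref{prop:LowBnd4Gamma} would yield $\varphi(T_\gamma(x))\leq\varphi_\gamma(x)<\inf\varphi$, contradicting the definition of the infimum. So $\inf\varphi_\gamma=\inf\varphi$ unconditionally in the range $\gamma\in(0,1/L_f]$, and the previous argument shows that a minimizer of $\varphi_\gamma$ (when one exists) must in fact be a minimizer of $\varphi$, proving that the two argmin sets are simultaneously empty when $\gamma<1/L_f$.
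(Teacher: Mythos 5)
Your proposal is correct and follows essentially the same route as the paper: all three items rest on \Cref{{prop:UppBnd},{prop:LowBnd},{prop:LowBnd4Gamma}} together with \eqref{eq:StatCond}, and your treatment of \ref{prop:FBEargmin} (forcing $R_\gamma(x_\star)=0$ for any minimizer of $\varphi_\gamma$ via the chained inequalities, then handling the empty-argmin case by contradiction through \Cref{prop:LowBnd4Gamma}) is the paper's argument almost verbatim. The only cosmetic difference is in \ref{prop:FBEinf}, where you establish $\inf\varphi=\inf\varphi_\gamma$ directly by sandwiching infima and then deduce the argmin inclusion from criticality of minimizers plus \ref{prop:FBEzer}, whereas the paper argues pointwise ($\varphi_\gamma(x_\star)=\varphi(x_\star)\leq\varphi(T_\gamma(x))\leq\varphi_\gamma(x)$) and defers the non-attained case to the end; both are valid and use the same inequalities.
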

\begin{es}
To see that the bounds on \(\gamma\) in \Cref{prop:EquivFBE} are tight, consider the convex problem
\[
	\minimize_{x\in\Re^n}{
		\varphi(x)
	{}\equiv{}
		\smash{
			\overbracket[0.5pt]{\tfrac12\|x\|^2}^{f(x)}
			{}+{}
			\overbracket[0.5pt]{\vphantom{\tfrac12}\delta_{\Re_+^n}(x)}^{g(x)}
		}
	}
\]
where \(\Re_+^n=\set{x\in\Re^n}{x_i\geq 0, i=1\ldots n}\) is the nonnegative orthant.
\Cref{Ass:fg} is satisfied with \(L_f=1\), and the only stationary point for \(\varphi\) is the unique minimizer \(x_\star=0\).
Using \eqref{eq:EquivFBE} we can explicitly compute the FBE: for any \(\gamma>0\) we have
\[
	\varphi_\gamma(x)
{}={}
	\tfrac{1-\gamma}{2}\|x\|^2
	{}+{}
	\tfrac{1}{2\gamma}
	\bigl\|
		(1-\gamma)x
		{}-{}
		[(1-\gamma)x]_+
	\bigr\|^2,
\]
where \([x]_+=\Pi_{\Re_+^n}(x)=\max\set{}{x,0}\), the last expression being meant componentwise.
For any \(\gamma>0\) we have that \(\varphi_\gamma(x_\star)=\varphi(x_\star)\), as ensured by \Cref{prop:FBEzer}, and as long as \(\gamma<1=1/L_f\) all properties in \Cref{prop:EquivFBE} do hold.
For \(\gamma=1\) we have that \(\varphi_\gamma\equiv 0\), showing the inclusion in \Cref{prop:FBEinf} to be proper, yet satisfying \(\min\varphi_\gamma=\min\varphi\).

However, for \(\gamma>1\) the FBE \(\varphi_\gamma\) is not even lower bounded, as it can be easily deduced by observing that, letting \(x^k=(-k,0\ldots0)\) for \(k\in\Nn\), \(\varphi_\gamma(x^k)=\frac{1-\gamma}{2}k^2\) is arbitrarily negative.
\qed
\end{es}

\Cref{prop:EquivFBE} implies, using \Cref{prop:UppBnd}, that an $\varepsilon$-optimal solution $x$ of $\varphi$ is automatically $\varepsilon$-optimal for $\varphi_\gamma$
and, using \Cref{prop:LowBnd}, from an $\varepsilon$-optimal for $\varphi_\gamma$ we can directly obtain an $\varepsilon$-optimal solution
for $\varphi$ if $\gamma\in(0,1/L_f]$:
\begin{align*}
	\varphi(x)-\inf\varphi
{}\leq{} &
	\varepsilon\implies \varphi_\gamma(x)-\inf\varphi\leq\varepsilon
\\
	\varphi_\gamma(x)-\inf\varphi_\gamma
{}\leq{} &
	\varepsilon\implies \varphi(T_\gamma(x))-\inf\varphi\leq\varepsilon
\end{align*}
\Cref{prop:EquivFBE} also highlights the first apparent similarity between the concepts of FBE and Moreau envelope \eqref{eq:MoreauEnv}:
the latter is indeed itself a lower bound for the original function, sharing with it its minimizers and minimum value.
In fact, the two are directly related as we now show.
In particular, the following result implies that if $\varphi$ is convex (e.g. if $f$ is) and $\gamma\in(0, 1/L_f)$, then the possibly nonconvex $\varphi_\gamma$ is upper and lower bounded by convex functions.

\begin{prop}\label{prop:FBEvsMoreau}
Suppose \Cref{Ass:fg} is satisfied.
Then,
\begin{enumprop}
	\item\label{it:bndUp}
		$\displaystyle\varphi_\gamma\leq\varphi^{\frac{\gamma}{1+\gamma L_f}}$~ for all \(\gamma>0\);
	\item\label{it:bndLo}
		$\displaystyle\varphi^{\frac{\gamma}{1-\gamma L_f}}\leq\varphi_\gamma$~ for all $\gamma\in(0, 1/L_f)$;
	\item\label{it:bndUpCvx}
		$\displaystyle\varphi_\gamma\leq\varphi^\gamma$~ if $f$ is convex.
\end{enumprop}
\end{prop}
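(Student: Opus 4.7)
The plan is to express both $\varphi_\gamma(x)$ and $\varphi^\alpha(x)$ as minimizations over a common variable $u$, and then bound the integrand of the FBE against the integrand of the Moreau envelope pointwise in $u$. The FBE has the linearization $f(x)+\innprod{\nabla f(x)}{u-x}$ plus $g(u)+\tfrac{1}{2\gamma}\|u-x\|^2$, whereas $\varphi^\alpha$ has $f(u)+g(u)+\tfrac{1}{2\alpha}\|u-x\|^2$. So everything reduces to comparing $f(x)+\innprod{\nabla f(x)}{u-x}$ with $f(u)$ and absorbing the discrepancy into the quadratic penalty, which changes the effective proximal parameter.

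The main tool is the two-sided form of the descent lemma for $L_f$-smooth functions: under \Cref{Ass:fg} one has
\begin{equation*}
	\bigl| f(u) - f(x) - \innprod{\nabla f(x)}{u-x} \bigr|
	\leq
	\tfrac{L_f}{2}\|u-x\|^2
	\quad\text{for all } u,x\in\Re^n,
\end{equation*}
which is the standard symmetric strengthening of \eqref{Eq:DescentLemma} (one direction is \eqref{Eq:DescentLemma} itself; the reverse follows from the same integration-of-the-gradient argument that proves \cite[Prop.~A.24]{bertsekas1999nonlinear}).

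For \ref{it:bndUp}, I would use the upper bound $f(x)+\innprod{\nabla f(x)}{u-x}\leq f(u)+\tfrac{L_f}{2}\|u-x\|^2$ pointwise in $u$ inside the minimization \eqref{eq:FBEmin}, which collects the quadratic terms into $\tfrac{1+\gamma L_f}{2\gamma}\|u-x\|^2$ and yields exactly $\varphi^{\gamma/(1+\gamma L_f)}(x)$. For \ref{it:bndLo} I would do the symmetric manipulation: use the lower bound $f(x)+\innprod{\nabla f(x)}{u-x}\geq f(u)-\tfrac{L_f}{2}\|u-x\|^2$, giving $\tfrac{1-\gamma L_f}{2\gamma}\|u-x\|^2$ as the residual quadratic, whence $\varphi_\gamma(x)\geq\varphi^{\gamma/(1-\gamma L_f)}(x)$; the restriction $\gamma<1/L_f$ is exactly what is needed to keep the quadratic coefficient positive and to make the resulting Moreau envelope well-defined. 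For \ref{it:bndUpCvx}, convexity of $f$ gives $f(x)+\innprod{\nabla f(x)}{u-x}\leq f(u)$ with no quadratic correction, so inserting into \eqref{eq:FBEmin} directly yields $\varphi^\gamma(x)$.

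There is no real obstacle here: everything is pointwise substitution inside the minimization in $u$, followed by identifying the resulting expression as a Moreau envelope with the appropriate parameter. The only mild care needed is to track the sign of the quadratic coefficient in \ref{it:bndLo} (which forces $\gamma<1/L_f$) and to invoke the two-sided descent lemma, which is standard but worth stating explicitly at the top of the proof.
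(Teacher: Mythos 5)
Your proposal is correct and follows essentially the same route as the paper: both rest on the two-sided descent lemma $|\varphi(u)-\ell_\varphi(u,x)|\leq\tfrac{L_f}{2}\|u-x\|^2$ (strengthened to $\ell_\varphi(u,x)\leq\varphi(u)$ when $f$ is convex), applied pointwise inside the minimization \eqref{eq:FBEmin} and then reparametrized as a Moreau envelope. The directions of the inequalities and the resulting parameters $\gamma/(1\pm\gamma L_f)$ all check out.
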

\begin{proof}
\eqref{Eq:DescentLemma} implies the following bounds concerning
the partial linearization:
\[
-\tfrac{L_f}{2}\|u-x\|^2\leq\varphi(u)-\ell_\varphi(u,x)\leq\tfrac{L_f}{2}\|u-x\|^2.
\]
Combined with the definition of the FBE, cf.~\eqref{eq:FBEmin}, this proves \ref{it:bndUp} and \ref{it:bndLo}.

If $f$ is convex, the lower bound can be strengthened to
\(
0\leq\varphi(u)-\ell_\varphi(u,x).
\)
Adding $\tfrac{1}{2\gamma}\|u-x\|^2$ to both sides and minimizing with respect to $u$ yields \ref{it:bndUpCvx}.
\end{proof}

		\subsection{Differentiability}
			We now turn our attention to differentiability of $\varphi_\gamma$, which is fundamental in devising and analyzing algorithms
for solving \eqref{eq:GenProb}.
To ensure continuous differentiability of \(\varphi_\gamma\) we will need the following
\begin{ass}\label{Ass:fC2}
The function \(f\) is twice-continuously differentiable over \(\Re^n\).
\end{ass}
Under \Cref{Ass:fC2}, the function
\begin{equation}\label{Eq:Q}
	Q_\gamma:\Re^n\to\Re^{n\times n}
\qquad\text{given by}\qquad
	Q_\gamma(x)
{}={}
	I-\gamma\nabla^2 f(x)
\end{equation}
is well defined, continuous, and symmetric-valued.
\begin{thm}[Differentiability of $\varphi_\gamma$]\label{thm:DerPen}
Suppose that \Cref{{Ass:fg},{Ass:fC2}} are satisfied.
Then, $\varphi_\gamma$ is continuously differentiable with
\begin{equation}\label{eq:DerPen}
\nabla \varphi_\gamma(x)=Q_\gamma(x)R_\gamma(x).
\end{equation}
If $\gamma\in (0,1/L_f)$ then the set of stationary points of $\varphi_\gamma$ equals $\zer\partial\varphi$.
\end{thm}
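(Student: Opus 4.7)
The plan is to differentiate the explicit representation \eqref{eq:EquivFBE},
\[
\varphi_\gamma(x) = f(x) - \tfrac{\gamma}{2}\|\nabla f(x)\|^2 + g^\gamma\bigl(x-\gamma\nabla f(x)\bigr),
\]
term by term, and then read off the consequences for stationary points. This is more direct than trying to extract the gradient from the implicit definition \eqref{eq:FBEmin} via Danskin-type sensitivity arguments, and it makes the role of \Cref{Ass:fC2} transparent.

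First I would verify that each of the three summands is $\cont^1$ under \Cref{{Ass:fg},{Ass:fC2}}. Since $f\in\cont^2$, both $f$ and $x\mapsto-\tfrac{\gamma}{2}\|\nabla f(x)\|^2$ are $\cont^1$, with gradients $\nabla f(x)$ and $-\gamma\nabla^2 f(x)\nabla f(x)$ respectively. The inner map $h(x)=x-\gamma\nabla f(x)$ is $\cont^1$ with Jacobian $Q_\gamma(x)=I-\gamma\nabla^2 f(x)$, and by \eqref{eq:nabla_e} the Moreau envelope $g^\gamma$ is $\cont^1$ with $\nabla g^\gamma(y)=\gamma^{-1}(y-\prox_{\gamma g}(y))$. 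The chain rule, together with the symmetry of $Q_\gamma(x)$, gives
\[
\nabla\bigl[g^\gamma\circ h\bigr](x)
{}={}
Q_\gamma(x)\,\nabla g^\gamma(h(x))
{}={}
Q_\gamma(x)\bigl(R_\gamma(x)-\nabla f(x)\bigr),
\]
where the last equality uses that $\gamma^{-1}(x-\gamma\nabla f(x)-T_\gamma(x)) = R_\gamma(x)-\nabla f(x)$. Adding the three contributions and expanding $Q_\gamma(x)\nabla f(x)=\nabla f(x)-\gamma\nabla^2 f(x)\nabla f(x)$, one sees that the $\nabla f$ and $\gamma\nabla^2 f\,\nabla f$ terms cancel pairwise, leaving exactly $\nabla\varphi_\gamma(x)=Q_\gamma(x)R_\gamma(x)$. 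Continuity of this gradient is inherited from continuity of $\nabla^2 f$ and of $\prox_{\gamma g}$ (the latter is even nonexpansive).

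For the second claim, I would use that since $f\in\cont^{1,1}_{L_f}\cap\cont^2$ the operator norm bound $\|\nabla^2 f(x)\|\leq L_f$ holds pointwise (a standard consequence of $\nabla f$ being $L_f$-Lipschitz and differentiable). Hence for $\gamma\in(0,1/L_f)$ the symmetric matrix $Q_\gamma(x)=I-\gamma\nabla^2 f(x)$ has spectrum in $[1-\gamma L_f,\,1+\gamma L_f]\subset(0,2)$ and is nonsingular. Consequently $\nabla\varphi_\gamma(x)=Q_\gamma(x)R_\gamma(x)=0$ is equivalent to $R_\gamma(x)=0$, i.e.\ to $x=T_\gamma(x)$, which by \eqref{eq:StatCond} coincides with $x\in\zer\partial\varphi$.

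I do not anticipate a real obstacle: the whole proof is bookkeeping around the chain rule, and the only ingredient one must be careful with is the cancellation that collapses the sum into the clean form $Q_\gamma R_\gamma$. The mildly delicate point is simply to justify that $\gamma<1/L_f$ suffices for nonsingularity of $Q_\gamma$, which reduces to the Hessian bound $\|\nabla^2 f\|\leq L_f$ implied by $\nabla f$ being $L_f$-Lipschitz.
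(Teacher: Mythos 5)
Your proposal is correct and follows essentially the same route as the paper: differentiate the representation \eqref{eq:EquivFBE} term by term using \eqref{eq:nabla_e} and the chain rule, observe the cancellation that yields $\nabla\varphi_\gamma=Q_\gamma R_\gamma$, and then use nonsingularity of $Q_\gamma(x)$ for $\gamma\in(0,1/L_f)$ together with \eqref{eq:StatCond} to identify the stationary points. Your added justification that $\|\nabla^2 f(x)\|\leq L_f$ (hence $Q_\gamma(x)\succ 0$) is a welcome bit of explicitness that the paper leaves implicit.
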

\begin{proof}
Consider expression \eqref{eq:EquivFBE} for $\varphi_\gamma$.
%An equivalent expression for $\varphi_\gamma$ is the following:
%\begin{equation}\label{eq:EquivFBE}
%  \varphi_\gamma(x) = f(x)-\tfrac{\gamma}{2}||\nabla f(x)||^2+g^{\gamma}(x-\gamma\nabla f(x)).
%\end{equation}
The gradient of $g^{\gamma}$ is given by \eqref{eq:nabla_e}, and since $f\in\cont^2$ we have
\begin{align*}
\nabla \varphi_\gamma(x) &= \nabla f(x) - \gamma \nabla^2 f(x)\nabla f(x) + \gamma^{-1}\left(I-\gamma\nabla^2(x)\right)(x-\gamma\nabla f(x)-T_\gamma(x)) \\
&= \left(I-\gamma\nabla^2(x)\right)(\nabla f(x) - \nabla f(x) + \gamma^{-1}(x - T_\gamma(x))).
\end{align*}
This proves \eqref{eq:DerPen}.
If $\gamma\in(0,1/L_f)$ then $Q_\gamma(x)$ is nonsingular for all \(x\), and therefore $\nabla \varphi_\gamma(x) = 0$ if and only if $R_\gamma(x) = 0$, which means that $x$ is a critical point of $\varphi$ by \eqref{eq:StatCond}.
\end{proof}

Together with \Cref{prop:EquivFBE}, \Cref{thm:DerPen} shows that if $\gamma\in (0,1/L_f)$ the nonsmooth problem~\eqref{eq:GenProb} is completely equivalent to the unconstrained minimization of the continuously differentiable function $\varphi_\gamma$, in the sense that the sets of minimizers and optimal values are equal.
In particular, as remarked in the next statement, if \(\varphi\) is convex then the set of stationary points of \(\varphi_\gamma\) turns out to be equal to the set of its minimizers, hence of solutions to the problem, even though $\varphi_\gamma$ may not be convex.
\begin{cor}\label{Cor:Convex}
Suppose that \Cref{{Ass:fg},{Ass:fC2}} are satisfied.
If $\varphi$ is convex (e.g. if $f$ is), then $\argmin\varphi=\zer\nabla\varphi_\gamma$ for all $\gamma\in (0,1/L_f)$.
\end{cor}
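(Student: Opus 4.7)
The plan is to chain two identifications that are already in place. Convexity of $\varphi$ means $\varphi\in\Gamma_0(\Re^n)$, and the subgradient inequality $\varphi(y)\geq\varphi(x)+\innprod{v}{y-x}$ for $v\in\partial\varphi(x)$ shows that $0\in\partial\varphi(x)$ forces $\varphi(y)\geq\varphi(x)$ for all $y$; thus stationarity is both necessary and sufficient for minimality, giving $\argmin\varphi=\zer\partial\varphi$. On the other hand, \Cref{thm:DerPen} asserts that, for $\gamma\in(0,1/L_f)$, the matrix $Q_\gamma(x)=I-\gamma\nabla^2 f(x)$ is nonsingular, so from $\nabla\varphi_\gamma=Q_\gamma R_\gamma$ together with \eqref{eq:StatCond} one gets $\zer\nabla\varphi_\gamma=\zer\partial\varphi$. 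Chaining these two equalities yields the claimed identity $\argmin\varphi=\zer\partial\varphi=\zer\nabla\varphi_\gamma$.

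A second route, more in the spirit of the previous subsection, would be to invoke \Cref{prop:EquivFBE}\ref{prop:FBEargmin} to obtain $\argmin\varphi=\argmin\varphi_\gamma$, and then apply Fermat's rule for the smooth function $\varphi_\gamma$ (inclusion $\argmin\varphi_\gamma\subseteq\zer\nabla\varphi_\gamma$) combined with \Cref{thm:DerPen} to close the loop via $\zer\nabla\varphi_\gamma=\zer\partial\varphi=\argmin\varphi=\argmin\varphi_\gamma$. Both arguments rely only on results already proved, and they coincide in spirit.

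There is no real technical obstacle: the corollary is essentially a repackaging of \Cref{thm:DerPen} once convexity upgrades stationarity to global optimality. The parenthetical remark ``e.g.~if $f$ is'' is immediate since \Cref{Ass:fg} already includes $g\in\Gamma_0(\Re^n)$, so convexity of $f$ propagates to $\varphi=f+g$. Worth emphasising is that this identification holds despite $\varphi_\gamma$ itself being possibly nonconvex, since the argument only uses the smoothness of $\varphi_\gamma$ together with the nonsingularity of $Q_\gamma(x)$ in the given range of $\gamma$.
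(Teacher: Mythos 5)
Your argument is correct and coincides with the paper's (the corollary is stated without a separate proof, being an immediate consequence of \Cref{thm:DerPen} together with the fact, noted in the preliminaries, that for convex $\varphi$ stationarity is equivalent to global minimality). Your first route is exactly this chain $\argmin\varphi=\zer\partial\varphi=\zer\nabla\varphi_\gamma$, and the second route via \Cref{prop:EquivFBE} is a harmless variant.
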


		\subsection{Second-order properties}
			The FBE is not everywhere twice continuously differentiable in general.
For example, if $g$ is real valued then $g^\gamma\in\cont^2$ if and only if
$g\in\cont^2$ \cite{lemarechal1997practical}.
However, second order properties will only be needed at critical points of $\varphi$ in our framework, and for this purpose  we can rely on generalized second-order differentiability notions described in~\cite[Chapter 13]{rockafellar2011variational}.
\begin{ass}\label{Ass:GenQuad}
Function \(g\) is twice epi-differentiable at $x\in\zer\partial\varphi$ for $-\nabla f(x)$, with second order epi-derivative generalized quadratic.
That is,
\begin{equation}\label{eq:GenQuadSecondEpiDer}
	\twiceepi[-\nabla f(x)]{g}{x}[d] = \innprod{d}{Md} + \delta_S(d), \quad \forall d\in\Re^n
\end{equation}
where $S\subseteq \Re^n$ is a linear subspace, and $M\in\Re^{n\times n}$ is symmetric, positive semidefinite, and such that ${\rm Im}(M)\subseteq S$ and ${\rm Ker}(M)\supseteq S^\perp$.
\end{ass}
In some results we will need to assume the following slightly stronger property.
\begin{ass}\label{Ass:Strict2Epi}
Function \(g\) satisfies \Cref{Ass:GenQuad} at \(x\in\zer\partial\varphi\) and is strictly twice epi-dif\-fer\-en\-tiable at \(x\) for \(-\nabla f(x)\).
\end{ass}
The properties of $M$ in \Cref{Ass:GenQuad} cause no loss of generality.
Indeed, letting $\Pi_S$ denote the orthogonal projector onto the linear space $S$ (which is symmetric \cite{bernstein2009matrix}), if $M\succeq 0$ satisfies \eqref{eq:GenQuadSecondEpiDer}, then so does $M'=\Pi_S[\tfrac{1}{2}(M+M^\T)]\Pi_S$, which has the wanted properties.

Twice epi-differentiability of $g$ is a mild requirement, and cases where $\twiceepi{g}{}$ is actually generalized quadratic are abundant \cite{Rockafellar1988,Rockafellar1989,Poliquin1992,poliquin1995second}.
For example, if $g$ is piecewise linear and $x\in\zer\partial\varphi$, then from \cite[Thm. 3.1]{Rockafellar1988} it follows that \eqref{eq:GenQuadSecondEpiDer} holds if and only if the normal cone $N_{\partial g(x)}(-\nabla f(x))$ is a linear subspace, which is equivalent to
\[
	-\nabla f(x)\in\relint \partial g(x)
\]
where $\relint \partial g(x)$ is the relative interior of the convex set $\partial g(x)$.
\begin{es}[Lasso] Let $A\in\Re^{m\times n}$, $b\in\Re^m$ and $\lambda>0$.
Consider $f(x)=\frac12\|Ax-b\|^2$ and $g(x) = \lambda\|x\|_1$.
Minimizing $\varphi=f+g$ is a frequent problem known as lasso, and attempts to find a sparse least squares solution to the linear system $Ax=b$.
One has
$$ \left[\partial g(x) \right]_i =
	\begin{cases}
		\set{}{\lambda} & x_i > 0 \\
		\set{}{-\lambda} & x_i < 0 \\
		[-\lambda,\lambda] & x_i = 0.
	\end{cases}
$$
In this case $\twiceepi[-\nabla f(x)]{g}{x}$ is generalized quadratic at a solution $x$ as long as whenever $x_i=0$ it holds that $|(A^T(Ax-b))_i| \neq\lambda$.
%Therefore, for any $v\in\partial g(x)$, the normal cone $N_{\partial g(x)}(v)$ can be expressed as the product $K_1 \times \ldots \times K_n$ where
%$$ K_i =
%	\begin{cases}
%		\Re & x_i \neq 0 \\
%		{0} & x_i = 0 \mbox{ and } v_i\in(-1, 1)\\
%		\Re_+ & x_i = 0 \mbox{ and } v_i = 1 \\
%		\Re_- & x_i = 0 \mbox{ and } v_i = -1.
%	\end{cases}
%$$
\end{es}

We begin by investigating differentiability of the residual mapping $R_\gamma$.

\begin{lem}\label{lem:DiffFPR}
Suppose that \Cref{{Ass:fg},,{Ass:fC2}} are satisfied, and that $g$ satisfies \Cref{Ass:GenQuad} (\Cref{Ass:Strict2Epi}) at a point $x\in\zer\partial\varphi$.
Then, $\prox_{\gamma g}$ is (strictly) differentiable at $x-\gamma\nabla f(x)$, and $R_\gamma$ is (strictly) differentiable at \(x\) with Jacobian
\begin{equation}\label{eq:JacFPR}
	\jac{R_\gamma}{x} = \gamma^{-1}(I-P_\gamma(x) Q_\gamma(x)),
\end{equation}
where $Q_\gamma$ is as in \eqref{Eq:Q}, and
\begin{equation}\label{eq:JacP}
	P_\gamma(x)
{}={}
	\jac{\prox_{\gamma g}}{x-\gamma\nabla f(x)}
{}={}
	\Pi_S[I+\gamma M]^{-1}\Pi_S.
\end{equation}
Moreover, $Q_\gamma(x)$ and $P_\gamma(x)$ are symmetric, $P_\gamma(x)\succeq 0$, $\|P_\gamma(x)\| \leq 1$, and if \(\gamma\in(0,1/L_f)\) then $Q_\gamma(x)\succ 0$.
\begin{proof}
See \Cref{Proof:lem:DiffFPR}.
\end{proof}
\end{lem}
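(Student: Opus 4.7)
The plan is to first prove (strict) Fréchet differentiability of $\prox_{\gamma g}$ at $y := x - \gamma\nabla f(x)$ with the asserted Jacobian $P_\gamma(x)$, and then to obtain the result for $R_\gamma$ via the chain rule applied to $R_\gamma(x) = \gamma^{-1}(x - \prox_{\gamma g}(x - \gamma \nabla f(x)))$. Since $x \in \zer\partial\varphi$ gives $v := -\nabla f(x) \in \partial g(x)$, we have $y = x + \gamma v$ and $\prox_{\gamma g}(y) = x$, so the point at which differentiability of $\prox_{\gamma g}$ is needed is exactly the one at which the second-order hypothesis on $g$ applies.

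The central tool is the Poliquin--Rockafellar/Rockafellar--Wets correspondence (essentially \cite[Thm.~13.40]{rockafellar2011variational} together with its strict counterpart) asserting that, for convex $g$, (strict) twice epi-differentiability of $g$ at $x$ for $v$ is equivalent to (strict) proto-differentiability of $\prox_{\gamma g} = (I + \gamma\partial g)^{-1}$ at $y$, with (strict) semiderivative equal to $\prox_{\gamma h/2}$, where $h := \twiceepi[v]{g}{x}$. Under \Cref{Ass:GenQuad} we have $h(d) = \innprod{d}{Md} + \delta_S(d)$, so writing the KKT system of the quadratic program defining $\prox_{\gamma h/2}(w)$ yields the closed-form linear map $w \mapsto \Pi_S[I + \gamma M]^{-1}\Pi_S\, w = P_\gamma(x)\, w$; the inverse is well defined because ${\rm Im}(M) \subseteq S$ and ${\rm Ker}(M) \supseteq S^\perp$ imply that $I + \gamma M$ acts as the identity on $S^\perp$ and as the positive-definite map $I + \gamma M|_S$ on $S$. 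Because this semiderivative is linear in $w$, (strict) semidifferentiability upgrades to (strict) Fréchet differentiability at $y$ with Jacobian $P_\gamma(x)$.

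Formula \eqref{eq:JacFPR} is then obtained by the chain rule, using that $x \mapsto x - \gamma\nabla f(x)$ is (strictly) differentiable by \Cref{Ass:fC2}, yielding $\jac{R_\gamma}{x} = \gamma^{-1}(I - P_\gamma(x)Q_\gamma(x))$. The matrix properties amount to a short verification: $Q_\gamma(x)$ is symmetric because $\nabla^2 f(x)$ is, and $Q_\gamma(x) \succ 0$ when $\gamma \in (0, 1/L_f)$ since $\|\nabla^2 f(x)\| \leq L_f$; $P_\gamma(x)$ is symmetric because $\Pi_S$ and $I + \gamma M$ are, and its eigenvalues on $S$ are $1/(1 + \gamma\lambda_i) \in (0, 1]$ with $\lambda_i \geq 0$ the eigenvalues of $M|_S$, while it vanishes on $S^\perp$, so $0 \preceq P_\gamma(x) \preceq I$, and in particular $\|P_\gamma(x)\| \leq 1$.

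The main obstacle is pinning down the right generalized second-order result: the implication ``twice epi-differentiability with generalized quadratic second epi-derivative at $(x, v)$'' $\Rightarrow$ ``Fréchet-differentiability of $\prox_{\gamma g}$ at $x + \gamma v$ with the specified linear Jacobian'' is where all the substantive analytic content resides, and must be imported from the second-order variational analysis of Rockafellar and Poliquin--Rockafellar. Once this correspondence is invoked, the rest reduces to the explicit computation of the quadratic prox, the chain-rule identity, and a spectral check for the matrix properties.
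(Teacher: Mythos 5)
Your proposal is correct and follows essentially the same route as the paper: both reduce the claim to the second-order variational-analysis correspondence between (strict) twice epi-differentiability of $g$ at $x$ for $-\nabla f(x)$ and (strict) differentiability of $\prox_{\gamma g}$ at $x-\gamma\nabla f(x)$ (the paper cites Poliquin--Rockafellar, Thms.~3.8 and 4.1, and \cite[Ex.~13.45]{rockafellar2011variational} applied to the tilted function, where you invoke the proto-differentiability theorem plus the linearity-of-the-semiderivative upgrade), then compute the quadratic prox to get $\Pi_S[I+\gamma M]^{-1}\Pi_S$, apply the chain rule for the composition with $x\mapsto x-\gamma\nabla f(x)$, and finish with the same spectral verification of the matrix properties.
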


Next, we see that differentiability of the residual $R_\gamma$ is equivalent to that of $\nabla\varphi_\gamma$.
Mild additional assumptions on $f$ extend this kinship to strict differentiability.
Moreover, all strong (local) minimizers of the original problem, \ie, of $\varphi$, are also strong (local)
minimizers of $\varphi_\gamma$ (and vice versa, due to the lower-bound property of $\varphi_\gamma$).

\begin{thm}\label{thm:DiffGradFBE}
Suppose that \Cref{{Ass:fg},,{Ass:fC2}} are satisfied, and that $g$ satisfies \Cref{Ass:GenQuad} at a point $x\in\zer\partial\varphi$.
Then, $\varphi_\gamma$ is twice differentiable at $x$, with symmetric Hessian given by
\begin{equation}\label{eq:HessFBE}
	\nabla^2\varphi_\gamma(x) = \gamma^{-1}Q_\gamma(x)(I-P_\gamma(x) Q_\gamma(x)),
\end{equation}
where $Q_\gamma(x)$ and $P_\gamma(x)$ are as in \Cref{lem:DiffFPR}.
%$\jac{\prox_{\gamma g}}{x-\gamma\nabla f(x)}$.
If, moreover, $\nabla^2 f$ is Lipschitz continuous around $x$ and $g$ satisfies \Cref{Ass:Strict2Epi} at $x$, then $\varphi_\gamma$ is strictly twice differentiable at $x$.
\begin{proof}
Recall from \eqref{eq:DerPen} that $\nabla \varphi_\gamma(x) = Q_\gamma(x) R_\gamma(x)$.
The result follows from \Cref{lem:DiffFPR} and \Cref{prop:StrDiffProd} with $Q = Q_\gamma$ and $R = R_\gamma$.
\end{proof}
\end{thm}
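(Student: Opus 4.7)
The plan is to differentiate the gradient identity $\nabla\varphi_\gamma(x)=Q_\gamma(x)R_\gamma(x)$ from \Cref{thm:DerPen} by applying a product rule, and to exploit the fact that at a critical point $R_\gamma(x)=0$ (by \eqref{eq:StatCond}) to kill the term that would involve derivatives of $Q_\gamma$. Concretely, $Q_\gamma$ is continuously differentiable on $\Re^n$ because $f\in\cont^2$ by \Cref{Ass:fC2}, while \Cref{lem:DiffFPR} tells us that $R_\gamma$ is differentiable at $x$ with Jacobian $\gamma^{-1}(I-P_\gamma(x)Q_\gamma(x))$. Writing out the product rule for $x\mapsto Q_\gamma(x)R_\gamma(x)$ at the point $x$ in the form $Q_\gamma(x)\jac{R_\gamma}{x}+(\text{term linear in }R_\gamma(x))$, the hypothesis $x\in\zer\partial\varphi$ makes the second summand vanish, and we are left exactly with
\[
\gamma^{-1}Q_\gamma(x)\bigl(I-P_\gamma(x)Q_\gamma(x)\bigr),
\]
which is \eqref{eq:HessFBE}.

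For symmetry, I would just expand the formula to $\gamma^{-1}Q_\gamma(x)-\gamma^{-1}Q_\gamma(x)P_\gamma(x)Q_\gamma(x)$ and note that both $Q_\gamma(x)$ and $P_\gamma(x)$ are symmetric by \Cref{lem:DiffFPR}, so each summand is symmetric. For the second assertion on strict twice differentiability, I would upgrade each factor independently: under Lipschitz continuity of $\nabla^2 f$ near $x$, the map $Q_\gamma$ is strictly differentiable at $x$; under \Cref{Ass:Strict2Epi}, the strict part of \Cref{lem:DiffFPR} yields strict differentiability of $R_\gamma$ at $x$. Applying the strict-differentiability version of the same product rule (the paper's \Cref{prop:StrDiffProd} is precisely set up to cover both regimes uniformly, with the choice $Q=Q_\gamma$ and $R=R_\gamma$ as indicated in the author's hint) then gives strict twice differentiability of $\varphi_\gamma$ at $x$ with the same Hessian formula.

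The main technical point to be careful about is the invocation of the product rule: one factor is only differentiable at the single point $x$, not on a neighborhood, so a naive chain-rule calculation would not apply. This is exactly why the vanishing of $R_\gamma(x)$ is essential—it makes the missing derivative $\jac{Q_\gamma}{x}$ irrelevant—and why one needs a pointwise product rule such as \Cref{prop:StrDiffProd} rather than classical smooth calculus. Once that tool is in hand the computation is a one-line consequence of \Cref{thm:DerPen,lem:DiffFPR}, with no further obstacles.
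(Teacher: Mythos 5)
Your proposal follows essentially the same route as the paper: differentiate the identity $\nabla\varphi_\gamma=Q_\gamma R_\gamma$ via the pointwise product rule of \Cref{prop:StrDiffProd}, using $R_\gamma(x)=0$ to discard the term involving derivatives of $Q_\gamma$, with \Cref{lem:DiffFPR} supplying the Jacobian of $R_\gamma$ and the symmetry of $Q_\gamma(x)$ and $P_\gamma(x)$. One small correction: for the strict case you do not need (and do not get) strict differentiability of $Q_\gamma$ from Lipschitz continuity of $\nabla^2 f$ — \Cref{prop:StrDiffProd} only asks that $Q_\gamma$ be Lipschitz continuous around $x$, which is exactly what that hypothesis provides.
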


\begin{thm}\label{thm:2ndOrder}
Suppose that \Cref{{Ass:fg},,{Ass:fC2}} are satisfied, and that $g$ satisfies \Cref{Ass:GenQuad} at a point $x\in\zer\partial\varphi$.
Then, for all $\gamma\in(0,1/L_f)$ the following are equivalent:
\begin{enumthm}[{label=(\alph*)},{ref=\thethm(\alph*)}]
	\item\label{it:StrLocMin2}
		$x$ is a strong local minimum for $\varphi$;
	\item\label{it:StrLocMin1}
		for all $d\in S$, $\innprod{d}{(\nabla^2 f(x) + M)d} > 0$;
	\item\label{it:StrLocMin3}
		$\jac{R_\gamma}{x}$ is similar to a symmetric and positive definite matrix;
	\item\label{it:StrLocMin4}
		$\nabla^2 \varphi_\gamma(x) \succ 0$;
	\item\label{it:StrLocMin5}
		$x$ is a strong local minimum for $\varphi_\gamma$.
\end{enumthm}
\begin{proof}
See \Cref{Proof:thm:2ndOrder}.
\end{proof}
\end{thm}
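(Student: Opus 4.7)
The plan is to close the five-way equivalence via a short cycle exploiting both the original problem and its smooth surrogate $\varphi_\gamma$. I would establish (a)$\Leftrightarrow$(e), (d)$\Leftrightarrow$(e), (c)$\Leftrightarrow$(d), and (a)$\Leftrightarrow$(b) as four separate pieces; the first three are essentially algebraic manipulations using material already assembled in the paper, while the last invokes the second-order optimality theory for composite nonsmooth functions and is the main obstacle.

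For (a)$\Leftrightarrow$(e) I would work directly from \Cref{prop:BoundsFBE}, using $\varphi(x)=\varphi_\gamma(x)$ (\Cref{prop:FBEzer}). Direction (e)$\Rightarrow$(a) is immediate from \Cref{prop:UppBnd}, which gives $\varphi(y)\geq\varphi_\gamma(y)$ and thus transfers any quadratic growth bound for $\varphi_\gamma$ to $\varphi$. Direction (a)$\Rightarrow$(e) uses \Cref{prop:LowBnd}: since $\gamma<1/L_f$ the slack $\tfrac{\gamma(1-\gamma L_f)}{2}$ is positive, and combining $\varphi_\gamma(y)\geq\varphi(T_\gamma(y))+\tfrac{\gamma(1-\gamma L_f)}{2}\|R_\gamma(y)\|^2$ with a quadratic growth bound for $\varphi$ near $x=T_\gamma(x)$, the continuity of $T_\gamma$, and the triangle-inequality estimate $\|y-x\|^2\leq 2\|T_\gamma(y)-x\|^2+2\gamma^2\|R_\gamma(y)\|^2$ delivers the corresponding bound for $\varphi_\gamma$.

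For (d)$\Leftrightarrow$(e) and (c)$\Leftrightarrow$(d): by \Cref{thm:DiffGradFBE}, $\varphi_\gamma$ is twice differentiable at $x$ with symmetric Hessian, while \Cref{thm:DerPen} (using $x\in\zer\partial\varphi$ and $\gamma<1/L_f$) gives $\nabla\varphi_\gamma(x)=0$. Integrating $\nabla\varphi_\gamma$ along the segment $[x,y]$ yields the second-order Taylor expansion
\[
\varphi_\gamma(y)=\varphi_\gamma(x)+\tfrac12\innprod{y-x}{\nabla^2\varphi_\gamma(x)(y-x)}+o(\|y-x\|^2),
\]
whose standard analysis delivers (d)$\Leftrightarrow$(e). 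For (c)$\Leftrightarrow$(d), combine $\nabla^2\varphi_\gamma(x)=Q_\gamma(x)\jac{R_\gamma}{x}$ with $Q_\gamma(x)\succ 0$ (\Cref{lem:DiffFPR}, since $\gamma<1/L_f$), so that the symmetric square root $Q_\gamma(x)^{1/2}$ is well defined and invertible. The identity
\[
Q_\gamma(x)^{1/2}\jac{R_\gamma}{x}Q_\gamma(x)^{-1/2}
{}={}
Q_\gamma(x)^{-1/2}\nabla^2\varphi_\gamma(x)Q_\gamma(x)^{-1/2}
\]
exhibits $\jac{R_\gamma}{x}$ as similar to a symmetric matrix with the same inertia as $\nabla^2\varphi_\gamma(x)$, so the two matrices are positive definite together.

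The main obstacle is (a)$\Leftrightarrow$(b), which I would obtain via the classical second-order optimality theory for composite nonsmooth functions. Under \Cref{Ass:GenQuad} and $f\in\cont^2$, the sum rule for twice epi-derivatives yields
\[
\twiceepi[0]{\varphi}{x}[d]
{}={}
\innprod{d}{\nabla^2 f(x)d}+\innprod{d}{Md}+\delta_S(d),
\]
and (b) is precisely positive definiteness of this form on $\Re^n\setminus\set{}{0}$ (the case $d\notin S$ being automatic via $\delta_S$). A second-order sufficient/necessary optimality theorem from variational analysis then characterizes strong local minimality of $\varphi$ at $x$ by this positive definiteness, closing the loop. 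The delicate point is verifying the applicability of such an abstract result under \Cref{Ass:GenQuad} alone (rather than the stronger \Cref{Ass:Strict2Epi}), and this is why the detailed argument is deferred to the appendix.
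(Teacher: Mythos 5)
Your proof is correct, and three of your four links are exactly the paper's: (a)$\Leftrightarrow$(b) via the sum rule for second-order epi-derivatives and the characterization of strong local minima in Rockafellar--Wets, Thm.\ 13.24(c) (your worry about needing more than \Cref{Ass:GenQuad} is unfounded — the paper invokes the same result under the same hypothesis); (c)$\Leftrightarrow$(d) via the similarity of $\jac{R_\gamma}{x}$ to the symmetric matrix $Q_\gamma(x)^{-1/2}\nabla^2\varphi_\gamma(x)Q_\gamma(x)^{-1/2}$; and (d)$\Leftrightarrow$(e), which the paper dispatches as ``trivial since $\nabla^2\varphi_\gamma(x)$ exists'' — i.e.\ your Taylor-expansion argument. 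Where you genuinely diverge is in how the cycle is closed. The paper's remaining link is (b)$\Leftrightarrow$(c): it observes that $\jac{R_\gamma}{x}$ is similar to $\gamma^{-1}(I-Q_\gamma(x)P_\gamma(x))$ with real eigenvalues, invokes Horn--Johnson Thm.\ 7.7.3 to reduce positivity of those eigenvalues to $Q_\gamma(x)^{-1}\succ P_\gamma(x)$, and then uses the explicit formula $P_\gamma(x)=\Pi_S[I+\gamma M]^{-1}\Pi_S$ to show this holds iff $\nabla^2 f(x)+M\succ0$ on $S$. You instead close the loop with (a)$\Leftrightarrow$(e), built from \Cref{prop:BoundsFBE} together with $\varphi(x)=\varphi_\gamma(x)$, $T_\gamma(x)=x$, continuity of $T_\gamma$, and the estimate $\|y-x\|^2\leq2\|T_\gamma(y)-x\|^2+2\gamma^2\|R_\gamma(y)\|^2$; the residual term $\tfrac{\gamma(1-\gamma L_f)}{2}\|R_\gamma(y)\|^2$ is positive precisely because $\gamma<1/L_f$, so the quadratic growth of $\varphi$ at $T_\gamma(y)$ transfers to $\varphi_\gamma$ at $y$, and the converse direction is immediate from $\varphi_\gamma\leq\varphi$. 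Your substitution buys elementarity and robustness: the (a)$\Leftrightarrow$(e) link needs only \Cref{Ass:fg} and works at the level of function values, bypassing the explicit Jacobian of the proximal mapping and the eigenvalue theorem for products of positive (semi)definite matrices. The paper's route, in exchange, produces the concrete algebraic equivalence between the curvature condition on $S$ and positivity of $I-Q_\gamma(x)P_\gamma(x)$, which is informative in its own right. Both decompositions connect all five statements, so the two proofs are interchangeable.
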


		\subsection{Interpretations}
			An interesting observation is that the FBE provides a link between gradient methods and FBS, just like the Moreau envelope~\eqref{eq:MoreauEnv}
does for the proximal point algorithm~\cite{rockafellar1976monotone}. To see this, consider the problem
\begin{equation}\label{eq:NSprob}
\minimize\ g(x)
\end{equation}
where $g\in\Gamma_0(\OurSpace)$. The proximal point algorithm for
solving~\eqref{eq:NSprob} is
\begin{equation}\label{eq:ProxMin}
x^{k+1}=\prox_{\gamma g}(x^k).
\end{equation}
It is well known that the proximal point algorithm can be interpreted as a gradient method for minimizing the Moreau envelope of $g$, cf.~\eqref{eq:MoreauEnv}.
Indeed, due to~\eqref{eq:nabla_e}, iteration~\eqref{eq:ProxMin} can be expressed as
$$x^{k+1}=x^k-\gamma\nabla g^\gamma(x^k).$$
This simple idea provides a link between nonsmooth and smooth optimization and has led to the discovery of a variety of algorithms for problem~\eqref{eq:NSprob},
such as semismooth Newton methods~\cite{fukushima1996globally}, variable-metric~\cite{bonnans1995family} and quasi-Newton methods~\cite{mifflin1998quasi,chen1999proximal,burke2000superlinear},
and trust-region methods~\cite{sagara2005trust}, to name a few.

However, when dealing with composite problems, even if $\prox_{\gamma f}$ and $\prox_{\gamma g}$
are cheaply computable, computing the proximal mapping of $\varphi = f+g$ is usually as hard as
solving~\eqref{eq:GenProb} itself.
On the other hand, forward-backward splitting takes advantage of the structure of the
problem by operating separately on the two summands, cf.~\eqref{eq:FBS}.
The question that naturally arises is the following:
\begin{quote}
\emph{Is there a continuously differentiable function that provides an
interpretation of FBS as a gradient method, just like the Moreau envelope does
for the proximal point algorithm?} %and problem~\eqref{eq:NSprob}?}
\end{quote}
The forward-backward envelope provides an affirmative answer. Specifically, whenever $f$ is $\cont^2$,
FBS can be interpreted as the following (variable-metric) gradient method on the FBE:
\begin{equation}x^{k+1}=x^k-\gamma(I-\gamma\nabla^2 f(x^k))^{-1}\nabla \varphi_\gamma(x^k),\label{eq:VariableMetric}\end{equation}
cf. \Cref{thm:DerPen}. Furthermore, the following properties hold for the Moreau envelope% $g^\gamma$
\begin{equation*}
g^{\gamma} \leq g,\quad\inf g^{\gamma} = \inf g,\quad\argmin g^{\gamma} = \argmin g,
\end{equation*}
which correspond to \Cref{{prop:UppBnd},,{prop:EquivFBE}} for the FBE.
The relationship between Moreau envelope and forward-backward envelope is then
apparent. This opens the possibility of extending FBS and devising new algorithms for problem~\eqref{eq:GenProb} by simply reconsidering
and appropriately adjusting methods for unconstrained minimization of continuously differentiable functions, the most well studied problem in
optimization.

	\section{Forward-backward line-search methods}
		\label{SEC:Algorithms}
		We consider line-search methods applied to the problem of minimizing $\varphi_\gamma$, hence solving \eqref{eq:GenProb}.
Requirements of such methods are often restrictive, including convexity or even strong convexity of the objective function, properties that unfortunately the FBE does not satisfy in general.
As opposed to this, FBS possesses strong convergence properties and complexity estimates.
We now show that it is possible to exploit the composite structure of~\eqref{eq:GenProb}
and devise line-search methods with the same global convergence properties and oracle information as FBS.

\begin{algorithm}%
	\caption{Forward-backward line-search method with adaptive $\gamma$}%
	\label{alg:Global}%
	\begin{algorithmic}[1]
  \Require $x^0\in \OurSpace$, $\gamma_0 > 0$, $\sigma\in(0,1)$, $\beta\in[0,1)$, $k\gets 0$
  \If{$R_{\gamma_k}(x^k)=0$} stop \label{step:GlobalStopping}
  \EndIf
  \State select $d^k$ such that $\innprod{d^k}{\nabla\varphi_{\gamma_k}(x^k)}\leq 0$ \label{step:GlobalDirection}
  \State select $\tau_k \geq 0$ and set $w^k\gets x^k+\tau_k d^k$ such that $\varphi_{\gamma_k}(w^k)\leq \varphi_{\gamma_k}(x^k)$ \label{step:GlobalStepsize}
  \If{$\varphi(T_{\gamma_k}(w^k)) +\frac{\beta\gamma_k}{2}\|R_{\gamma_k}(w^k)\|^2> \varphi_{\gamma_k}(w^k)$} \label{step:GlobalCheck} $\gamma_k\gets \sigma \gamma_k$, go to step \ref{step:GlobalFixedStopping}
  \Else\ $\gamma_{k+1}\gets \gamma_k$
  \EndIf
  \State $x^{k+1}\gets T_{\gamma_k}(w^k)$ \label{step:GlobalFB}
  \State $k \gets k+1$, go to step \ref{step:GlobalFixedStopping}
\end{algorithmic}

\end{algorithm}

\Cref{alg:Global} interleaves descent steps over the FBE with forward-backward steps. 
In particular, steps~\ref{step:GlobalDirection} and~\ref{step:GlobalStepsize} provide fast asymptotic convergence when
directions $d^k$ are appropriately selected, while step~\ref{step:GlobalFB} ensures global convergence: this is of central importance, as such properties are not usually enjoyed by standard line-search methods employed to minimize general nonconvex functions
\cite{Dai2002,Mascarenhas2004,Mascarenhas2007,Dai2013}.
Moreover, in the convex case we are able to show global convergence rate results which are not typical for line-search methods with, e.g., quasi-Newton directions.
We anticipate some of the favorable properties that \Cref{alg:Global} shares with FBS:
\begin{itemize}
	\item square-summability of the residuals for lower bounded \(\varphi\) (\Cref{prop:GlobalDesc});
	\item global convergence when \(\varphi\) has bounded level sets and satisfies the Kurdyka-{\L}ojasiewicz at its stationary points
	(\Cref{thm:Convergence});
	\item global sublinear rate of the objective for convex \(\varphi\) with bounded level sets (\Cref{th:RateGlobalConv});
	\item local linear rate when $\varphi$ has the {\L}ojasiewicz property at its critical points (\Cref{thm:LocalLinConvKL}).
\end{itemize}
Moreover, unlike ordinary line-search methods applied to \(\varphi_\gamma\), we will see in \Cref{prop:GlobalDesc}
that \Cref{alg:Global} is a descent method both for both \(\varphi_\gamma\) and \(\varphi\).
Note that, despite the fact that the algorithm operates on \(\varphi_\gamma\), all the above properties require assumptions or
provide results on \(\varphi\), \ie, on the original problem.

The parameter $\gamma$ defining the FBE is adjusted in step~\ref{step:GlobalCheck}
so as to comply with the inequality in \Cref{prop:LowBnd},
starting from an initial value $\gamma_0$ and decreasing it when necessary.
The next result shows that $\gamma_0$ is decremented only a finite number of times along the iterations, and therefore $\gamma_k$ is positive and eventually constant.
In the rest of the paper we will denote \(\gamma_\infty\) such asymptotic value of \(\gamma_k\).

\begin{lem}\label{lem:LowerBoundGamma}
Let $\seq{\gamma_k}$ the sequence of stepsize parameters computed by \Cref{alg:Global}, and let % or \Cref{alg:Optimal}.
\(
	\gamma_\infty
{}={}
	\min_{i\in\Nn} \gamma_i
\).
Then for all $k\in\Nn$,
\[
	\gamma_k
{}\geq{}
	\gamma_\infty
{}\geq{}
	\min\set{}{\gamma_0, \sigma(1-\beta)/L_f}
{}>{}
	0.
\]
\begin{proof}
See \Cref{Proof:lem:LowerBoundGamma}.
\end{proof}
\end{lem}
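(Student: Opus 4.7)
The plan is to use \Cref{prop:LowBnd} to identify a sufficient condition on $\gamma$ under which the test in step \ref{step:GlobalCheck} of \Cref{alg:Global} is automatically passed, and then to exploit the geometric decrease $\gamma_k\mapsto\sigma\gamma_k$ to bootstrap the uniform lower bound.

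First, I would observe that for any \(\gamma>0\) and any \(w\in\Re^n\), \Cref{prop:LowBnd} yields
\[
	\varphi(T_\gamma(w))
	{}+{}
	\tfrac{\beta\gamma}{2}\|R_\gamma(w)\|^2
{}\leq{}
	\varphi_\gamma(w)
	{}-{}
	\tfrac{\gamma}{2}\bigl(1-\beta-\gamma L_f\bigr)\|R_\gamma(w)\|^2.
\]
In particular, whenever \(\gamma\leq(1-\beta)/L_f\) the quantity \(1-\beta-\gamma L_f\) is nonnegative, so the test in step \ref{step:GlobalCheck} is satisfied and \(\gamma_k\) is not shrunk in that inner loop. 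Equivalently, the backtracking rule \(\gamma_k\gets\sigma\gamma_k\) is triggered only when \(\gamma_k>(1-\beta)/L_f\).

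From this it is immediate that the inner loop terminates in finitely many passes: the sequence of tentative values is \(\gamma_0,\sigma\gamma_0,\sigma^2\gamma_0,\ldots\), and since \(\sigma\in(0,1)\) we will eventually have \(\sigma^j\gamma_0\leq(1-\beta)/L_f\). Hence \(\seq{\gamma_k}\) is well defined, nonincreasing, and only finitely many strict decreases occur across all iterations.

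For the quantitative bound, consider two cases. If the algorithm never triggers the backtracking step, then \(\gamma_k\equiv\gamma_0\) and \(\gamma_\infty=\gamma_0\). Otherwise, let the last backtracking event produce the value \(\gamma_\infty\) from some previous value \(\tilde\gamma\), so that \(\gamma_\infty=\sigma\tilde\gamma\). By the contrapositive of the observation above, the backtracking could be triggered only because \(\tilde\gamma>(1-\beta)/L_f\); consequently \(\gamma_\infty=\sigma\tilde\gamma>\sigma(1-\beta)/L_f\). In either case
\[
	\gamma_k
{}\geq{}
	\gamma_\infty
{}\geq{}
	\min\set{}{\gamma_0,\,\sigma(1-\beta)/L_f}
{}>{}
	0,
\]
which is the desired bound. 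The only mildly subtle point is ruling out an infinite backtracking loop at a single outer iteration; this is handled by the very same argument, since after at most \(\lceil\log_\sigma((1-\beta)/(L_f\gamma_0))\rceil\) shrinkages the current \(\gamma_k\) falls below \((1-\beta)/L_f\) and the test is met.
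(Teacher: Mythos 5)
Your argument is correct and is essentially the paper's own proof: both hinge on the observation, via \Cref{prop:LowBnd}, that the condition in step \ref{step:GlobalCheck} can trigger a shrinkage only when the current $\gamma$ exceeds $(1-\beta)/L_f$, so the value produced by the last shrinkage is at least $\sigma(1-\beta)/L_f$. The paper merely packages this as a contradiction on the first index violating the bound, whereas you argue directly via the contrapositive; the substance is identical.
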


\begin{rem}\label{rem:Algorithm1}
In \Cref{alg:Global}:
\begin{enumrem}
	\item Selecting $\beta = 0$ and $d^k\equiv 0$, $\tau_k\equiv 0$ for all $k$ yields the classical forward-backward
		splitting with backtracking on $\gamma$ \cite[Sec.~3]{beck2009fast}.

	\item\label{it:LineSearch} Substituting step~\ref{step:GlobalFB} with $x^{k+1}\gets w^k$ yields a classical line-search method for the problem of minimizing $\varphi_\gamma$, where a suitable $\gamma$ is adaptively determined.
		However, extensive numerical experience has shown that even though this variant seems to always converge, our choice $x^{k+1}\gets T_{\gamma_k}(w^k)$ usually performs better in practice, in terms of number of forward-backward steps, cf. \Cref{SEC:Simulations}.

	\item Step~\ref{step:GlobalFB} % of \Cref{alg:Global}
	  comes at no additional cost once $\tau_k$
		has been determined by means of a line-search. In fact, in order to evaluate $\varphi_{\gamma_k}(w^k)$
		and test the condition in step~\ref{step:GlobalStepsize}, the evaluation of $T_{\gamma_k}(w^k)$ is
		required.

	\item When $L_f$ is known and $\gamma_0 \in (0,(1-\beta)/L_f]$, the condition in step~\ref{step:GlobalCheck}
	  never holds, see \Cref{prop:LowBnd}.
	  In this case \Cref{alg:Global} reduces to \Cref{alg:GlobalFixed}: without loss of generality
	  we will analyze the first.
\end{enumrem}
\end{rem}

\begin{algorithm}%
	\caption{Forward-backward line-search method with fixed $\gamma$}%
	\label{alg:GlobalFixed}%
	\begin{algorithmic}[1]
  \Require $x^0\in \OurSpace$, $\beta\in[0,1)$, $\gamma \in (0,(1-\beta)/L_f)$, $k\gets 0$
  \If{$R_{\gamma_k}(x^k)=0$} stop \label{step:GlobalFixedStopping}
  \EndIf
  \State select $d^k$ such that $\innprod{d^k}{\nabla\varphi_{\gamma}(x^k)}\leq 0$ \label{step:GlobalFixedDirection}
  \State select $\tau_k \geq 0$ and set $w^k\gets x^k+\tau_k d^k$ such that $\varphi_{\gamma}(w^k)\leq \varphi_{\gamma}(x^k)$ \label{step:GlobalFixedStepsize}
  \State $x^{k+1}\gets T_{\gamma}(w^k)$ \label{step:GlobalFixedFB}
  \State $k \gets k+1$, go to step \ref{step:GlobalFixedStopping}
\end{algorithmic}

\end{algorithm}

%From now on we assume that problem \eqref{eq:GenProb} has at least one solution.
%\begin{ass}
%The set $\argmin \varphi$ is nonempty.
%\end{ass}

We denote by  $\omega(x^0)$ the set of cluster points of the sequence $\seq{x^k}$ produced
by \Cref{alg:Global} started from $x^0\in\Re^n$. The following result states that \Cref{alg:Global}
is a descent method both for the FBE \(\varphi_\gamma\) and for the original function \(\varphi\), and, as it holds for FBS, that the sequence of fixed-point residuals is square-summable if the function is lower bounded.

\begin{prop}[Subsequential convergence]\label{prop:GlobalDesc}
Suppose that \Cref{Ass:fg} is satisfied.
Then, the following hold for the sequences generated by \Cref{alg:Global}:
\begin{enumprop}
	\item\label{prop:FunDecr}
		\(\displaystyle
			\varphi(x^{k+1}) \leq
			\varphi(x^k)-\tfrac{\beta\gamma_k}{2}\|R_{\gamma_k}(w^k)\|^2-\tfrac{{\gamma_k}}{2}\|R_{\gamma_k}(x^k)\|^2
		\)~
		for all \(k\in\Nn\);
	\item\label{prop:Rxl2}
		either $\seq{\|R_{\gamma_k}(x^k)\|}$ is square summable, or \(\varphi(x^k)\to\inf\varphi=-\infty\), in which case $\omega(x^0)=\emptyset$;
	\item\label{prop:ClusterCritical}
		$\omega(x^0)\subseteq \zer\partial\varphi$, \ie, every cluster point of $\seq{x^k}$ is critical;
	\item\label{prop:Rwl2}
		if $\beta>0$, then either $\seq{\|R_{\gamma_k}(w^k)\|}$ is square summable and every cluster point of $\seq{w^k}$ is critical, or \(\varphi_{\gamma_k}(w^k)\to\inf\varphi=-\infty\) in which case \(\seq{w^k}\) has no cluster points.
\end{enumprop}
\end{prop}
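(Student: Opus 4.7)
The plan is to prove (i) by chaining three inequalities that directly reflect the algorithm's structure, and then derive (ii)--(iv) as consequences via telescoping and continuity arguments.

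For \textbf{(i)}, I would combine: (a) the acceptance test in step~\ref{step:GlobalCheck}, which when satisfied yields \(\varphi(x^{k+1}) = \varphi(T_{\gamma_k}(w^k)) \leq \varphi_{\gamma_k}(w^k) - \tfrac{\beta\gamma_k}{2}\|R_{\gamma_k}(w^k)\|^2\); (b) the line-search condition imposed in step~\ref{step:GlobalStepsize}, namely \(\varphi_{\gamma_k}(w^k) \leq \varphi_{\gamma_k}(x^k)\); and (c) the basic upper bound \Cref{prop:UppBnd}, which gives \(\varphi_{\gamma_k}(x^k) \leq \varphi(x^k) - \tfrac{\gamma_k}{2}\|R_{\gamma_k}(x^k)\|^2\). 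Concatenating these three inequalities yields the claim.

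For \textbf{(ii)}, I would telescope (i) from \(0\) to \(N\); using \Cref{lem:LowerBoundGamma} to replace each \(\gamma_k\) by the uniform lower bound \(\gamma_\infty>0\), this produces
\[
	\tfrac{\gamma_\infty}{2}\sum_{k=0}^{N}\|R_{\gamma_k}(x^k)\|^2
	{}\leq{}
	\varphi(x^0) - \varphi(x^{N+1}).
\]
If \(\inf_k\varphi(x^k)>-\infty\) the right-hand side is bounded uniformly in \(N\), yielding square-summability. Otherwise, since \(\seq{\varphi(x^k)}\) is monotone nonincreasing by (i), \(\varphi(x^k)\to-\infty\), forcing \(\inf\varphi=-\infty\); and since \(\varphi\) is lower semicontinuous (sum of a continuous function and a closed one), no cluster point \(\bar x\) of \(\seq{x^k}\) can exist, for it would satisfy \(\varphi(\bar x)\leq\liminf_k\varphi(x^k)=-\infty\), contradicting properness.

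For \textbf{(iii)}, I would observe that \Cref{lem:LowerBoundGamma} gives \(\gamma_k=\gamma_\infty\) for all \(k\) sufficiently large, so (ii) yields \(\|R_{\gamma_\infty}(x^k)\|\to 0\) (when cluster points exist at all). Continuity of \(R_{\gamma_\infty}\) — which follows from nonexpansiveness of \(\prox_{\gamma_\infty g}\) together with continuity of \(\nabla f\) — then implies \(R_{\gamma_\infty}(\bar x)=0\) for any \(\bar x\in\omega(x^0)\), and \eqref{eq:StatCond} identifies such \(\bar x\) as a critical point of \(\varphi\).

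For \textbf{(iv)}, the assumption \(\beta>0\) lets me retain the \(\tfrac{\beta\gamma_k}{2}\|R_{\gamma_k}(w^k)\|^2\) term in the telescoped bound, so by the same dichotomy either \(\sum\|R_{\gamma_k}(w^k)\|^2<\infty\), or the monotone sequence \(\seq{\varphi(x^k)}\) goes to \(-\infty\). In the first alternative, because \(\gamma_k=\gamma_\infty\) eventually, continuity of \(R_{\gamma_\infty}\) (as in (iii)) shows cluster points of \(\seq{w^k}\) are fixed points of \(T_{\gamma_\infty}\), hence critical. In the second, chaining steps~\ref{step:GlobalCheck} and~\ref{step:GlobalStepsize} with \Cref{prop:UppBnd} gives
\[
	\varphi(x^{k+1}) \leq \varphi_{\gamma_k}(w^k) \leq \varphi_{\gamma_k}(x^k) \leq \varphi(x^k),
\]
so \(\varphi_{\gamma_k}(w^k)\to-\infty\); continuity of \(\varphi_{\gamma_\infty}\) (\Cref{thm:DerPen}) then rules out cluster points of \(\seq{w^k}\). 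The main subtlety to be careful about is the distinction between the two tracks in (iv) — in particular, correctly bounding \(\varphi_{\gamma_k}(w^k)\) between \(\varphi(x^{k+1})\) and \(\varphi(x^k)\) to justify \(\varphi_{\gamma_k}(w^k)\to\inf\varphi=-\infty\) — but this follows cleanly from the same three inequalities used to prove (i).
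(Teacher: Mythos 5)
Your proof is correct and follows essentially the same route as the paper: the same three-inequality chain (step~\ref{step:GlobalCheck}, step~\ref{step:GlobalStepsize}, \cref{prop:UppBnd}) for (i), telescoping together with the uniform positive lower bound on \(\gamma_k\) from \cref{lem:LowerBoundGamma} for (ii) and (iv), and lower semicontinuity of \(\varphi\) plus continuity of the residual for the remaining claims. The only cosmetic difference is in the divergent branch of (iv), where you squeeze \(\varphi_{\gamma_k}(w^k)\) between \(\varphi(x^{k+1})\) and \(\varphi(x^k)\) whereas the paper instead observes the eventual monotonicity of \(\seq{\varphi_{\gamma_k}(w^k)}\); both yield the same conclusion.
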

\begin{proof}
See \Cref{Proof:prop:GlobalDesc}.
\end{proof}

An immediate consequence is the following result concerning the convergence
of the fixed-point residual.

\begin{thm}\label{th:RateGlobal}
Suppose that \Cref{Ass:fg} is satisfied, and consider the sequences generated by \Cref{alg:Global}.
Then,
\begin{align*}
	\min_{i=0\cdots k}{
		\|R_{\gamma_{i}}(x^i)\|^2
	}
{}\leq{} &
	\frac{2}{(k+1)}
	\frac{
		\varphi(x^0)-\inf\varphi
	}{
		\min\set{}{\gamma_0, \sigma(1-\beta)/L_f}
	}.
\intertext{If $\beta > 0$, then for all $k\in\Nn$ we also have}
	\min_{i=0\cdots k}{
		\|R_{\gamma_{i}}(w^i)\|^2
	}
{}\leq{} &
	\frac{2}{(k+1)}
	\frac{
		\varphi(x^0)-\inf\varphi
	}{
		\beta\min\set{}{\gamma_0, \sigma(1-\beta)/L_f}
	}.
\end{align*}
\end{thm}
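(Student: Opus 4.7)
The plan is to telescope the descent inequality from \Cref{prop:FunDecr} and then convert the resulting sum into a bound on the minimum by a standard averaging argument, using \Cref{lem:LowerBoundGamma} to uniformly lower-bound the stepsize parameters $\gamma_k$.

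First I would sum the inequality in \Cref{prop:FunDecr} from $i=0$ to $k$. Since the terms on the right-hand side telescope, we get
\[
	\sum_{i=0}^{k}
	\Bigl(
		\tfrac{\gamma_i}{2}\|R_{\gamma_i}(x^i)\|^2
		{}+{}
		\tfrac{\beta\gamma_i}{2}\|R_{\gamma_i}(w^i)\|^2
	\Bigr)
{}\leq{}
	\varphi(x^0)-\varphi(x^{k+1})
{}\leq{}
	\varphi(x^0)-\inf\varphi.
\]
Note that if $\inf\varphi=-\infty$, then by \Cref{prop:Rxl2} the right-hand side would be the trivial bound (and the conclusion holds vacuously, since any finite right-hand side will do); so we may assume $\inf\varphi$ is finite.

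Next I would use \Cref{lem:LowerBoundGamma}, which guarantees $\gamma_i\geq\gamma_\infty\geq\min\set{}{\gamma_0,\sigma(1-\beta)/L_f}$ for every $i$. Dropping the $\beta\gamma_i\|R_{\gamma_i}(w^i)\|^2$ terms (all nonnegative) from the sum and substituting the lower bound on $\gamma_i$ gives
\[
	\tfrac{1}{2}\min\set{}{\gamma_0,\sigma(1-\beta)/L_f}
	\sum_{i=0}^{k}\|R_{\gamma_i}(x^i)\|^2
{}\leq{}
	\varphi(x^0)-\inf\varphi.
\]
Finally, since the minimum is at most the average, $(k+1)\min_{i=0\cdots k}\|R_{\gamma_i}(x^i)\|^2\leq\sum_{i=0}^{k}\|R_{\gamma_i}(x^i)\|^2$, which immediately yields the first claimed bound.

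For the second inequality, assuming $\beta>0$, the analogous step drops the $\gamma_i\|R_{\gamma_i}(x^i)\|^2$ terms from the telescoped sum instead and retains $\beta\gamma_i\|R_{\gamma_i}(w^i)\|^2$; the same lower bound on $\gamma_i$ and the same min-versus-average argument then deliver the bound on $\min_{i=0\cdots k}\|R_{\gamma_i}(w^i)\|^2$. There is no real obstacle here: the result is essentially a direct corollary of \Cref{prop:FunDecr} and \Cref{lem:LowerBoundGamma}, and the only care required is to handle the possibility $\inf\varphi=-\infty$ (which makes the bound trivial) and to keep track of the two separate descent terms so that each can be isolated in the two respective bounds.
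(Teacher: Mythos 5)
Your proposal is correct and follows essentially the same route as the paper: telescoping the descent inequality of \Cref{prop:FunDecr}, invoking \Cref{lem:LowerBoundGamma} to lower-bound the stepsizes, and bounding the minimum by the average (the paper bounds each summand below by $\gamma_k$ times the minimum using monotonicity of $\seq{\gamma_k}$, which is the same argument in a slightly different order). The only cosmetic difference is that you drop one residual term at a time rather than keeping both together before separating, which changes nothing.
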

\begin{proof}
See \Cref{Proof:th:RateGlobal}.
\end{proof}

We now analyze the convergence properties of \Cref{alg:Global}.
We first consider the case where $f$ is convex, and later discuss the general case under the assumption that $\varphi$ has the Kurdyka-{\L}ojasiewicz property.
In this distinction we also highlight the favorable property that our method enjoys in the convex case, namely the total freedom for selecting the directions \(d^k\) without any assumptions.
When the directions are selected, say, according to a quasi-Newton scheme \(d^k=-B_k^{-1}\nabla\varphi_\gamma(x^k)\) where \(B_k\) approximates in some sense the Hessian of \(\varphi_\gamma\) at \(x^k\), linear convergence to a strong minimum and sublinear convergence of the objective in general are not affected if \(\seq{B_k^{-1}}\) turned out to be unbounded.
In the nonconvex case, instead, boundedness of \(\seq{B_k^{-1}}\) will be necessary for the sake of global convergence when the Kurdyka-\L ojasiewicz property holds for \(\varphi\).
The latter is however a milder assumption with respect to usual nonconvex line-search methods where \(\seq{B_k^{-1}}\) is required to have bounded condition number or \(\seq{d^k}\) to be \emph{gradient-oriented} (see \cite{noll2013convergence} and the references therein).

		\subsection{Convergence in the convex case}
			\label{SUBSEC:ConvexCase}
			We now prove that when $f$ is convex \Cref{alg:Global} converges to the optimal objective value with the same sublinear rate as FBS.
Notice that we require convexity of the original function $f$ (and $g$), and \emph{not} that of \(\varphi_\gamma\) which may
fail to be convex even when \(\varphi\) is.

%However, differently from FBS which in the convex case is always ensured to converge to a solution when one exists, we will only be able to show global convergence of our method assuming that the generated sequence remains bounded, which in particular holds under \Cref{Ass:LevelSets}.
\begin{thm}[Global sublinear convergence]\label{th:RateGlobalConv}
Suppose that \Cref{{Ass:fg},{Ass:LevelSets}} are satisfied, and that \(f\) is convex.
Then, for the sequences generated by \Cref{alg:Global}, either $\varphi(x^0)-\inf\varphi \geq R^2/{\gamma_0}$ and
\begin{equation}\label{eq:FirstStep}
\varphi(x^1)-\inf\varphi \leq\frac{R^2}{2{\gamma_0}},
\end{equation}
or for any $ k\in\Nn$ it holds
\begin{equation}\label{eq:kStep}
\varphi(x^{ k})-\inf\varphi \leq\frac{2R^2}{k\min\set{}{\gamma_0, \sigma(1-\beta)/L_f}}.
\end{equation}
\end{thm}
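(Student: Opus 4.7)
The plan is to couple the algorithmic descent with a convexity-based interpolation argument applied inside the defining minimization of the FBE.

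First, by \Cref{prop:FunDecr} we have $\varphi(x^{k+1}) \leq \varphi(x^k)$, so every iterate lies in $\set{}{\varphi \leq \varphi(x^0)}$ and \Cref{Ass:LevelSets} gives $\|x^k - x_\star\| \leq R$ for any $x_\star \in \argmin\varphi$. Combining the descent inequality enforced in \Cref{step:GlobalCheck} with the selection criterion of \Cref{step:GlobalStepsize} yields the key chain
\[
	\varphi(x^{k+1})
{}\leq{}
	\varphi_{\gamma_k}(w^k)
{}\leq{}
	\varphi_{\gamma_k}(x^k).
\]

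To bound the right-hand side, I evaluate $\varphi_{\gamma_k}(x^k) \leq \ell_\varphi(u, x^k) + \tfrac{1}{2\gamma_k}\|u - x^k\|^2$ at the test point $u = (1-\theta)x^k + \theta x_\star$ with $\theta \in [0,1]$. Since $g$ is convex, $\ell_\varphi(\cdot, x^k)$ is convex in its first argument, so Jensen's inequality gives $\ell_\varphi(u, x^k) \leq (1-\theta)\ell_\varphi(x^k, x^k) + \theta\,\ell_\varphi(x_\star, x^k)$. Noting $\ell_\varphi(x^k, x^k) = \varphi(x^k)$ and invoking convexity of $f$ to bound $\ell_\varphi(x_\star, x^k) \leq \varphi(x_\star)$, and writing $a_k \eqdef \varphi(x^k) - \inf\varphi$, I obtain
\[
	a_{k+1}
{}\leq{}
	(1-\theta)\,a_k + \tfrac{\theta^2 R^2}{2\gamma_k}
\quad\text{for every } \theta \in [0,1].
\]

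Minimizing the RHS over $\theta$ gives a dichotomy: when $a_k \leq R^2/\gamma_k$ the optimum $\theta^\star = \gamma_k a_k/R^2$ is feasible and yields $a_{k+1} \leq a_k - \tfrac{\gamma_k a_k^2}{2R^2}$, while otherwise $\theta = 1$ is optimal and yields $a_{k+1} \leq R^2/(2\gamma_k)$. Case 1 of the statement is precisely the second subcase applied at $k=0$. For Case 2 ($a_0 < R^2/\gamma_0$), I argue inductively using the monotonicity $\gamma_{k+1} \leq \gamma_k$ from \Cref{lem:LowerBoundGamma}: if $a_k \leq R^2/\gamma_k$ then $a_{k+1} \leq a_k \leq R^2/\gamma_k \leq R^2/\gamma_{k+1}$, so the first subcase propagates. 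From $a_{k+1} \leq a_k(1 - \tfrac{\gamma_\infty a_k}{2R^2})$ and the elementary bound $1/(1-x) \geq 1+x$ for $x\in[0,1)$ one gets $1/a_{k+1} \geq 1/a_k + \gamma_\infty/(2R^2)$; telescoping and then invoking $\gamma_\infty \geq \min\set{}{\gamma_0, \sigma(1-\beta)/L_f}$ from \Cref{lem:LowerBoundGamma} delivers the claimed $a_k \leq 2R^2/(k\gamma_\infty)$.

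The delicate step is setting up the interpolation inequality: one must exploit simultaneously that the partial linearization $\ell_\varphi(\cdot, x^k)$ is convex in its first argument (automatic from convexity of $g$) and, separately, that convexity of $f$ provides $\ell_\varphi(x_\star, x^k) \leq \varphi_\star$. Once this is in place, the remainder mirrors the classical FBS sublinear rate argument, the only novelty being the dichotomy produced by the adaptive stepsize, which is neatly absorbed by the free parameter $\theta$ in the interpolation.
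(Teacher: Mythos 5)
Your proof is correct and follows essentially the same route as the paper's: both reduce to the recursion $\varphi(x^{k+1})-\inf\varphi\leq(1-\theta)(\varphi(x^k)-\inf\varphi)+\tfrac{\theta^2R^2}{2\gamma_k}$ via the chain $\varphi(x^{k+1})\leq\varphi_{\gamma_k}(w^k)\leq\varphi_{\gamma_k}(x^k)$ and an interpolation toward $x_\star$, then handle the same dichotomy and telescope the reciprocals. The only cosmetic difference is that the paper obtains the interpolation bound by passing through $\varphi_\gamma\leq\varphi^\gamma$ (\Cref{it:bndUpCvx}) and convexity of $\varphi$, whereas you work directly inside the minimization \eqref{eq:FBEmin} using convexity of $\ell_\varphi(\cdot,x^k)$ and of $f$ at the endpoint — the same facts in a different order.
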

\begin{proof}
See \Cref{Proof:th:RateGlobalConv}.
\end{proof}

In the following result we see that the convergence rate of \(\seq{x^k}\) is linear when close to a strong local minimum.
\begin{thm}[Local linear convergence]\label{thm:LocalLinConvCVX}
Suppose that \Cref{Ass:fg} is satisfied.
Suppose further that \(f\) is convex and that $x_\star$ is a strong (global) minimum of $\varphi$, \ie, there exist a neighborhood $N$ of $x_\star$ and $c>0$ such that
\begin{equation}\label{eq:SOSCphi}
	\varphi(x) - \varphi(x_\star) \geq \tfrac{c}{2}\|x-x_\star\|^2,\quad \forall x\in N.
\end{equation}
Then, there is $k_0\geq 0$ such that the subsequences \(\seq{\varphi(x^k)}[k\geq k_0]\) and \(\seq{\varphi_{\gamma_k}(w^k)}[k\geq k_0]\)
produced by \Cref{alg:Global} converge $Q$-linearly to $\varphi(x_\star)$ with factor $\omega$,%
\footnote{We say that \(\seq{x^k}\) converges linearly to \(x_\star\) with factor \(\omega\) if \(\|x^{k+1}-x_\star\|\leq\omega\|x^k-x_\star\|\) for all \(k\).}
and $\seq{x^k}[k\geq k_0]$ converges $Q$-linearly to $x_\star$ with factor $\sqrt{\omega}$,
where
\[
	\omega
{}\leq{}
	\max\set{}{
		\tfrac12,
		1-\tfrac c4 \min\set{}{
			\gamma_0,\sigma(1-\beta)/(4L_f)
		}
	}
{}\in{}
	\bigl[1/2, 1\bigr).
\]
Moreover, if $x_\star$ is a strong (global) minimum for $\varphi_{\gamma_\infty}$, where \(\gamma_\infty\) is as in \Cref{lem:LowerBoundGamma}, then also $w^k \to x_\star$ with local linear rate.
\begin{proof}
See \Cref{Proof:thm:LocalLinConvCVX}.
\end{proof}
\end{thm}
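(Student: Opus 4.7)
The plan is to combine two ingredients: the $\varphi$-descent chain
$\varphi(x^{k+1})\leq\varphi_{\gamma_k}(w^k)\leq\varphi_{\gamma_k}(x^k)$
(which follows from \cref{step:GlobalCheck} together with \Cref{prop:LowBnd}, and from the line-search descent in \cref{step:GlobalStepsize}), and a one-step contraction comparing $\varphi_\gamma-\varphi(x_\star)$ to $\varphi-\varphi(x_\star)$ whenever~\eqref{eq:SOSCphi} applies. Before doing either, I must ensure that the iterates eventually enter $N$.

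Convexity of $\varphi=f+g$ combined with~\eqref{eq:SOSCphi} forces $x_\star$ to be the unique minimizer of $\varphi$; a standard recession-cone argument then shows $\varphi$ is coercive, so the level set $\set{}{\varphi\leq\varphi(x^0)}$ is bounded. By \Cref{prop:FunDecr} the iterates stay in this set, hence $\seq{x^k}$ is bounded, and by \Cref{prop:Rxl2,prop:ClusterCritical} every cluster point lies in $\zer\partial\varphi=\set{}{x_\star}$. Therefore $x^k\to x_\star$ and there exists $k_0$ with $x^k\in N$ for all $k\geq k_0$.

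The heart of the proof is the following interpolation inequality, valid for convex $f$, any $x\in\Re^n$, $\gamma>0$, and $\alpha\in[0,1]$:
\[
\varphi_\gamma(x)-\varphi(x_\star)\leq (1-\alpha)\bigl(\varphi(x)-\varphi(x_\star)\bigr)+\tfrac{\alpha^2}{2\gamma}\|x-x_\star\|^2.
\]
I would prove it by inserting $u=\alpha x_\star+(1-\alpha)x$ into the minimization~\eqref{eq:FBEmin} defining $\varphi_\gamma(x)$: convexity of $f$ gives $\ell_\varphi(u,x)\leq\varphi(u)$, convexity of $\varphi$ gives $\varphi(u)\leq\alpha\varphi(x_\star)+(1-\alpha)\varphi(x)$, and $\|u-x\|=\alpha\|x-x_\star\|$. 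For $x\in N$, applying~\eqref{eq:SOSCphi} to replace $\|x-x_\star\|^2$ and minimizing the resulting coefficient over $\alpha\in[0,1]$ (optimum $\alpha^\star=\min\set{}{1,\gamma c/2}$) collapses the bound to $\max\set{}{\tfrac12,\,1-\tfrac{\gamma c}{4}}\cdot(\varphi(x)-\varphi(x_\star))$. The claimed constant $\omega$ then follows by inserting the lower bound on $\gamma_k$ from \Cref{lem:LowerBoundGamma}.

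The two $Q$-linear rates in the theorem drop out at once: applying the contraction at $x=x^k$ with $\gamma=\gamma_k$ and chaining with $\varphi(x^{k+1})\leq\varphi_{\gamma_k}(w^k)\leq\varphi_{\gamma_k}(x^k)$ gives the rate for $\varphi(x^k)$, while applying it at $x=x^{k+1}$ together with $\varphi_{\gamma_{k+1}}(w^{k+1})\leq\varphi_{\gamma_{k+1}}(x^{k+1})$ and $\varphi(x^{k+1})\leq\varphi_{\gamma_k}(w^k)$ gives the rate for $\varphi_{\gamma_k}(w^k)$. Finally,~\eqref{eq:SOSCphi} turns the $Q$-linear decay of $\varphi(x^k)-\varphi(x_\star)$ into decay of $\|x^k-x_\star\|$ with factor $\sqrt\omega$; the analogous bound for $w^k$ requires the extra strong-minimum hypothesis on $\varphi_{\gamma_\infty}$. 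The main difficulty is really the first step: guaranteeing that the iterates reach and stay inside $N$ so that the interpolation inequality can be invoked, after which the rest is routine bookkeeping via \Cref{lem:LowerBoundGamma}.
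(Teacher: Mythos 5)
Your proposal is correct and follows essentially the same route as the paper: establish $x^k\to x_\star$ and eventual membership in $N$, bound $\varphi_{\gamma_k}(x^k)$ by evaluating the FBE's defining minimization at $u=\alpha x_\star+(1-\alpha)x^k$ (the paper does this through the intermediate bound $\varphi_\gamma\leq\varphi^\gamma$ of \Cref{it:bndUpCvx}, but the computation is identical), optimize over $\alpha$ using convexity and \eqref{eq:SOSCphi}, and chain with $\varphi(x^{k+1})\leq\varphi_{\gamma_k}(w^k)\leq\varphi_{\gamma_k}(x^k)$ from \eqref{eq:ChainDecr}. The only cosmetic difference is that you derive the interpolation inequality directly from \eqref{eq:FBEmin} rather than via the Moreau envelope.
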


The introduction of \(\gamma_\infty\) in the statement above is due to the fact that \(\gamma_k\) may vary over the iterations.
However, under the assumptions of \Cref{thm:2ndOrder} and if $\gamma_\infty < 1/L_f$, then the requirement of \(x_\star\) to be a strong local minimizer for \(\varphi_{\gamma_\infty}\) is superfluous, as it is already implied by strong local minimimality of $x_\star$ for $\varphi$.

\begin{cor}[Global linear convergence]\label{cor:GlobalLinConvCVX}
Suppose that \Cref{Ass:fg} is satisfied, that \(f\) is convex and that \(\varphi\) is strongly convex (e.g. if \(f\) is strongly convex).
Then, the sequences $\seq{x^k}$, $\seq{\varphi(x^k)}$ and $\seq{\varphi_{\gamma_k}(w^k)}$ generated by \Cref{alg:Global}
converge with global linear rate.
\begin{proof}
In this case \Cref{thm:LocalLinConvCVX} applies with $N = \Re^n$, $c = \mu_\varphi$ (the convexity modulus of $\varphi$)
and $k_0 = 0$.
\end{proof}
\end{cor}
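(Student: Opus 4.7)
The plan is to reduce the corollary to a direct application of Theorem \ref{thm:LocalLinConvCVX} by verifying that its local hypothesis becomes a global one under strong convexity. Specifically, if $\varphi$ is $\mu_\varphi$-strongly convex with $\mu_\varphi > 0$, then it admits a unique minimizer $x_\star$ satisfying the standard quadratic growth inequality
\[
	\varphi(x) - \varphi(x_\star)
{}\geq{}
	\tfrac{\mu_\varphi}{2}\|x-x_\star\|^2
	\qquad\text{for all }x\in\Re^n.
\]
Hence hypothesis \eqref{eq:SOSCphi} of Theorem \ref{thm:LocalLinConvCVX} holds with the neighborhood $N$ taken to be the entire $\Re^n$ and $c = \mu_\varphi$. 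As a consequence, the integer $k_0$ in the conclusion of the theorem can be set to $0$: no iterate needs to enter a proper neighborhood of $x_\star$ for the contraction to take effect.

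Applying Theorem \ref{thm:LocalLinConvCVX} with these choices then yields immediately the $Q$-linear convergence of $\seq{\varphi(x^k)}$ and $\seq{\varphi_{\gamma_k}(w^k)}$ to $\varphi(x_\star)$ from $k=0$ with factor
\[
	\omega
{}\leq{}
	\max\set{}{
		\tfrac12,
		1-\tfrac{\mu_\varphi}{4} \min\set{}{
			\gamma_0,\sigma(1-\beta)/(4L_f)
		}
	},
\]
and the corresponding $Q$-linear convergence of $\seq{x^k}$ to $x_\star$ with factor $\sqrt{\omega}$. Since these linear rates are uniform in $k$ from the initialization, the sequences converge globally linearly, which is exactly the corollary's claim. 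No genuine obstacle arises: the only thing to check is that strong convexity of $\varphi$ (implied, for instance, by strong convexity of $f$, given convexity of $g$) supplies the global version of the strong-minimum condition, which is immediate from the definition.
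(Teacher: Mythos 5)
Your proposal is correct and is essentially identical to the paper's own (one-line) proof: both invoke \Cref{thm:LocalLinConvCVX} with $N=\Re^n$, $c=\mu_\varphi$ and $k_0=0$, using the fact that strong convexity gives the quadratic growth condition \eqref{eq:SOSCphi} globally. You merely spell out the details the paper leaves implicit, and you correctly note that the corollary only claims linear convergence of $\seq{\varphi_{\gamma_k}(w^k)}$ (not of $\seq{w^k}$), so the theorem's extra hypothesis on $\varphi_{\gamma_\infty}$ is not needed.
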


		\subsection{Convergence under KL assumption}
			\label{SUBSEC:KL}
			We now analyze the convergence of the iterates of \Cref{alg:Global} to a critical point
under the assumption that $\varphi$ satisfies the Kurdyka-{\L}ojasiewicz (KL) property
\cite{lojasiewicz1963propriete,lojasiewicz1993geometrie,kurdyka1998gradients}.
For related works exploiting this property in proving convergence of optimization
algorithms such as FBS we refer the reader to \cite{Attouch2009,Attouch2010,Attouch2013,bolte2014proximal,Ochs2014}.

\begin{defin}[KL property {\cite[Def. 3]{bolte2014proximal}}]\label{def:KLproperty}
A proper lower semi-continuous function $\varphi:\Re^n\to\Rinf$ has the Kurdyka-{\L}ojasiewicz property (KL) at $x_\star\in\dom\partial\varphi$ if there exist $\eta\in (0,+\infty]$, a neighborhood $U$ of $x_\star$, and a continuous concave function $\psi:[0,\eta]\to[0,+\infty)$ such that:
\begin{enumdefin}
	\item\label{Def:KL1} $\psi(0)=0$, 
	\item\label{Def:KL2} $\psi$ is $\cont^1$ on $(0,\eta)$, 
	\item\label{Def:KL3} $\psi'(s)>0$ for all  $s\in (0,\eta)$,
	\item\label{it:KLinequality} for every $x\in U\cap\set{x\in\Re^n}{\varphi(x_\star)<\varphi(x)\leq\varphi(x_\star)+\eta}$,
	$$\psi'(\varphi(x)-\varphi(x_\star))\dist(0,\partial\varphi(x))\geq 1.$$
\end{enumdefin}
We say that \(\varphi\) has the KL property on \(S\subseteq\Re^n\) it has the KL property on every \(x\in S\).
\end{defin}

Function $\psi$ in the previous definition is usually called \emph{desingularizing function}.
All subanalytic functions which are continuous over their domain have the KL property \cite{Bolte2007}.
Under the KL assumption we are able to prove the following convergence result.
Once again, we remark that such property is required on the original function \(\varphi\), rather than on the surrogate \(\varphi_\gamma\).

\begin{thm}\label{thm:Convergence}
Suppose that \Cref{{Ass:fg},{Ass:LevelSets}} are satisfied, and that $\varphi$ satisfies the KL property on \(\omega(x^0)\) (e.g., if it has it on \(\zer\partial\varphi\)).
Suppose further that in \Cref{alg:Global} $\beta>0$, and that there exist $\bar{\tau},c>0$ such that
$\tau_k\leq\bar{\tau}$ and $\|d^k\|\leq c\|R_{\gamma_k}(x^k)\|$ for all \(k\in\Nn\).
Then, the sequence of iterates $\seq{x^k}$ is either finite and ends with $R_{\gamma_k}(x^k)=0$, or converges to a critical point $x_\star$ of $\varphi$.
\end{thm}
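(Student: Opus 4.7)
The plan is to deploy the standard three-ingredient Kurdyka-\L ojasiewicz framework (à la Attouch-Bolte-Svaiter): sufficient decrease, a relative-error bound on the subdifferential, and a continuity property of the objective on the limit set, then combine them via the uniform KL property on \(\omega(x^0)\).

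First, I would set the stage. By \Cref{prop:FunDecr}, \(\seq{\varphi(x^k)}\) is nonincreasing, so all iterates lie in the closed (by lsc of \(\varphi\)) and bounded level set \(\set{}{\varphi\leq\varphi(x^0)}\); hence \(\seq{x^k}\) is bounded, \(\omega(x^0)\) is nonempty compact, \(\varphi(x^k)\downarrow\varphi_\star\) for some finite \(\varphi_\star\), and \Cref{prop:ClusterCritical} gives \(\omega(x^0)\subseteq\zer\partial\varphi\). The bound \(\|w^k-x^k\|\leq\bar\tau c\|R_{\gamma_k}(x^k)\|\) together with \(\|x^{k+1}-w^k\|=\gamma_k\|R_{\gamma_k}(w^k)\|\) and \Cref{{prop:Rxl2},{prop:Rwl2}} shows \(\seq{x^k}\), \(\seq{w^k}\), and \(\seq{x^{k+1}}\) share the same cluster set. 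Assuming the sequence is infinite (otherwise the claim is trivial), I would verify that \(\varphi\equiv\varphi_\star\) on \(\omega(x^0)\): along \(x^{k_j}\to x_\star\), testing the prox inequality satisfied by \(x^{k_j}=T_{\gamma_{k_j-1}}(w^{k_j-1})\) against \(x_\star\) yields \(\limsup g(x^{k_j})\leq g(x_\star)\) which, combined with lsc, gives \(\varphi(x^{k_j})\to\varphi(x_\star)=\varphi_\star\).

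Next, the two key inequalities. Sufficient decrease comes directly from \Cref{prop:FunDecr}:
\[
\varphi(x^k)-\varphi(x^{k+1}) \geq \tfrac{\gamma_\infty}{2}\bigl(\|R_{\gamma_k}(x^k)\|^2 + \beta\|R_{\gamma_k}(w^k)\|^2\bigr).
\]
For the relative-error bound, optimality of the proximal step defining \(x^{k+1}=T_{\gamma_k}(w^k)\) gives \(R_{\gamma_k}(w^k)-\nabla f(w^k)\in\partial g(x^{k+1})\), whence \(R_{\gamma_k}(w^k)+\nabla f(x^{k+1})-\nabla f(w^k)\in\partial\varphi(x^{k+1})\) and \(\dist(0,\partial\varphi(x^{k+1}))\leq(1+L_f\gamma_k)\|R_{\gamma_k}(w^k)\|\) by Lipschitz continuity of \(\nabla f\). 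The step length is controlled by \(\|x^{k+1}-x^k\|\leq\bar\tau c\|R_{\gamma_k}(x^k)\|+\gamma_k\|R_{\gamma_k}(w^k)\|\) using the hypothesis \(\|d^k\|\leq c\|R_{\gamma_k}(x^k)\|\) and \(\tau_k\leq\bar\tau\).

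Finally, I would invoke the uniform KL property on the compact set \(\omega(x^0)\) (on which \(\varphi\) is constant), producing one desingularizing function \(\psi\) valid on an \(\epsilon\)-neighborhood of \(\omega(x^0)\); for all large \(k\), \(x^k\) lies in that neighborhood. Concavity of \(\psi\) and the KL inequality give, with \(\Delta_k := \psi(\varphi(x^k)-\varphi_\star)-\psi(\varphi(x^{k+1})-\varphi_\star)\),
\[
\|R_{\gamma_k}(x^k)\|^2 + \beta\|R_{\gamma_k}(w^k)\|^2 \leq C\,\Delta_k\,\|R_{\gamma_{k-1}}(w^{k-1})\|
\]
for some constant \(C\) absorbing \(\gamma_\infty^{-1}\), \(L_f\), and \(1/\gamma_\infty\). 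A Young-type inequality \(\sqrt{ab}\leq\eta a+\tfrac{1}{4\eta}b\) with a suitable \(\eta\) absorbs half of \(\|R_{\gamma_{k-1}}(w^{k-1})\|\), and telescoping \(\sum_k\Delta_k\leq\psi(\varphi(x^{k_0})-\varphi_\star)\) yields summability of both \(\sum\|R_{\gamma_k}(w^k)\|\) and \(\sum\|R_{\gamma_k}(x^k)\|\). The length bound then gives \(\sum\|x^{k+1}-x^k\|<\infty\), so \(\seq{x^k}\) is Cauchy and converges to some \(x_\star\in\omega(x^0)\subseteq\zer\partial\varphi\). The hardest steps I expect are (i) transferring function values to cluster points despite \(g\) being only lsc, and (ii) the AM-GM absorption handling the index mismatch between the descent estimate at \(k\) and the subdifferential estimate requiring \(R_{\gamma_{k-1}}(w^{k-1})\).
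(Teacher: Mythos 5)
Your proposal follows essentially the same route as the paper's proof: sufficient decrease from \Cref{prop:FunDecr}, the relative-error subgradient $\nabla f(x^{k+1})-\nabla f(w^k)+R_{\gamma_k}(w^k)\in\partial\varphi(x^{k+1})$, the uniformized KL property on the compact connected set $\omega(x^0)$ (where constancy of $\varphi$ is delegated in the paper to a lemma of Bolte et al.\ rather than re-derived from the prox inequality), and the concavity/AM--GM absorption that the paper packages as \Cref{le:finiteLength} to get $\sum_k\|R_{\gamma_k}(w^k)\|<\infty$ and hence the Cauchy property via \eqref{eq:diffx2Rw}. The index mismatch you flag is handled in exactly the way you anticipate, so the plan is correct and matches the paper's argument.
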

\begin{proof}
See \Cref{Proof:thm:Convergence}.
\end{proof}

In case where $\varphi$ is subanalytic, the desingularizing function can be taken of
the form $\psi(s) = \sigma s^{1-\theta}$, for $\sigma > 0$
and $\theta \in [0, 1)$ \cite{Bolte2007}. In this case, the condition in \Cref{it:KLinequality}
is referred to as {\L}ojasiewicz inequality. Depending on the value of $\theta$ we can
derive local convergence rates for \Cref{alg:Global}.

\begin{thm}[Local linear convergence]\label{thm:LocalLinConvKL}
Suppose that \Cref{{Ass:fg},{Ass:LevelSets}} are satisfied, and that $\varphi$ satisfies the KL property on \(\omega(x^0)\) (e.g., if it has it on \(\zer\partial\varphi\)) with
\begin{equation}\label{eq:LojasiewiczProperty}
	\psi(s) = \sigma s^{1-\theta}
\qquad\text{for some}~~
	\sigma>0
	~~\text{and}~~
	\theta \in (0, \tfrac12].
\end{equation}
Suppose further that in \Cref{alg:Global} $\beta>0$, and that there exist $\bar{\tau},c>0$ such that
$\tau_k\leq\bar{\tau}$ and $\|d^k\|\leq c\|R_{\gamma_k}(x^k)\|$ for all \(k\in\Nn\).
Then, the sequence of iterates $\seq{x^k}$ converges to a point $x_\star\in\zer\partial\varphi$ with $R$-linear rate.
\begin{proof}
See \Cref{Proof:thm:LocalLinConvKL}.
\end{proof}
\end{thm}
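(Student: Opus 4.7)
The plan is to leverage the Kurdyka-{\L}ojasiewicz inequality with the explicit desingularizing function $\psi(s) = \sigma s^{1-\theta}$ to upgrade the global convergence result of \Cref{thm:Convergence} into an explicit R-linear rate. Since the hypotheses of the present theorem already imply those of \Cref{thm:Convergence}, I first invoke it to secure a limit point $x_\star\in\zer\partial\varphi$ with $x^k\to x_\star$, and define $a_k := \varphi(x^k)-\varphi(x_\star)\geq 0$, which is nonincreasing by \Cref{prop:FunDecr}. I may assume $a_k>0$ for all $k$, for otherwise the sequence terminates at a stationary point. My strategy is the now-standard two-ingredient KL recipe: a \emph{sufficient decrease} and a \emph{subgradient relative-error} estimate, which combined with the restriction $\theta\leq 1/2$ produce Q-linear decay of $\{a_k\}$, from which R-linear decay of $\{x^k-x_\star\}$ follows by telescoping.

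The sufficient-decrease inequality is immediate from \Cref{prop:FunDecr} together with the uniform lower bound $\gamma_k\geq \gamma_\infty>0$ of \Cref{lem:LowerBoundGamma}, giving $a_k - a_{k+1} \geq \tfrac{\gamma_\infty}{2}(\beta\|R_{\gamma_k}(w^k)\|^2 + \|R_{\gamma_k}(x^k)\|^2)$. For the relative-error bound I use the optimality condition defining $T_{\gamma_k}(w^k)$, namely $R_{\gamma_k}(w^k)-\nabla f(w^k)\in\partial g(x^{k+1})$; adding $\nabla f(x^{k+1})$, invoking $L_f$-Lipschitzness of $\nabla f$, and using $\|x^{k+1}-w^k\|=\gamma_k\|R_{\gamma_k}(w^k)\|$ yields $\dist(0,\partial\varphi(x^{k+1}))\leq(1+\gamma_0 L_f)\|R_{\gamma_k}(w^k)\|$.

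Because $x^k\to x_\star$ and $a_k\downarrow 0$, all but finitely many $x^{k+1}$ lie in the KL neighborhood of $x_\star$, where the {\L}ojasiewicz inequality reads $\sigma(1-\theta)a_{k+1}^{-\theta}\dist(0,\partial\varphi(x^{k+1}))\geq 1$; squaring and combining with the relative-error bound gives $a_{k+1}^{2\theta}\leq C_1\|R_{\gamma_k}(w^k)\|^2$. The restriction $\theta\leq 1/2$ is precisely what makes this usable: when $\theta=1/2$ the inequality is already $a_{k+1}\leq C_1\|R_{\gamma_k}(w^k)\|^2$, and when $\theta<1/2$ the same conclusion follows for $k$ large enough that $a_{k+1}\leq 1$, using $a_{k+1}\leq a_{k+1}^{2\theta}$. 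Feeding this back into the sufficient decrease (and using $\beta>0$) yields $a_{k+1}\leq a_k - (\beta\gamma_\infty/(2C_1))\,a_{k+1}$, i.e., $a_{k+1}\leq\rho\, a_k$ for some $\rho\in(0,1)$, establishing Q-linear decay of the function residuals.

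For the iterates, the assumptions $\tau_k\leq\bar\tau$ and $\|d^k\|\leq c\|R_{\gamma_k}(x^k)\|$ give $\|w^k-x^k\|\leq\bar\tau c\|R_{\gamma_k}(x^k)\|$, and combined with $\|x^{k+1}-w^k\|\leq\gamma_0\|R_{\gamma_k}(w^k)\|$ this controls $\|x^{k+1}-x^k\|$ by a linear combination of the two residual norms. The sufficient-decrease estimate bounds each of them by $C_2\sqrt{a_k-a_{k+1}}\leq C_2\sqrt{a_k}$, and the Q-linear rate on $\{a_k\}$ then gives $\|x^{k+1}-x^k\|\leq C_3(\sqrt\rho)^{\,k}$. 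A telescoping sum over tails delivers $\|x^k-x_\star\|\leq C_4(\sqrt\rho)^{\,k}$, which is the claimed R-linear convergence. The most delicate point is the $\theta<1/2$ case, where the reduction to $a_{k+1}\leq C_1\|R_{\gamma_k}(w^k)\|^2$ is valid only once $a_{k+1}\leq 1$ and $x^{k+1}$ lies in the KL neighborhood; both conditions are ensured uniformly for large $k$ by the already-established $x^k\to x_\star$ and $a_k\downarrow 0$.
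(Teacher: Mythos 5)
Your proof is correct, but it follows a genuinely different route from the paper's. Both arguments share the same raw ingredients --- convergence to $x_\star$ via \Cref{thm:Convergence}, the subgradient estimate $\dist(0,\partial\varphi(x^{k+1}))\leq(1+\gamma L_f)\|R_{\gamma_k}(w^k)\|$, the sufficient decrease of \Cref{prop:FunDecr}, and the exponent restriction $\theta\leq\tfrac12$ --- but you combine them in the ``textbook'' order: square the {\L}ojasiewicz inequality to get $a_{k+1}^{2\theta}\leq C_1\|R_{\gamma_k}(w^k)\|^2$, feed it into the decrease estimate to obtain $Q$-linear decay of the objective gaps $a_k$, then bound each step by $C\sqrt{a_k-a_{k+1}}$ and telescope. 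The paper instead never establishes a rate on $\varphi(x^k)$: it works directly with the tail sums $B_k=\sum_{i\geq k}\|R_\gamma(w^i)\|$, reuses the recursion $\|R_\gamma(w^k)\|^2\leq\alpha(\Delta_k-\Delta_{k+1})\|R_\gamma(w^{k-1})\|$ and \Cref{le:finiteLength} from the proof of \Cref{thm:Convergence} to get $B_k\leq(B_{k-1}-B_k)+\sigma a_k^{1-\theta}$, and then invokes the KL inequality raised to the power $(1-\theta)/\theta\geq1$ to conclude $B_k\leq C(B_{k-1}-B_k)$, hence geometric decay of $B_k\geq\|x^k-x_\star\|$. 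Your version is more self-contained and yields an explicit contraction factor $\rho$ for the objective values as a by-product; the paper's version is more economical in that it recycles the machinery already set up for the global convergence theorem and extends naturally to deduce the linear rate of $\seq{w^k}$ as well. Two small points you gloss over, both benign: the claim $a_k\downarrow0$ requires the sandwich $\varphi(x^{k+1})\leq\varphi_{\gamma_k}(x^k)\leq\varphi(x^k)$ together with continuity of $\varphi_{\gamma_\infty}$ and $\varphi_{\gamma_\infty}(x_\star)=\varphi(x_\star)$ (the paper is equally terse here), and the degenerate case $a_{k+1}=0$ forces $R_{\gamma_j}(x^j)=0$ for some $j$ via \Cref{prop:FunDecr}, so the sequence is indeed finite there, as you assert.
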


	\section{Quasi-Newton methods}
		\label{SEC:QuasiNewton}
		We now turn our attention to choices of the direction $d^k$ in \Cref{alg:Global}.
Applying classical quasi-Newton methods \cite{dennis1977quasi} to the problem of minimizing $\varphi_\gamma$ yields,
starting from a given $x^0$,
\begin{align*}
d^k &= -B_k^{-1}\nabla\varphi_\gamma(x^k),\\
x^{k+1} &= x^k+\tau_k d^k,
\end{align*}
where $B_k$ is nonsingular and chosen so as to approximate (in some sense) the Hessian of $\varphi_\gamma$ at $x^k$,
and stepsize $\tau_k>0$ is selected with a line-search procedure enforcing a sufficient decrease condition.
However, the convergence properties of
quasi-Newton methods are quite restrictive. The BFGS algorithm
is guaranteed to converge, in the sense that
$$\liminf_{k\to\infty}\|\nabla\varphi_\gamma(x^k)\|=0,$$
when the objective is convex \cite{powell1976some}.
Its limited memory variant, L-BFGS, requires strong convexity to guarantee convergence,
and in that case the cost is shown to converge $R$-linearly to the optimal value \cite{liu1989largescale}.
%Both requirements are only met in the cases where $f$ in
%problem~\eqref{eq:GenProb} is quadratic, in which case $\varphi_\gamma$ is convex, see \Cref{th:ProxPropQuad}.
Moreover, there exist examples of nonconvex function for which
quasi-Newton methods need not converge to critical points
\cite{Dai2002,Mascarenhas2004,Mascarenhas2007,Dai2013}.
To overcome this, we consider quasi-Newton directions in the setting of \Cref{alg:Global}.

The resulting methods enjoy the same global convergence properties illustrated in \Cref{SEC:Algorithms} and superlinear asymptotic convergence under standard assumptions:
we will assume, as it is usual, (strict) differentiability of $\nabla\varphi_\gamma$ and nonsingularity of $\nabla^2\varphi_\gamma$ at a critical point.
Properties of $f$ and $g$ that guarantee these requirements were discussed in \Cref{thm:2ndOrder,thm:DiffGradFBE}: if \(\gamma=\gamma_\infty\) is as in \Cref{lem:LowerBoundGamma}, then (strict) differentiability of \(\nabla\varphi_\gamma\) at \(x_\star\in\zer\partial\varphi\) and positive definiteness of \(\nabla^2\varphi_\gamma(x_\star)\) are ensured if \Cref{Ass:GenQuad} (\Cref{Ass:Strict2Epi}) holds, \(x_\star\) is a strong local minimum for \(\varphi\), and \(\gamma<1/L_f\).

The following result gives for the proposed algorithmic scheme the analogous of the Dennis-Mor\'e condition,
see \cite[Thm. 2.2]{dennis1974characterization} and \cite[Thm. 3.3]{ip1992local}.
Differently from the cited results, we fit the analysis to our algorithmic framework where an additional forward-backward step is operated.
Furthermore, in \Cref{thm:SuperlinearConvergence2} we will see how achieving superlinear convergence is possible without the need to ensure \emph{sufficient} decrease in the objective, or even to consider direction of strict descent, but simply with the nonincrease conditions of steps~\ref{step:GlobalDirection} and~\ref{step:GlobalStepsize}.
This contrasts with the usual requirements of classical line-search methods,
where instead a sufficient decrease must be enforced in order for the sequence of iterates to converge.
In \Cref{alg:Global}, in fact, such decrease is guaranteed by the final update in step \ref{step:GlobalFB}.

\begin{thm}\label{thm:SuperlinearConvergence1}
Suppose that \Cref{Ass:fg} is satisfied, and let $\gamma>0$.
Suppose that \(\nabla\varphi_\gamma\) is strictly differentiable at \(x_\star\), and that \(\nabla^2\varphi_\gamma(x_\star)\) is nonsingular.
Let $\seq{B_k}$ be a sequence of nonsingular \(\Re^{n\times n}\)-matrices and suppose that for some $x^0\in\Re^n$ the sequences $\seq{x^k}$ and $\seq{w^k}$ generated by
\begin{align*}
	w^k
{}={}
	x^k - B_k^{-1}\nabla\varphi_\gamma(x^k)
\quad\text{and}\quad
	x^{k+1}
{}={}
	T_\gamma(w^k)
\end{align*}
converge to $x_\star$.
If $x^k, w^k\notin \zer\partial\varphi$ for all $k\geq 0$ and
\begin{equation}\label{eq:DennisMoreCondition}
	\lim_{k\to\infty}{
		\frac{
			\|(B_k - \nabla^2\varphi_\gamma(x_\star))(w^k-x^k)\|
		}{
			\|w^k-x^k\|
		}
	}
{}={}
	0,
\end{equation}
then $x^k\to x_\star$ $Q$-superlinearly, and $w^k\to x_\star$ $R$-superlinearly.
\end{thm}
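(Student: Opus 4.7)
The plan is to adapt the classical Dennis-Mor\'e argument~\cite{dennis1974characterization} for quasi-Newton methods to our setting, the novelty being the extra forward-backward step $x^{k+1}=T_\gamma(w^k)$. Two observations are crucial: (i) $x_\star$ is a fixed point of $T_\gamma$ by~\eqref{eq:StatCond}, and (ii) $T_\gamma=\prox_{\gamma g}\circ(\id-\gamma\nabla f)$ is globally Lipschitz continuous with constant at most $1+\gamma L_f$, since $\prox_{\gamma g}$ is nonexpansive and $\nabla f$ is $L_f$-Lipschitz.

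My first step would be the standard Dennis-Mor\'e manipulation. Combining the defining relation $-B_k(w^k-x^k)=\nabla\varphi_\gamma(x^k)$ with the expansion coming from strict differentiability of $\nabla\varphi_\gamma$ at $x_\star$ applied to the pair $(x^k,w^k)$, I would obtain
\[
	\nabla\varphi_\gamma(w^k)
{}={}
	-(B_k-\nabla^2\varphi_\gamma(x_\star))(w^k-x^k)+o(\|w^k-x^k\|),
\]
so that \eqref{eq:DennisMoreCondition} yields $\|\nabla\varphi_\gamma(w^k)\|=o(\|w^k-x^k\|)$. Since $\nabla\varphi_\gamma(x_\star)=0$ and $\nabla^2\varphi_\gamma(x_\star)$ is invertible, differentiability of $\nabla\varphi_\gamma$ at $x_\star$ gives a local lower bound $\|\nabla\varphi_\gamma(y)\|\geq c\|y-x_\star\|$, which applied at $y=w^k$ provides $\|w^k-x_\star\|=o(\|w^k-x^k\|)$. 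A triangle-inequality argument $\|w^k-x^k\|\leq\|w^k-x_\star\|+\|x^k-x_\star\|$ then absorbs the $o$-term and promotes this to $\|w^k-x_\star\|=o(\|x^k-x_\star\|)$ for $k$ large. Note that strict (rather than pointwise) differentiability of $\nabla\varphi_\gamma$ at $x_\star$ is essential here, because the expansion is evaluated simultaneously at the two distinct sequences $x^k$ and $w^k$.

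The second step is where the novelty lies. Using the Lipschitz property of $T_\gamma$ together with $T_\gamma(x_\star)=x_\star$,
\[
	\|x^{k+1}-x_\star\|
{}={}
	\|T_\gamma(w^k)-T_\gamma(x_\star)\|
{}\leq{}
	(1+\gamma L_f)\,\|w^k-x_\star\|
{}={}
	o(\|x^k-x_\star\|),
\]
which proves $Q$-superlinear convergence of $\seq{x^k}$. The $R$-superlinear convergence of $\seq{w^k}$ then follows at once from $\|w^k-x_\star\|=o(\|x^k-x_\star\|)$ and the $R$-superlinear rate just established for $\seq{x^k}$. The main obstacle, relative to the classical Dennis-Mor\'e theorem, is precisely this propagation through the nonsmooth composition $T_\gamma$: the iteration $x^{k+1}=T_\gamma(w^k)$ is not linear in $B_k^{-1}\nabla\varphi_\gamma(x^k)$, but fortunately nonexpansiveness of $\prox_{\gamma g}$ reduces the analysis to an elementary Lipschitz-continuity estimate around the fixed point $x_\star$.
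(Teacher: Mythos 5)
Your proposal is correct and follows essentially the same route as the paper's proof: the Dennis--Mor\'e manipulation combined with strict differentiability of $\nabla\varphi_\gamma$ at $x_\star$ to get $\|\nabla\varphi_\gamma(w^k)\|=o(\|w^k-x^k\|)$, the lower bound from nonsingularity of $\nabla^2\varphi_\gamma(x_\star)$ plus the triangle inequality to convert this into $\|w^k-x_\star\|=o(\|x^k-x_\star\|)$, and finally the $(1+\gamma L_f)$-Lipschitz estimate for $T_\gamma$ at its fixed point $x_\star$ to propagate superlinear convergence through the forward-backward step. No gaps.
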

\begin{proof}
See \Cref{Proof:thm:SuperlinearConvergence1}.
\end{proof}

To obtain superlinear convergence of \Cref{alg:Global} when quasi-Newton directions are used and condition \eqref{eq:DennisMoreCondition} on the sequence $\seq{B_k}$ holds, we must verify that eventually $\varphi_\gamma(x^k+d^k) \leq \varphi_\gamma(x^k)$, so that the stepsize $\tau_k = 1$ is accepted in step \ref{step:GlobalStepsize} and the iterations reduce to those described in \Cref{thm:SuperlinearConvergence1}.

\begin{thm}\label{thm:SuperlinearConvergence2}
Suppose that \Cref{Ass:fg} is satisfied, and that in \Cref{alg:Global} direction $d^k$ is set as
\[
	d^k = -B_k^{-1}\nabla\varphi_{\gamma_k}(x^k)
\]
for a sequence of nonsingular matrices $\seq{B_k}$ satisfying~\eqref{eq:DennisMoreCondition}, with $\tau_k = 1$ being tried first in step \ref{step:GlobalStepsize}.
Let \(\gamma=\gamma_\infty\) as in \Cref{lem:LowerBoundGamma}, and suppose further that the sequences $\seq{x^k}$ and $\seq{w^k}$
converge to a critical point $x_\star$ at which $\nabla\varphi_\gamma$ is continuously semidifferentiable with $\nabla^2\varphi_\gamma(x_\star)\succ 0$.
Then, $x^k\to x_\star$ $Q$-superlinearly and $w^k\to x_\star$ $R$-superlinearly.
\end{thm}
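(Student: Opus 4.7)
The plan is to reduce to \Cref{thm:SuperlinearConvergence1}. First, by \Cref{lem:LowerBoundGamma} the sequence $\seq{\gamma_k}$ is nonincreasing and stays above a positive constant, so it eventually takes the constant value $\gamma_\infty=\gamma$; from some index onward the line-search is therefore performed on the fixed envelope $\varphi_\gamma$.

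The heart of the argument is to show that the unit stepsize $\tau_k=1$ is eventually admissible at step~\ref{step:GlobalStepsize}, i.e., $\varphi_\gamma(x^k+d^k)\leq\varphi_\gamma(x^k)$ for all $k$ sufficiently large. Once this is in place, on the tail of the algorithm the iterations reduce to $w^k=x^k-B_k^{-1}\nabla\varphi_\gamma(x^k)$ followed by $x^{k+1}=T_\gamma(w^k)$, which is exactly the scheme of \Cref{thm:SuperlinearConvergence1}. Combined with the assumed convergence $x^k,w^k\to x_\star$, the Dennis--Mor\'e condition~\eqref{eq:DennisMoreCondition}, and the observation that continuous semidifferentiability of $\nabla\varphi_\gamma$ with linear nonsingular semiderivative $\nabla^2\varphi_\gamma(x_\star)$ implies strict differentiability at $x_\star$, one directly obtains the claimed $Q$-superlinear convergence of $\seq{x^k}$ and $R$-superlinear convergence of $\seq{w^k}$. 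The case where the algorithm terminates finitely through step~\ref{step:GlobalStopping} is trivial; otherwise $x^k,w^k\notin\zer\partial\varphi$ for every $k$, matching the non-triviality hypothesis of \Cref{thm:SuperlinearConvergence1}.

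To prove eventual unit-step acceptance I would run the classical Newton line-search argument. Writing $H_\star=\nabla^2\varphi_\gamma(x_\star)$, and using strict differentiability of $\nabla\varphi_\gamma$ at $x_\star$ to integrate along the segment $[x^k,x^k+d^k]$, a second-order expansion yields
\[
	\varphi_\gamma(x^k+d^k) - \varphi_\gamma(x^k)
{}={}
	\innprod{\nabla\varphi_\gamma(x^k)}{d^k}
	{}+{}
	\tfrac{1}{2}\innprod{d^k}{H_\star d^k}
	{}+{}
	o(\|d^k\|^2).
\]
Since $\nabla\varphi_\gamma(x^k)=-B_k d^k$ and, by~\eqref{eq:DennisMoreCondition}, $\innprod{B_k d^k}{d^k}=\innprod{H_\star d^k}{d^k}+o(\|d^k\|^2)$, this reduces to $-\tfrac{1}{2}\innprod{d^k}{H_\star d^k}+o(\|d^k\|^2)$. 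Positive definiteness of $H_\star$ then makes the right-hand side nonpositive for all $k$ large enough (strictly negative unless $d^k=0$).

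The main obstacle is justifying this second-order Taylor expansion without classical $C^2$ regularity of $\varphi_\gamma$. The argument hinges on the fact that continuous semidifferentiability of $\nabla\varphi_\gamma$ with linear semiderivative $H_\star$ upgrades to strict differentiability at $x_\star$, which in turn provides the uniform-in-$t$ remainder control required to integrate $\nabla\varphi_\gamma(x^k+td^k)-\nabla\varphi_\gamma(x^k)$ against $d^k$ and obtain the $o(\|d^k\|^2)$ term. I expect the needed upgrade to be already available from the background results recalled in \Cref{APP:Definitions}. Once this subtlety is cleared, the acceptance of $\tau_k=1$ and the superlinear rates follow as above by appeal to \Cref{thm:SuperlinearConvergence1}.
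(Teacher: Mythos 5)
Your proposal is correct and follows essentially the same route as the paper: prove that the unit step is eventually accepted by combining a second-order expansion of $\varphi_\gamma$ with the Dennis--Mor\'e condition \eqref{eq:DennisMoreCondition} and $\nabla^2\varphi_\gamma(x_\star)\succ 0$ to get $\varphi_\gamma(x^k+d^k)-\varphi_\gamma(x^k)\leq-\tfrac{\eta}{2}\|d^k\|^2$, and then reduce to \Cref{thm:SuperlinearConvergence1}. The only (harmless) difference is the justification of the expansion: the paper expands at $x^k$ via the semiderivative and uses continuity of $\Bjac{\nabla\varphi_\gamma}{\cdot}$ at $x_\star$ to replace $\Bjac{\nabla\varphi_\gamma}{x^k}[d^k]$ by $\nabla^2\varphi_\gamma(x_\star)d^k$, whereas you integrate $\nabla\varphi_\gamma$ along the segment using the strict differentiability at $x_\star$ supplied by \Cref{Thm:ContSemidiff} --- both yield the same $\tfrac12\innprod{\nabla\varphi_\gamma(x^k)}{d^k}+o(\|d^k\|^2)$ estimate.
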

\begin{proof}
See \Cref{Proof:thm:SuperlinearConvergence2}.
\end{proof}

		\subsection{BFGS}
			%The well known BFGS quasi-Newton method \cite{wright1999numerical} has favorable
%local convergence properties and therefore it is often the algorithm of choice
%for smooth unconstrained problems. In the setting of \Cref{alg:Global},
The sequence $\seq{B_k}$ can be computed using BFGS updates: starting from $B_0\succ 0$, use vectors
\begin{subequations}\label{Eq:BFGS}
\begin{equation}
s^{k} = w^k-x^k, \quad y^{k} = \nabla\varphi_{\gamma}(w^k)-\nabla\varphi_{\gamma}(x^k),\label{eq:BFGSpairs}\end{equation}
to compute
\begin{equation}
B_{k+1} = 
	\begin{cases}
		B_{k} + \frac{y^{k}(y^{k})^\T}{\innprod{y^{k}}{s^{k}}} - \frac{B_{k}s^{k}(B_{k}s^{k})^\T}{\innprod{s^{k}}{B_{k}s^{k}}} &
			\mbox{if } \innprod{s^k}{y^k}>0, \\ 
		B_{k} & \mbox{otherwise.}
	\end{cases}
\label{eq:BFGSupdate}
\end{equation}
\end{subequations}
Note that in this way $B_k\succ 0$, for all $k\geq 0$, and
$d^k = -B^{-1}\nabla\varphi_{\gamma}(x^k)$ is always a direction of descent for $\varphi_\gamma$.
No matrix inversion is needed to compute $d^k$ in practice, since it is possible to perform the
inverse updates of~\eqref{eq:BFGSupdate} directly producing the sequence $\seq{B_k^{-1}}$, see \cite{dennis1977quasi,wright1999numerical}.

In light of the convergence results for \Cref{alg:Global} given in \Cref{SEC:Algorithms}
we now proceed under either of the following assumptions.

\begin{ass}
Function $\varphi$
\begin{enumass}
	\item\label{ass:Convex}
		is either convex and such that
		$\varphi(x) - \varphi(x_\star) \geq \tfrac{c}{2}\|x-x_\star\|^2$,
		for some $c>0$ and all $x$ close enough to $x_\star$, the unique minimizer of $\varphi$;
	\item\label{ass:KL}
		or it has the KL property on \(\omega(x^0)\) with
		$\psi(s) = \sigma s^{1-\theta}$,
		where $\sigma>0$ and $\theta \in (0, \tfrac{1}{2}]$.
\end{enumass}
\end{ass}

\begin{thm}\label{thm:SuperlinearConvergenceBFGS}
Suppose that \Cref{Ass:fg} is satisfied, and that in \Cref{alg:Global} directions $d^k$ are set as
\[
	d^k = -B_k^{-1}\nabla\varphi_{\gamma_k}(x^k)
\qquad
	\text{with $B_k$ as in \eqref{Eq:BFGS},}
\]
and with $\tau_k = 1$ being tried first in step \ref{step:GlobalStepsize}.
Let \(\gamma=\gamma_\infty\) as in \Cref{lem:LowerBoundGamma}, and suppose further that the sequences $\seq{x^k}$ and $\seq{w^k}$
converge to a critical point $x_\star$ at which $\nabla\varphi_\gamma$ is calmly semidifferentiable (see \Cref{prop:Calm}) with $\nabla^2\varphi_\gamma(x_\star)\succ 0$.
Then, $x^k\to x_\star$ $Q$-superlinearly and $w^k\to x_\star$ $R$-superlinearly.
\end{thm}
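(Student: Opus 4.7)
The strategy is to reduce the claim to \Cref{thm:SuperlinearConvergence2}, which requires two ingredients: (a) the unit stepsize $\tau_k = 1$ is eventually accepted at step \ref{step:GlobalStepsize}, so that $w^k = x^k - B_k^{-1}\nabla\varphi_\gamma(x^k)$ as in \Cref{thm:SuperlinearConvergence1}; and (b) the BFGS sequence $\seq{B_k}$ generated by~\eqref{Eq:BFGS} satisfies the Dennis-Moré condition~\eqref{eq:DennisMoreCondition}. Once both are in place, \Cref{thm:SuperlinearConvergence2} immediately delivers the desired $Q$- and $R$-superlinear convergence of $\seq{x^k}$ and $\seq{w^k}$.

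For (a), I would use a standard Newton-type acceptance argument. Let $H_\star \eqdef \nabla^2\varphi_\gamma(x_\star)$. Once $B_k$ approximates $H_\star$ well enough along $s^k = w^k - x^k$ (a consequence of (b)), and $x^k$ is close to $x_\star$, the calm approximation $\nabla\varphi_\gamma(x^k + d^k) - \nabla\varphi_\gamma(x^k) = H_\star d^k + o(\|d^k\|)$, combined with positive definiteness of $H_\star$, yields $\varphi_\gamma(x^k + d^k) \leq \varphi_\gamma(x^k)$ for all large $k$, so the trial step $\tau_k = 1$ is accepted. For (b), I would adapt the classical Broyden-Dennis-Moré-Powell analysis for BFGS. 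The pairs $s^k = w^k - x^k$, $y^k = \nabla\varphi_\gamma(w^k) - \nabla\varphi_\gamma(x^k)$ satisfy the secant equation $B_{k+1}s^k = y^k$ whenever the update is not skipped. Applying calm semidifferentiability of $\nabla\varphi_\gamma$ at $x_\star$ (cf.~\Cref{prop:Calm}) at both $x^k$ and $w^k$ yields
\[
	\|y^k - H_\star s^k\|
{}\leq{}
	\varepsilon_k\|s^k\|,
	\qquad
	\varepsilon_k \to 0,
\]
which, together with $H_\star \succ 0$, ensures $\innprod{s^k}{y^k} > 0$ eventually (so the update is always performed) and provides a summable \emph{bounded deterioration} estimate for the Byrd-Nocedal potential $\Phi(B) = \tr(H_\star^{-1}B) - \log\det(H_\star^{-1}B) - n$. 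Telescoping then forces
\[
	\lim_{k\to\infty}
	\frac{\|(B_k - H_\star)(w^k-x^k)\|}{\|w^k-x^k\|}
{}={}
	0,
\]
which is exactly~\eqref{eq:DennisMoreCondition}.

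The main obstacle is the adaptation underlying (b): the classical BFGS superlinear convergence theorem assumes Lipschitz continuity of $\nabla^2\varphi_\gamma$ near $x_\star$, whereas here $\varphi_\gamma$ need not even be twice differentiable at points other than $x_\star$. The purpose of the calm-semidifferentiability hypothesis, supplied through \Cref{prop:Calm}, is to substitute for Hessian Lipschitzness: it furnishes the first-order expansion $\nabla\varphi_\gamma(x) - \nabla\varphi_\gamma(x_\star) = H_\star(x-x_\star) + o(\|x-x_\star\|)$ with a Lipschitz-controlled remainder, which is strong enough to drive the bounded-deterioration estimate to zero while remaining compatible with the intrinsically nonsmooth structure of $\varphi = f+g$. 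A minor additional subtlety is that the BFGS pairs are built from $w^k$ and $x^k$ rather than from consecutive iterates $x^{k+1} = T_\gamma(w^k)$ and $x^k$; under the convergence hypothesis $x^k, w^k \to x_\star$, this is harmless because both sequences enter the neighborhood where the calm estimate applies.
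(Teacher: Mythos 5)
Your overall architecture matches the paper's proof: reduce to \Cref{thm:SuperlinearConvergence2} by verifying the Dennis--Mor\'e condition \eqref{eq:DennisMoreCondition} for the BFGS sequence, using the calm-semidifferentiability estimate of \Cref{prop:Calm} as a substitute for Hessian Lipschitz continuity, checking $\innprod{s^k}{y^k}>0$ eventually, and invoking the Byrd--Nocedal bounded-deterioration machinery. (Your step (a) is redundant: unit-step acceptance is already proved \emph{inside} \Cref{thm:SuperlinearConvergence2}, so you only need (b).) However, there is a genuine gap in (b).

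You write $\|y^k - H_\star s^k\| \leq \varepsilon_k\|s^k\|$ with ``$\varepsilon_k\to 0$'' and then assert that this ``provides a summable bounded deterioration estimate.'' It does not. The telescoping argument on the Byrd--Nocedal potential (the paper cites \cite[Thm.~3.2]{byrd1989tool}) requires $\sum_k\varepsilon_k<\infty$, not merely $\varepsilon_k\to 0$; with only $\varepsilon_k\to 0$ the accumulated deterioration of $B_k$ can diverge and \eqref{eq:DennisMoreCondition} need not hold. Since \Cref{prop:Calm} gives $\varepsilon_k = L\max\set{}{\|w^k-x_\star\|,\|x^k-x_\star\|}$, summability is equivalent to the iterates converging fast enough --- and the bare hypothesis ``$x^k,w^k\to x_\star$'' does not supply this. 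This is exactly why the paper's proof opens by invoking the standing assumptions \ref{ass:Convex} or \ref{ass:KL} (stated just before the theorem) together with \Cref{thm:LocalLinConvCVX} or \Cref{thm:LocalLinConvKL}: in the convex case, $\nabla^2\varphi_\gamma(x_\star)\succ 0$ makes $x_\star$ a strong local minimizer of $\varphi_\gamma$, hence of $\varphi$ by \Cref{{prop:UppBnd},,{prop:FBEzer}}, and the local linear convergence theorem applies; in the KL case the linear rate comes from \Cref{thm:LocalLinConvKL}. Linear convergence makes $\seq{\varepsilon_k}$ geometrically decaying, hence summable, and only then does the rest of your argument go through. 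You need to add this step (and the accompanying hypothesis) to close the proof.
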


\begin{proof}
See \Cref{Proof:thm:SuperlinearConvergenceBFGS}.
\end{proof}

%In particular, the results of \Cref{cor:SuperlinearConvergenceBFGS} will hold if either
%$\varphi$ is convex,
%or it has the KL property of the form \eqref{eq:LojasiewiczProperty} at its critical points,
%$\beta>0$ in \Cref{alg:Global} and there exist positive constants $\bar{\tau}$ and
%$c$ such that $\tau_k\leq\bar{\tau}$ and $\|d^k\|\leq c\|R_{\gamma_k}(x^k)\|$
%In both cases, in fact, the iterates of \Cref{alg:Global} converges (at least) linearly
%to a critical point, as shown in \Cref{thm:LocalLinConvKL} and~\ref{thm:LocalLinConvCVX}.

		\subsection{L-BFGS}
			When dealing with a large number of variables, storing (and updating) approximations of the
Hessian matrix (or its inverse) may be impractical.
Limited-memory quasi-Newton methods remedy this by storing, instead of a dense $n\times n$
matrix, only a few most recent pairs $(s^k,y^k)$ implicitly representing such approximation.
The lim\-it\-ed-memory BFGS method (L-BFGS) is probably the most widely used method of this class, and
was first introduced in \cite{liu1989largescale}. It is based on the BFGS update, but uses at iteration
$k$ only the most recent $\tilde{m}=\min\set{}{m,k}$ pairs (here $m$ is a parameter, usually
$m\in\set{}{3,\ldots,20}$) to compute a descent direction: $d^k$ is obtained using a procedure
known as \emph{two-loop recursion}~\cite{nocedal1980updating}, so that no matrix storage is required,
and in fact only $O(n)$ operations are needed. For this reason, limited-memory methods like L-BFGS
are much more suited for large scale applications.
%
%\begin{algorithm}%
%	\caption{Computing L-BFGS directions}%
%	\label{alg:dirLBFGS}%
%	\input{TeX/Alg/dirLBFGS.tex}%
%\end{algorithm}
%
%In the context of \Cref{alg:Global}, the two-loop recursion is given by
%\Cref{alg:dirLBFGS}, which uses the pairs $(s^k,y^k)$ as in \eqref{eq:BFGSpairs},
%and where we denote $\rho_k = \innprod{s^k}{y^k}$.
Similarly to BFGS,
a safeguard is used to make sure that $\innprod{s^k}{y^k}>0$, so that
$d^k$ is always a descent direction for $\varphi_{\gamma_k}$.

	\section{Simulations}
		\label{SEC:Simulations}
		We now present numerical results obtained with the proposed methods.
We compare \Cref{{alg:Global},{alg:GlobalFixed}}, using BFGS and L-BFGS directions, against the corresponding classical
line-search method (see \Cref{it:LineSearch}), forward-backward splitting (FBS) and its
accelerated variant (Fast FBS) in the case of convex problems.
We set $\beta = 0.05$ in \Cref{alg:Global}, therefore when $L_f$ is known and \Cref{alg:GlobalFixed} is applied instead, then
we set $\gamma = 0.95/L_f$.
To determine the stepsize $\tau_k$ in \Cref{alg:Global} we use a simple backtracking line-search verifying the condition
$\varphi_{\gamma_k}(x^k+\tau_k d^k)\leq \varphi_{\gamma_k}(x^k)$. In classical BFGS and L-BFGS we use instead a procedure
enforcing the usual Wolfe conditions, inspired by \cite[Sec.~II.3.3]{hiriart1996convex}: although simpler, this strategy
performed favorably with respect to other algorithms in our tests, see \cite[Sec.~1.2]{bertsekas1999nonlinear}, \cite[Sec.~3]{wright1999numerical},
\cite[Sec.~2.6]{fletcher2013practical}.
We always set the memory parameter $m=5$ when computing L-BFGS directions.

All experiments were performed in MATLAB. The implementation of the methods used in the
tests are made available by the authors.\footnote{\url{http://github.com/kul-forbes/ForBES}}

		\subsection{Lasso}
			The problem is to find a sparse representation of a vector $b\in\Re^m$ as combination
of the columns of $A\in\Re^{m\times n}$. This is done by minimizing $\varphi = f+g$ where
$$ f(x) = \tfrac{1}{2}\|Ax-b\|_2^2,\quad g(x) = \lambda\|x\|_1. $$
The proximal mapping of $g$ is the soft-thresholding operation,
%$$ \prox_{\gamma g}(x) = \sign(x)\max\{0,|x|-\lambda\gamma\}, $$
while the computationally relevant operation here is the computation of $f$ and $\nabla f$, which
involves matrix-vector products with $A$ and $A^\T$.
Parameter $\lambda$ modulates between a small residual
$Ax_\star-b$ and a sparse solution vector $x_\star$, \ie, the larger $\lambda$ the
more zero coefficients $x_\star$ has. In particular, $\lambda_\max = \|A^\T b\|_\infty$
is the minimum value such that for $\lambda \geq \lambda_\max$ the solution is $x_\star = 0$.
We have $L_f = \|A^\T A\|$, which can be quickly approximated for using power iteration.

%We first ran the methods on randomly generated small-scale instances,
%with $m=150$ samples and $n=500$ variables. These were generated by
%sampling $A$ with normally distributed entries, and then normalizing its columns to have unit
%norm. Then we generated $x_{\mbox{true}}$ with $10$ nonzero entries and obtained $b=Ax_{\mbox{true}} + v$,
%where the entries of $v$ are normally distributed with standard deviation $\sigma = 0.05$. The
%resulting probles have a signal to noise ratio $\|Ax_{\mbox{true}}\|_2^2/\|v\|_2^2 \approx 16$.
%\Cref{fig:bpdn_small} shows the performance of \Cref{alg:GlobalFixed}, with BFGS
%directions, compared to FBS and fast FBS, with $\lambda = 0.5\lambda_\max$.

We ran the algorithms on the SPEAR datasets.\footnote{\url{http://wwwopt.mathematik.tu-darmstadt.de/spear/}}
Among these, we considered the 274 \emph{high dynamic range} datasets, \ie,
the ones for which the ratio $\max|x_\star|_i / \min|x_\star|_i$ is large, and
are usually the hardest instances for algorithms to solve.
First we considered BFGS directions in \Cref{alg:GlobalFixed}, on a small dimensional instance and the results are
illustrated in \Cref{fig:bpdn_small}. We computed $x_\star$ and $\varphi_\star$ (approximately) by running
one of the methods with very restrictive termination criteria.
Then we considered the largest instances, using L-BFGS directions in \Cref{alg:GlobalFixed}.
Performance of the algorithms is detailed in \Cref{tab:BPDN1}.
We stopped the algorithms as soon as $$ \varphi(x^k)-\varphi_\star \leq 10^{-6}(1+|\varphi_\star|). $$
It is apparent that \Cref{alg:GlobalFixed} requires much less iterations and operations than fast FBS.

\begin{figure}[tbp]
\centering
	\includegraphics{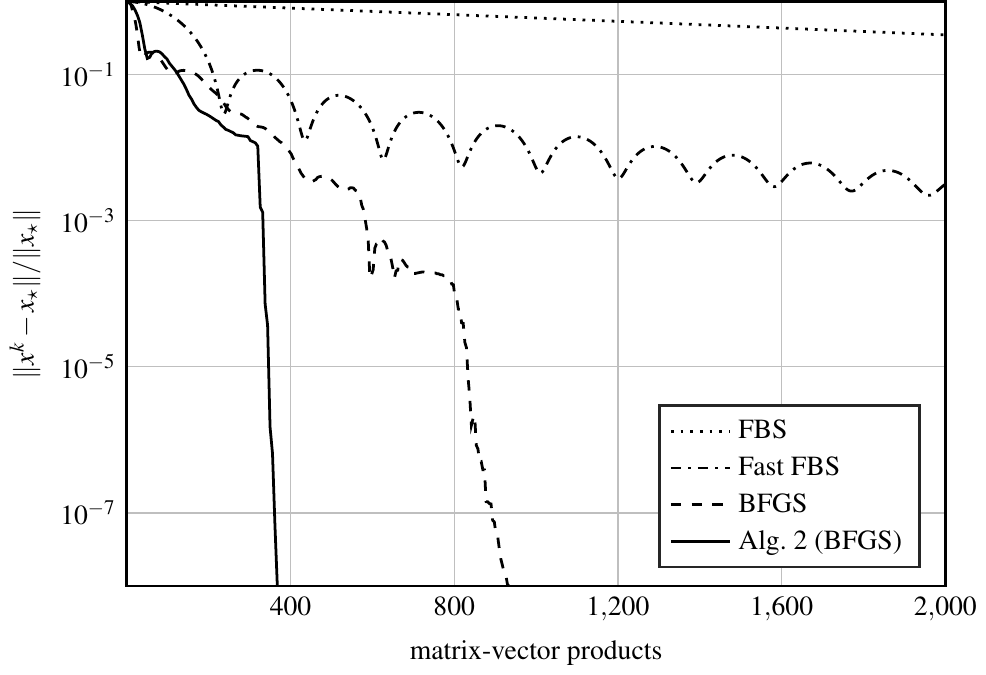}
	\caption{Lasso: performance comparison between forward-back\-ward splitting, the standard BFGS algorithm
		applied to the minimization of $\varphi_\gamma$, and the proposed \Cref{alg:GlobalFixed} (with BFGS directions).
		The dataset used is \texttt{spear_inst_1}, with $m=512$ samples and $n=1024$ variables,
		and $\lambda = 0.05\lambda_\max$ was used.}
	\label{fig:bpdn_small}
\end{figure}

\begin{table}[tb]
\tiny
\begin{center}
\begin{tabular}{@{}|l|r|r||rr|rr|rr|@{}}
\hline
& & & \multicolumn{2}{c|}{Fast FBS} & \multicolumn{2}{c|}{L-BFGS} & \multicolumn{2}{c|}{Alg.~\ref{alg:GlobalFixed} (L-BFGS)} \\
ID  & $\lambda/\lambda_\max$ & nnz($x_\star$) & iterations & matvecs & iterations & matvecs & iterations & matvecs \\
\hline
\hline
\textbf{191} & $1\cdot 10^{-1}$ & 3  & 2260  & 4520  & 206 & 822 & 105  & 626  \\
$m=8192$ & $5\cdot 10^{-2}$ & 4  & 7529  & 15058 & 403 & 1610 & 195 & 1166 \\
$n=32768$ & $1\cdot 10^{-2}$ & 7  & 11222 & 22444 & 890 & 3558 & 425 & 2546 \\
\hline
\textbf{192} & $9\cdot 10^{-1}$ & 2  & 234   & 468   & 2 & 6 & 2   & 8    \\
$m=8192$ & $7\cdot 10^{-1}$ & 9  & 649   & 1298  & 55 & 218 & 28  & 164  \\
$n=32768$ & $5\cdot 10^{-1}$ & 21 & 2051  & 4102  & 276 & 1102 & 99  & 590  \\
\hline
\textbf{193} & $5\cdot 10^{-1}$ & 3  & 2142  & 4284  & 327 & 1306 & 85  & 506  \\
$m=8192$ & $1\cdot 10^{-1}$ & 4  & 2413  & 4826  & 874 & 3494 & 389 & 2330 \\
$n=32768$ & $5\cdot 10^{-2}$ & 5  & 4761  & 9522  & 1159 & 4634 & 390 & 2336 \\
\hline
\textbf{194} & $9\cdot 10^{-1}$ & 6  & 338   & 676   & 45 & 178 & 15  & 86   \\
$m=8192$ & $7\cdot 10^{-1}$ & 29 & 1532  & 3064  & 400 & 1598 & 132 & 788  \\
$n=32768$ & $5\cdot 10^{-1}$ & 51 & 2482  & 4964  & 616 & 2462 & 294 & 1760 \\
\hline
\textbf{195} & $5\cdot 10^{-2}$ & 2  & 3215  & 6430  & 301 & 1202 & 170 & 1016  \\
$m=8192$ & $1\cdot 10^{-2}$ & 3  & 3833  & 7666  & 685 & 2738 & 352 & 2108 \\
$n=32768$ & $5\cdot 10^{-3}$ & 4  & 10525 & 21050 & 966 & 3862 & 490 & 2936 \\
\hline
\textbf{196} & $7\cdot 10^{-1}$ & 2  & 651   & 1302  & 37 & 146 & 9   & 50   \\
$m=8192$ & $5\cdot 10^{-1}$ & 4  & 1656  & 3312  & 241 & 962 & 16  & 92  \\
$n=32768$ & $1\cdot 10^{-1}$ & 11 & 3075  & 6150  & 515 & 2058 & 265 & 1586 \\
\hline
\textbf{197} & $1\cdot 10^{-1}$ & 3  & 3863  & 7726  & 638 & 2550 & 134 & 800 \\
$m=8192$ & $5\cdot 10^{-2}$ & 4  & 4343  & 8686  & 842 & 3366 & 293 & 1754 \\
$n=32768$ & $1\cdot 10^{-2}$ & 6  & 10894 & 21788 & 2436 & 9742 & 528 & 3164 \\
\hline
\textbf{198} & $9\cdot 10^{-1}$ & 4  & 628   & 1256  & 99 & 394 & 17  & 98   \\
$m=8192$ & $7\cdot 10^{-1}$ & 17 & 2351  & 4702  & 317 & 1266 & 113  & 674  \\
$n=32768$ & $5\cdot 10^{-1}$ & 25 & 3236  & 6472  & 467 & 1866 & 160 & 956  \\
\hline
\textbf{199} & $1\cdot 10^{-1}$ & 2  & 7163  & 14326 & 682 & 2716 & 274 & 1640 \\
$m=8192$ & $5\cdot 10^{-2}$ & 3  & 2537  & 5074  & 1011 & 4042 & 395 & 2366 \\
$n=49152$ & $1\cdot 10^{-2}$ & 5  & 10460 & 20920 & 2124 & 8494 & 770 & 4616 \\
\hline
\textbf{200} & $9\cdot 10^{-1}$ & 6  & 622   & 1244  & 134 & 534 & 25  & 146  \\
$m=8192$ & $7\cdot 10^{-1}$ & 10 & 2144  & 4288  & 297 & 1186 & 86 & 512  \\
$n=49152$ & $5\cdot 10^{-1}$ & 19 & 2313  & 4626  & 538 & 2150 & 197 & 1178 \\
\hline
\end{tabular}
\vspace{5mm}
\caption{Lasso: performance of the algorithms
on the largest of the considered datasets. Column ID indicates the identification number of
the dataset among the SPEAR instances.
Results refer to iterations and matrix vector
products needed to obtain $\varphi(x)-\varphi_\star \leq 10^{-6}|\varphi_\star|$.}\label{tab:BPDN1}
\end{center}
\end{table}

		\subsection{Sparse logistic regression}
			The composite objective function consists of
$$ f(x) = \sum_{i=1}^m\log(1+e^{-b_i\innprod{a_{i}}{x}}),\qquad g(x) = \lambda\|x\|_1.$$
Here vector $a_i\in\Re^n$ contains the features of the $i$-th instance, and $b_i\in\set{}{-1,1}$ indicates the correspondent class.
The $\ell_1$-regularization enforces sparsity in the solution.
We have $\lambda_\max = \tfrac{1}{2}\|A^\T b\|_\infty$, so that for $\lambda \geq \lambda_\max$ the optimal solution is $x_\star = 0$.

We ran the algorithms one three datasets.\footnote{\url{http://www.csie.ntu.edu.tw/~cjlin/libsvmtools/datasets/}}
Indicating with $A\in\Re^{m\times n}$ the matrix having rows $a_1, a_2, \ldots, a_m$,
we computed $\varphi_\star$ numerically, and recorded the number of iterations, calls to $f$, $\nabla f(x)$
and matrix-vector products with $A$, $A^\T$ needed to reach a point $x$ at which
$$ \varphi(x)-\varphi_\star \leq 10^{-8}(1+|\varphi_\star|), $$
The results are shown in \Cref{tab:SparseLogReg1}.

\begin{table}[tb]
  \tiny
  \centering
\begin{tabular}{|l|r|r|r|r|r|}
\hline
& & & \multicolumn{1}{c|}{Fast FBS} & \multicolumn{1}{c|}{L-BFGS} & \multicolumn{1}{c|}{Alg.~\ref{alg:Global} (L-BFGS)} \\
ID  & $\lambda/\lambda_\max$ & nnz($x_\star$) & \multicolumn{1}{c|}{it./$f$/$A$} & \multicolumn{1}{c|}{it./$f$/$A$} & \multicolumn{1}{c|}{it./$f$/$A$} \\
\hline
\textbf{rcv1} & $2\cdot 10^{-1}$ & 25  & 102/209/311       & 130/300/817 & 51/185/436     \\
$m=20242$ & $1\cdot 10^{-1}$ & 70  & 228/461/689       & 271/603/1684 & 92/346/802    \\
$n=44504$ & $5\cdot 10^{-2}$ & 141 & 356/717/1073      & 337/765/2110 & 127/473/1104    \\
$\mbox{nnz}(A)=910K$ & $2\cdot 10^{-2}$ & 287 & 774/1553/2327     & 511/1127/3168 & 210/687/1733    \\
 & $1\cdot 10^{-2}$ & 470 & 943/1891/2834     & 709/1585/4418 & 298/954/2440   \\
\hline
\textbf{real-sim} & $2\cdot 10^{-1}$ & 19  & 65/135/200        & 72/159/444 & 28/92/228     \\
$m=72201$ & $1\cdot 10^{-1}$ & 52  & 204/414/618       & 115/259/716 & 51/176/427     \\
$n=20958$ & $5\cdot 10^{-2}$ & 111 & 332/670/1002      & 161/348/989 & 69/226/567    \\
$\mbox{nnz}(A)=1.5M$ & $2\cdot 10^{-2}$ & 251 & 590/1186/1776     & 237/524/1469 & 113/378/939    \\
 & $1\cdot 10^{-2}$ & 448 & 918/1842/2760     & 339/741/2094 & 153/509/1270    \\
\hline
\textbf{news20} & $2\cdot 10^{-1}$ & 47  & 785/1579/2364     & 298/665/1854 & 118/416/1002    \\
$m=19954$ & $1\cdot 10^{-1}$ & 98  & 1120/2248/3368    & 689/1533/4286 & 203/710/1721  \\
$n=1355191$ & $5\cdot 10^{-2}$ & 208 & 3077/6163/9240    & 703/1592/4401 & 296/1078/2554  \\
$\mbox{nnz}(A)=3.7M$ & $2\cdot 10^{-2}$ & 422 & 6586/13181/19767  & 2114/4881/13334 & 674/2292/5658 \\
 & $1\cdot 10^{-2}$ & 725 & 11072/22153/33225 & 2761/6343/17384 & 1064/3713/9029 \\
\hline
\end{tabular}
\vspace{5mm}
\caption{Sparse logistic regression: detail of the performance of the algorithms
to reach $\varphi(x)-\varphi_\star \leq 10^{-8}|\varphi_\star|$, for different
values of $\lambda$.}
\label{tab:SparseLogReg1}
\end{table}

%\begin{table}
%  \tiny
%  \centering
%\begin{tabular}{|l|r|r|r|}
%\hline
%ID & m & n & nnz($A$) \\
%\hline
%rcv1     & 20242 & 44504 & 910K\\
%real-sim & 72201 & 20958 & 1.5M\\
%news20   & 19954 & 1355191 & 3.7M\\
%\hline
%\end{tabular}
%\vspace{5mm}
%\caption{Sparse logistic regression: detail of the datasets used.}\label{tab:SparseLogReg0}
%\end{table}

		\subsection{Group lasso}
			Let vector $x$ be partitioned as $x = (x_1,\ldots,x_N)$, where each $x_i\in\Re^{n_i}$, and $\sum_i n_i = n$.
We consider the $\ell_2$-regularized least squares problem having
$$ f(x) = \frac{1}{2}\| A x-b\|_2^2,\quad g(x) = \lambda\sum_{i=1}^N\|x_i\|_2, $$
where $x = (x_1,\ldots,x_N)$ and $x_i\in\Re^{n_i}$ for $i=1,\ldots,N$.
The $\ell_2$ terms enforce sparsity at the block level, so that for sufficiently large $\lambda$
we expect many of the $x_i$'s to be zero. Partitioning the $A$ by columns as $A=(A_1,\ldots,A_N)$,
with the same block structure at $x$,
then for $\lambda\geq\lambda_\max = \max\set{}{\|A_1^\T b\|_2,\ldots,\|A_N^\T b\|_2}$
the optimal solution is $x_\star = 0$.

To test the methods we generated $A$ and $b$
according to the procedure described in \cite{boyd2011distributed}: take $A$
with normally distributed entries, with zero mean and unit variance,
and $x_{\textrm{true}}$ with a small number of nonzero blocks. Then compute $b = A x_{\textrm{true}}+v$,
where $v$ is a Gaussian noise vector with standard deviation $0.1$.
We compared FBS, L-BFGS and \Cref{alg:Global} with L-BFGS
directions, on a random problem with $m=200$ examples, $N=200$ blocks of variables
and $n_1 = \ldots = n_N = 100$, for a total of $2\cdot 10^4$ decision variables. The resulting
$A$ is a dense matrix with $4$ million coefficients. \Cref{fig:GroupLasso1} shows
the simulation results.

\begin{figure}[tbp]
\centering
	\includegraphics{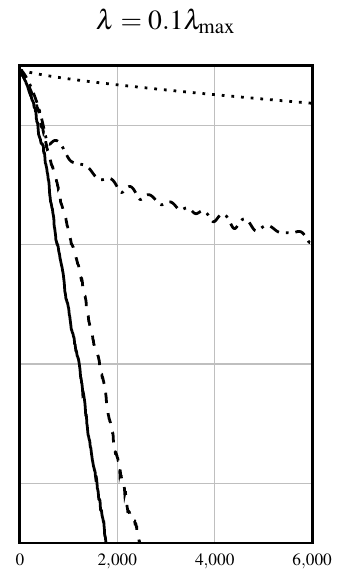}
	\includegraphics{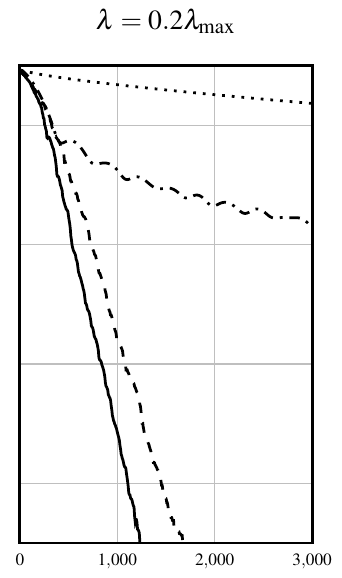}
	\includegraphics{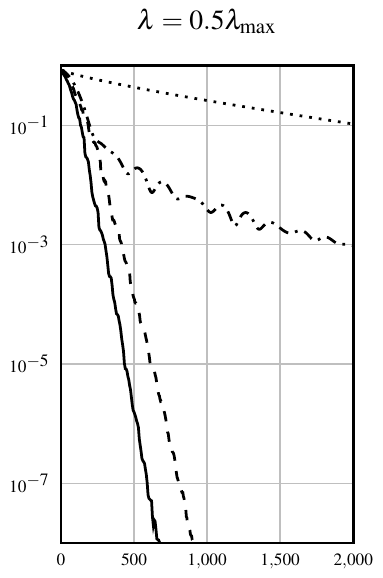}
	
	\includegraphics[width=0.5\linewidth]{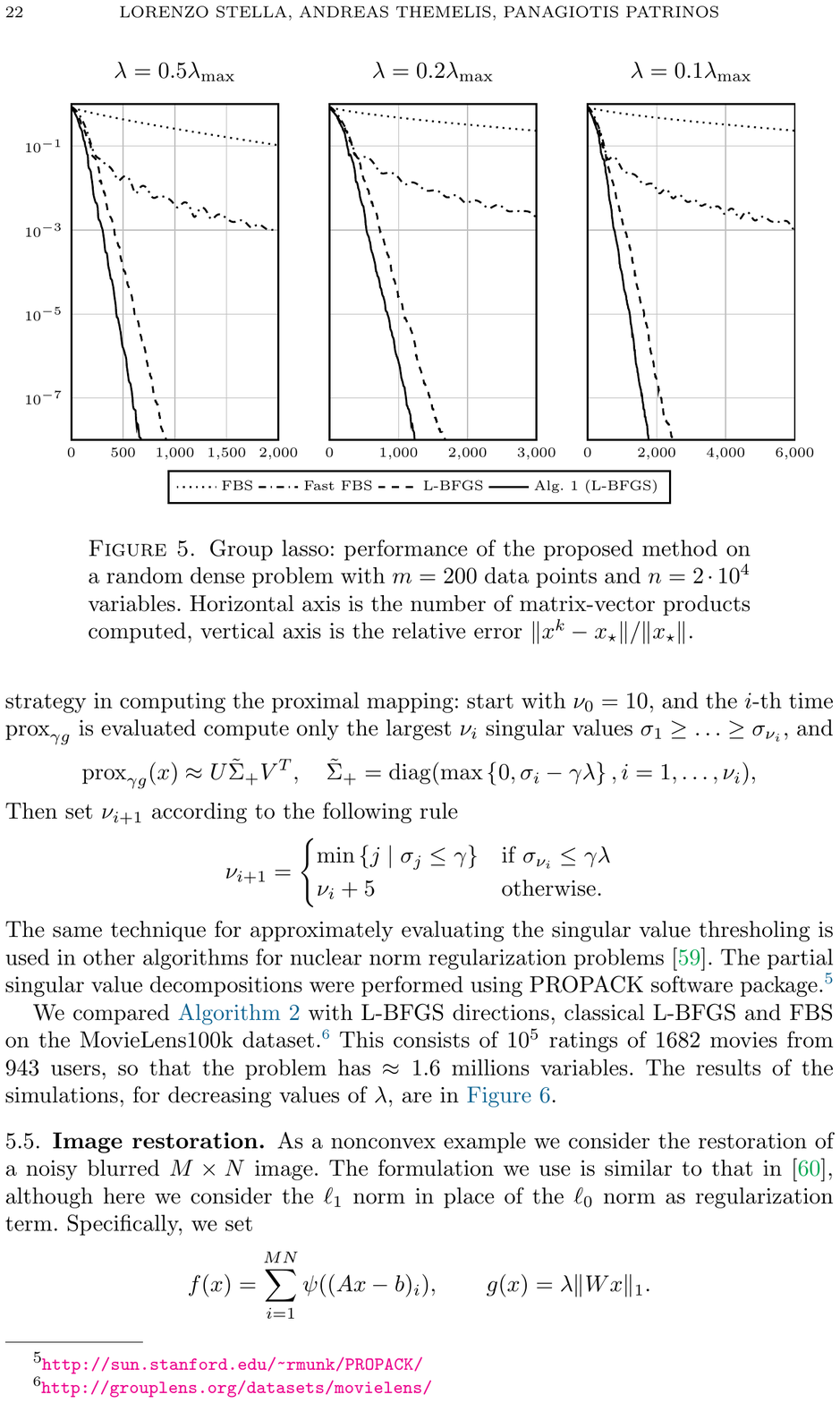}%	
	\caption{Group lasso: performance of the proposed method on a random dense problem with
		$m=200$ data points and $n = 2\cdot 10^4$ variables.
		Horizontal axis is the number of matrix-vector products computed,
		vertical axis is the relative error $\|x^k-x_\star\|/\|x_\star\|$.}
	\label{fig:GroupLasso1}
\end{figure}

		\subsection{Matrix completion}
			We consider the problem of recovering the entries of an $m$-by-$n$ matrix, which is known to have small rank, from a sample of them.
One may refer to~\cite{candes2009exact} for a detailed theoretical analysis of the problem.
The decision variable is now a matrix $x = (x_{ij})\in\Re^{m\times n}$, and the problem has the form
$$f(x) = \tfrac{1}{2}\|\mathcal{A}(x)-b\|^2,\qquad g(x) = \lambda\|x\|_*,$$
where $\mathcal{A}:\Re^{m\times n}\to\Re^k$ is a linear mapping selecting $k$ entries from $x$,
vector $b\in\Re^k$ contains the known entries, and $\|x\|_*$ indicates the nuclear norm of $x$, which is the sum of its singular values.
%Indicating with $r$ the rank of $x$, the proximal mapping associated with $g$ is the following
%singular value thresholding operation:
%$$ \prox_{\gamma g}(x) = U \Sigma_+ V^T, \quad \Sigma_+ = \diag(\max\{0, \sigma_i-\gamma\lambda\}, i=1,\ldots,r), $$
%where $U, V$ and $\sigma_1,\ldots,\sigma_r$ come from the singular value decomposition of $x$.
In this case $L_f=1$, therefore we can apply \Cref{alg:GlobalFixed}.

The most computational expensive operation here is the proximal step,
requiring the computation of a SVD.
Computing the full SVD becomes computationally infeasible as $m$ and $n$ grow, therefore we use
the following partial decomposition strategy in computing the proximal mapping: start with $\nu_0 = 10$, and
the $i$-th time $\prox_{\gamma g}$ is evaluated compute only the largest $\nu_i$ singular values $\sigma_1\geq\ldots\geq\sigma_{\nu_i}$, and
$$ \prox_{\gamma g}(x) \approx U \tilde{\Sigma}_+ V^T, \quad \tilde{\Sigma}_+ = \diag(\max\set{}{0, \sigma_i-\gamma\lambda}, i=1,\ldots,\nu_i), $$
Then set $\nu_{i+1}$ according to the following rule
$$ \nu_{i+1} = \begin{cases}
  \min\set{j}{\sigma_j \leq \gamma} & \mbox{if } \sigma_{\nu_i} \leq \gamma\lambda \\
  \nu_i + 5 & \mbox{otherwise.}
\end{cases}$$
The same technique for approximately evaluating the singular value thresholing is used in other algorithms
for nuclear norm regularization problems \cite{toh2010accelerated}.
The partial singular value decompositions were performed using PROPACK software package.\footnote{\url{http://sun.stanford.edu/~rmunk/PROPACK/}}

We compared \Cref{alg:GlobalFixed} with L-BFGS directions, classical L-BFGS and FBS
on the MovieLens100k dataset.\footnote{\url{http://grouplens.org/datasets/movielens/}} This consists of $10^5$ ratings of $1682$ movies
from $943$ users, so that the problem has $\approx 1.6$ millions variables. The results of the simulations, for decreasing values of $\lambda$, are in \Cref{fig:MatComp1}.

\begin{figure}[tb]
\centering
	\includegraphics{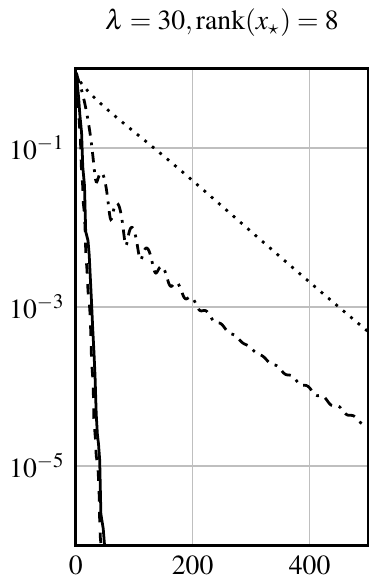}
	\includegraphics{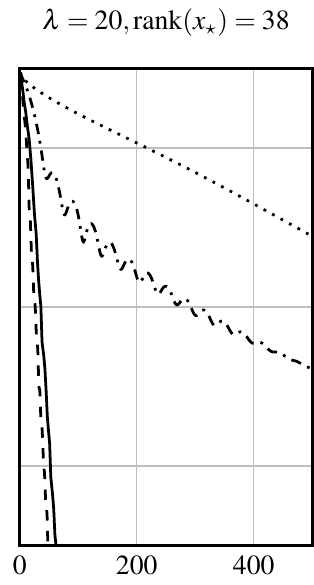}
	\includegraphics{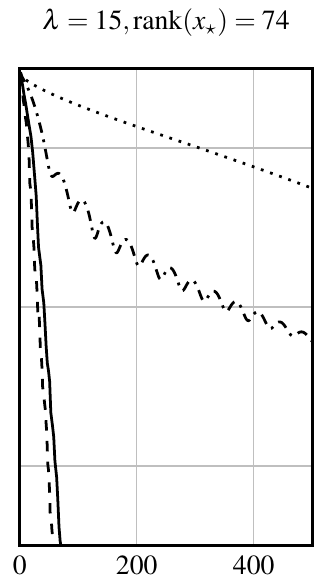}
	
	\includegraphics[width=0.5\linewidth]{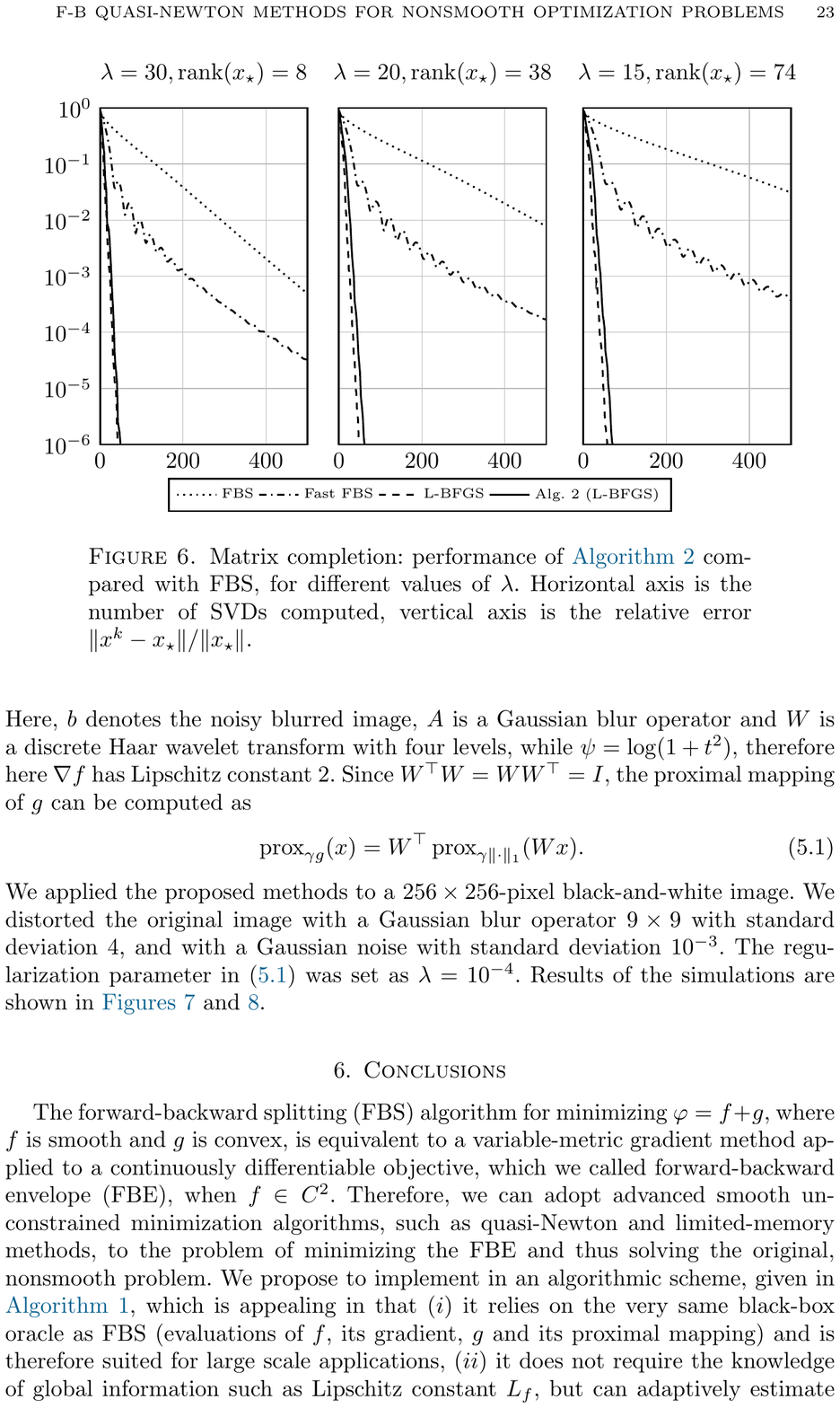}%
	\caption{Matrix completion: performance of \Cref{alg:GlobalFixed} compared with FBS, for different
		values of $\lambda$. Horizontal axis is the number of SVDs computed,
		vertical axis is the relative error $\|x^k-x_\star\|/\|x_\star\|$.}
	\label{fig:MatComp1}
\end{figure}

		\subsection{Image restoration}
			\begin{figure}[p]
	\centering
	\includegraphics{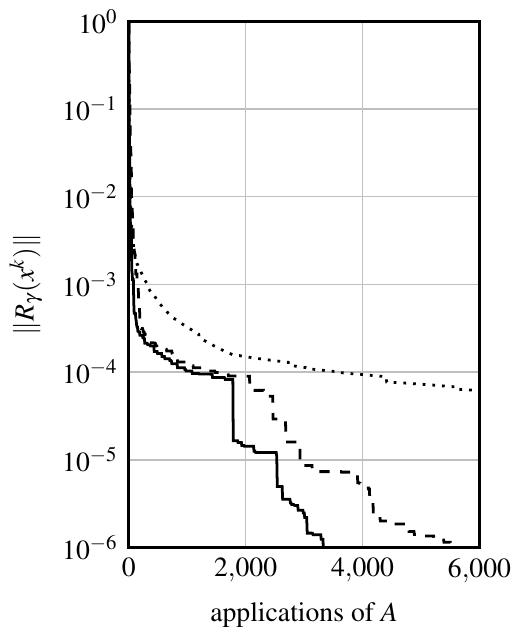}%
	\quad
	\includegraphics{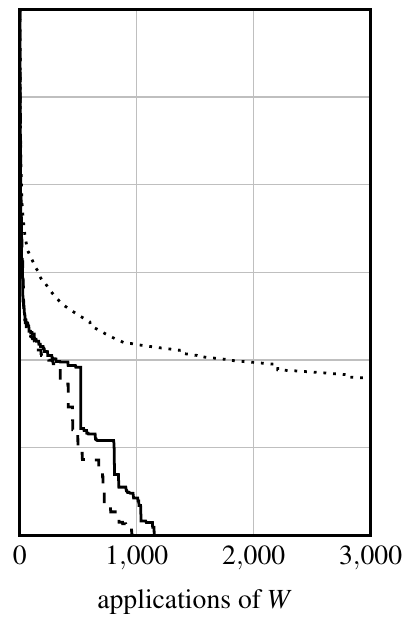}%
	
	\includegraphics[width=0.5\linewidth]{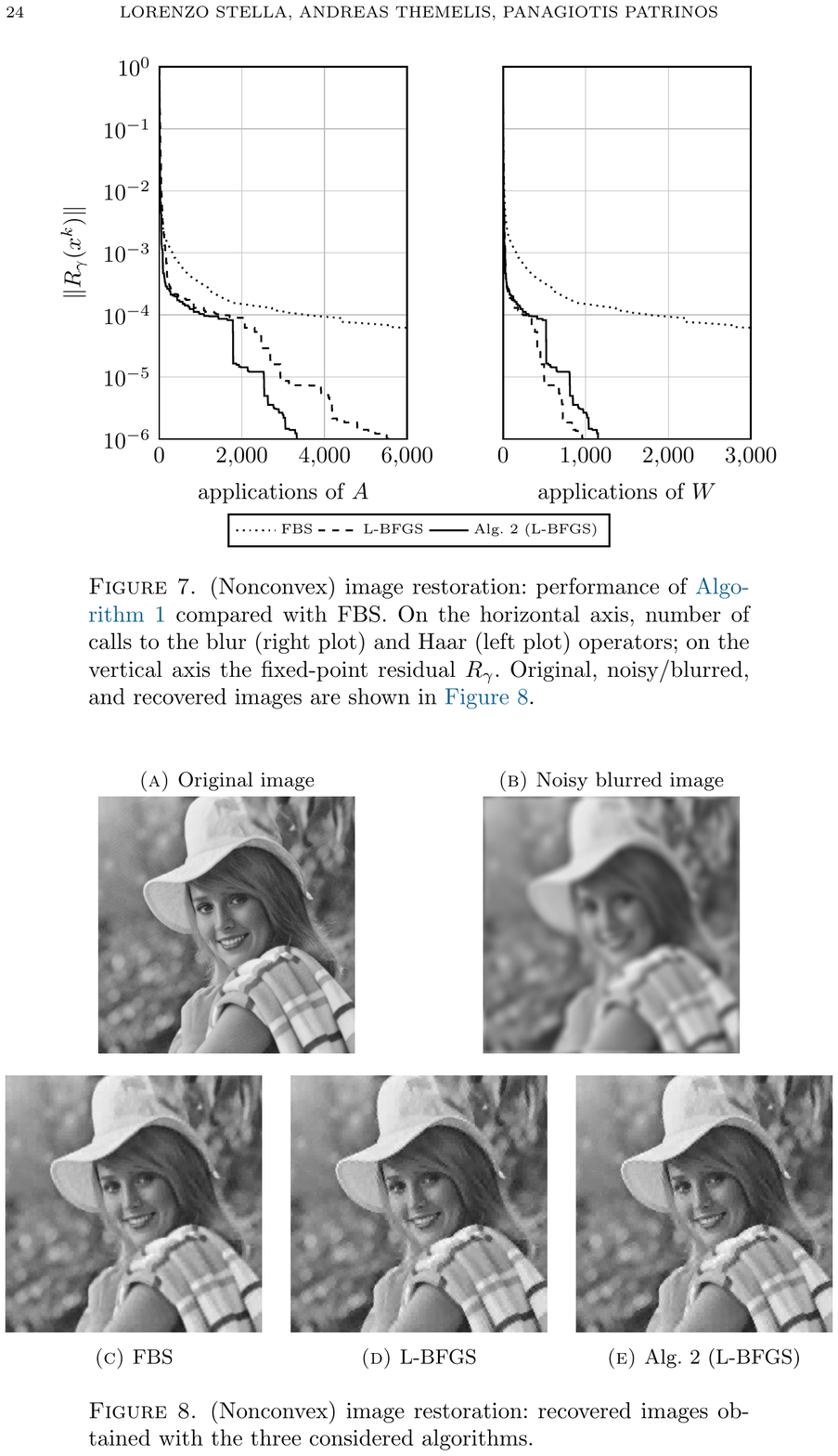}%	
	\caption{(Nonconvex) image restoration: performance of \Cref{alg:Global} compared with FBS.
		On the horizontal axis, number of calls to the blur (left plot) and Haar (right plot) operators;
		on the vertical axis the fixed-point residual $R_\gamma$.
		Original, noisy/blurred, and recovered images are shown in \Cref{fig:Elaine}.}
	\label{fig:NoiseBlur}
	
	\vspace{10pt}
	\begin{subfigure}[t]{0.31\linewidth}%
		\caption{Original image}%
		\includegraphics[width=\linewidth]{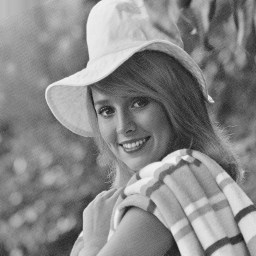}%
	\end{subfigure}%
	\hspace{12pt}%
	\begin{subfigure}[t]{0.31\linewidth}%
		\caption{Noisy blurred image}%
		\includegraphics[width=\linewidth]{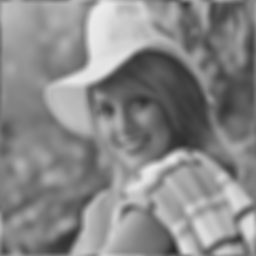}%
	\end{subfigure}
	
	\vspace{9pt}\noindent%
	\begin{subfigure}[t]{0.31\linewidth}%
		\includegraphics[width=\linewidth]{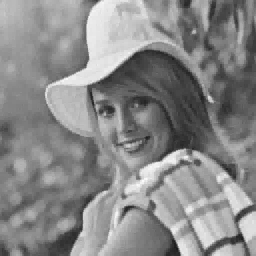}%
		\caption{FBS}%
	\end{subfigure}%
	\hfill%
	\begin{subfigure}[t]{0.31\linewidth}%
		\includegraphics[width=\linewidth]{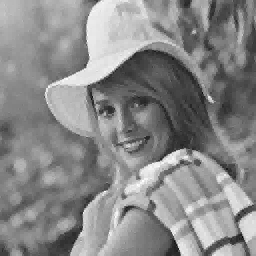}%
		\caption{L-BFGS}%
	\end{subfigure}%
	\hfill%
	\begin{subfigure}[t]{0.31\linewidth}%
		\includegraphics[width=\linewidth]{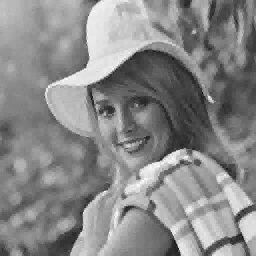}%
		\caption{Alg. 2 (L-BFGS)}%
	\end{subfigure}%
	\caption{(Nonconvex) image restoration: recovered images obtained with the three considered algorithms.}\label{fig:Elaine}
\end{figure}

As a nonconvex example we consider the restoration of a noisy blurred $M\times N$ image. The formulation we use is similar
to that in \cite{boct2014inertial}, although here we consider the $\ell_1$ norm in place of the $\ell_0$ norm as regularization term.
Specifically, we set
$$ f(x) = \sum_{i=1}^{MN} \psi((Ax-b)_i), \qquad g(x) = \lambda\|Wx\|_1. $$ 
Here, $b$ denotes the noisy blurred image, $A$ is a Gaussian blur operator and $W$ is a discrete Haar wavelet transform
with four levels, while $\psi = \log(1+t^2)$, therefore here $\nabla f$ has Lipschitz constant $2$.
Since $W^\top W = WW^\top = I$, the proximal mapping of $g$ can be computed as
\begin{equation}\label{eq:NoiseBlur}
	\prox_{\gamma g}(x) = W^\top \prox_{\gamma\|{}\cdot{}\|_1}(Wx).
\end{equation}
We applied the proposed methods to a $256\times 256$-pixel black-and-white image.
We distorted the original image with a Gaussian blur operator $9\times 9$ with standard deviation $4$, and with a Gaussian noise with standard deviation $10^{-3}$.
The regularization parameter in \eqref{eq:NoiseBlur} was set as $\lambda = 10^{-4}$.
Results of the simulations are shown in \Cref{{fig:Elaine},{fig:NoiseBlur}}.

	\section{Conclusions}
		The forward-backward splitting (FBS) algorithm for minimizing $\varphi = f+g$, where $f$ is smooth and
$g$ is convex, is equivalent to a variable-metric gradient method applied to a continuously differentiable objective, which we called
forward-backward envelope (FBE), when $f\in \cont^2$. Therefore, we can adopt advanced smooth unconstrained
minimization algorithms, such as quasi-Newton and limited-memory methods, to the problem of minimizing the FBE and thus
solving the original, nonsmooth problem. We propose to implement in an algorithmic scheme,
given in \Cref{alg:Global}, which is appealing in that $(i)$ it relies on the very same black-box oracle as FBS
(evaluations of $f$, its gradient, $g$ and its proximal mapping) and is therefore suited for large scale
applications, $(ii)$ it does not require the knowledge of global information such as Lipschitz constant $L_f$,
but can adaptively estimate it.
The proposed method exploits the composite structure of $\varphi$, and alternates line-search steps over descent
directions and forward-backward steps. For this reason, \Cref{alg:Global} possesses the same global convergence
properties of FBS, under the assumptions that $\varphi$ has the Kurdyka-{\L}ojasiewicz properties at its critical points,
and a global convergence rate $O(1/k)$ in case $\varphi$ is convex.
This is a peculiar feature of our approach, since line-search methods do not converge to stationary 
points, in general, when applied to nonconvex functions.
Moreover, we proved that when quasi-Newton directions are used in \Cref{alg:Global}, and the FBE is twice
differentiable with nonsingular Hessian at the limit point of the sequence of iterates,
superlinear asymptotic convergence is achieved.
Our theoretical results are supported by numerical experiments. These show that \Cref{alg:Global} with
(limited-memory) quasi-Newton directions improves the asymptotic convergence of FBS (and its accelerated variant when
$\varphi$ is convex), and usually converges faster than the corresponding classical line-search method applied to
the problem of minimizing the FBE.

%%%%%%%%%%%%%%%%%%%%%%%%%%%%%%%%%%%%%%%%%%%%%%%%%%%%%%%%%%%%%%%%%%%%%%%%%%%%%%%%
%%%%%%%%%%%%%%%%%%%%%%%%%%%%%%%%%%%%%%%%%%%%%%%%%%%%%%%%%%%%%%%%%%%%%%%%%%%%%%%%

\bibliographystyle{IEEEtran}
\bibliography{TeX/FB-QN.bib}

%%%%%%%%%%%%%%%%%%%%%%%%%%%%%%%%%%%%%%%%%%%%%%%%%%%%%%%%%%%%%%%%%%%%%%%%%%%%%%%%
%%%%%%%%%%%%%%%%%%%%%%%%%%%%%%%%%%%%%%%%%%%%%%%%%%%%%%%%%%%%%%%%%%%%%%%%%%%%%%%%

\newcommand\appendixProofTitle[1]{Proofs of \texorpdfstring{\Cref{#1}}{§\ref{#1}}}
\newenvironment{appendixproof}[1]{%
	\par%
	\phantomsection%
	\addcontentsline{toc}{subsection}{Proof of Theorem \ref{#1}}%
	\label{Proof:#1}%
	\begin{proof}[\bf Proof of \Cref{#1}]~
	\par%
}{%
	\end{proof}%
}

\begin{appendix}
	\section{Definitions and known results}
		\label{APP:Definitions}
		Given a differentiable mapping \(G:\Re^n\to\Re^m\) we let \(\jac G{}:\Re^n\to\Re^{m\times n}\) denote the \DEF{Jacobian} of \(G\).
When \(m=1\) we indicate with \(\nabla G=\jac G{}^\top\) the \DEF{gradient} of \(G\) and with \(\nabla^2 G=\jac{\nabla G}{}^\top\) its \DEF{Hessian}, whenever it makes sense.
We say that $G$ is \DEF{strictly differentiable} at $\bar x$ if it satisfies the stronger limit
\[
	\lim_{
		\substack{
			(x,y)\to(\bar x,\bar x)\\
			x\neq y
		}
	}{
		\frac{\|G(y)-G(x)-\jac G{\bar x}[y-x]\|}{\|y-x\|}
	}
{}={}
	0
\]
The next result states that strict differentiability is preserved by composition; its proof is a trivial computation and is therefore omitted.
\begin{prop}\label{prop:StrDiffComp}
Let $F:\Re^n\to\Re^m$, $P:\Re^m\to\Re^k$.
Suppose that $F$ and $P$ are (strictly) differentiable at $\bar x$ and $F(\bar x)$, respectively.
Then the composition $T=P\circ F$ is (strictly) differentiable at $\bar x$.
%\begin{proof}
%For all \(x\neq y\) we have
%\begin{align*}
%	\frac{
%		T(y)-T(x)-\jac T{\bar x}[y-x]
%	}{\|y-x\|}
%{}={} &
%	\frac{
%		P(F(y))-P(F(x))
%		{}-{}
%		\jac P{F(\bar x)}[{\jac F{\bar x}[y-x]}]
%	}{\|y-x\|}
%{}={} &
%	\phantom{{}+{}}
%	\frac{
%		P(F(y))-P(F(x))
%		{}-{}
%		\jac P{F(\bar x)}[{\jac F{\bar x}[y-x]}]
%	}{\|y-x\|}
%\\
%&
%	{}+{}
%	\frac{
%		P(F(y))-P(F(x))
%		{}-{}
%		\jac P{F(\bar x)}[{\jac F{\bar x}[y-x]}]
%	}{\|y-x\|}
%\end{align*}
%\end{proof}
\end{prop}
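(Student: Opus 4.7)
The plan is to verify strict differentiability directly from the definition, using the expected Jacobian $\jac T{\bar x} = \jac P{F(\bar x)}\,\jac F{\bar x}$ (the same formula as for ordinary differentiability). The idea is to split the residual in the standard telescoping fashion: for $x,y$ near $\bar x$ with $x\neq y$, write
\[
	T(y)-T(x)-\jac P{F(\bar x)}\jac F{\bar x}[y-x]
{}={}
	\underbrace{P(F(y))-P(F(x))-\jac P{F(\bar x)}[F(y)-F(x)]}_{(A)}
	{}+{}
	\underbrace{\jac P{F(\bar x)}\bigl[F(y)-F(x)-\jac F{\bar x}[y-x]\bigr]}_{(B)}.
\]
Dividing by $\|y-x\|$ and showing each piece vanishes as $(x,y)\to(\bar x,\bar x)$ will finish the proof.

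First I would handle term $(B)$: bounding it by $\|\jac P{F(\bar x)}\|\cdot\|F(y)-F(x)-\jac F{\bar x}[y-x]\|$ and dividing by $\|y-x\|$, it tends to $0$ by strict differentiability of $F$ at $\bar x$. Then for term $(A)$, the key preliminary fact is that strict differentiability of $F$ at $\bar x$ implies a \emph{local Lipschitz} estimate: for any $\varepsilon>0$ there is a neighborhood $U$ of $\bar x$ such that
\[
	\|F(y)-F(x)\|
{}\leq{}
	\bigl(\|\jac F{\bar x}\|+\varepsilon\bigr)\|y-x\|
\qquad\text{for all }x,y\in U.
\]
In particular $F$ is continuous at $\bar x$, so $F(x),F(y)\to F(\bar x)$ as $(x,y)\to(\bar x,\bar x)$. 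Given any $\eta>0$, strict differentiability of $P$ at $F(\bar x)$ provides $\delta>0$ such that
\(
	\|P(v)-P(u)-\jac P{F(\bar x)}[v-u]\|\leq \eta\|v-u\|
\)
whenever $u,v$ lie in the $\delta$-ball around $F(\bar x)$. Choosing a neighborhood of $\bar x$ small enough to force $F(x),F(y)$ into that $\delta$-ball, term $(A)$ is bounded by $\eta\|F(y)-F(x)\|\leq \eta(\|\jac F{\bar x}\|+\varepsilon)\|y-x\|$, so $(A)/\|y-x\|$ can be made arbitrarily small.

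The only mild subtlety is the case $F(y)=F(x)$ in applying strict differentiability of $P$ (which requires $u\neq v$): if this happens, $(A)$ is simply zero, so nothing needs to be shown. For ordinary (non-strict) differentiability the same decomposition works by fixing $x=\bar x$ throughout and is the classical chain rule, which we invoke for the first half of the parenthetical claim. I do not expect a genuine obstacle here; the proof is routine once the telescoping split and the local Lipschitz consequence of strict differentiability of $F$ are in place, which is presumably why the authors omit it.
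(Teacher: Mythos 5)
Your proof is correct, and it is precisely the ``trivial computation'' the paper declines to write out: the authors omit the proof of \Cref{prop:StrDiffComp} entirely, and your telescoping decomposition into the terms $(A)$ and $(B)$, together with the local Lipschitz estimate extracted from strict differentiability of $F$, is the standard argument they evidently have in mind. The handling of the degenerate case $F(y)=F(x)$ and the reduction to the classical chain rule for the non-strict half are both sound, so there is nothing to add.
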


Similarly, the product of (strictly) differentiable functions is still (strictly) differentiable.
However, if one of the two functions vanishes at one point, then we may relax some assumptions, as it is proved in the next result.
\begin{prop}\label{prop:StrDiffProd}
Let $Q:\Re^n\to\Re^{m\times k}$ and $R:\Re^n\to\Re^k$, and suppose that $R(\bar x) = 0$.
If $Q$ is continuous at $\bar x$
(resp. Lipschitz-continuous around $\bar x$) and $R$ is differentiable (resp. strictly differentiable) at $\bar x$,
then their product $G:\Re^n \to \Re^m$ defined as $G(x) = Q(x)R(x)$ is differentiable (resp. strictly differentiable) at $\bar x$ with $\jac{G}{\bar x} = Q(\bar x) \jac{R}{\bar x}$.
\begin{proof}
Suppose first that $Q$ is continuous at $\bar x$ and $R$ is differentiable at $\bar x$. Then, expanding \(R(x)\) at \(\bar x\) and since \(G(\bar x)=0\), we obtain
\begin{align*}
	\frac{
		G(x) - G(\bar x) - Q(\bar x)\jac{R}{\bar x}[x-\bar x]
	}{
		\|x-\bar x\|
	}
{}={} & 
	\frac{
		Q(x)R(x)
		{}-{}
		Q(\bar x)\jac{R}{\bar x}[x-\bar x]	
	}{
		\|x-\bar x\|
	}
\\
{}={} &
	\frac{
		\left(
			Q(x)
			{}-{}
			Q(\bar x)
		\right)
		\jac{R}{\bar x}[x-\bar x]
		{}+{}
		o(\|x-\bar x\|)
	}{
		\|x-\bar x\|
	}
\end{align*}
The quantity
\(
	\jac{R}{\bar x}[\frac{x-\bar x}{\|x-\bar x\|}]
\)
is bounded, and continuity of \(Q\) at \(\bar x\) implies that taking the limit for \(\bar x\neq x\to\bar x\) yields \(0\).
This proves that \(G\) is differentiable at \(\bar x\).

Suppose now that $Q$ is Lipschitz-continuous around $\bar x$, and that $R$ is strictly differentiable at $\bar x$.
Then, expanding \(R(y)\) at \(x\) we obtain
\begin{align*}
	\frac{
		G(y) - G(x) - Q(\bar x)\jac{R}{\bar x}[y-x]
	}{
		\|y-x\|
	}
{}={} & 
	\phantom{{}-{}}
	\frac{
		\bigl(
			Q(y)-Q(\bar x)
		\bigr)
		\jac{R}{\bar x}[y-x]
	}{
		\|y-x\|
	}
\\
&
	{}+{}
	\frac{
		\bigl(
			Q(y)-Q(x)
		\bigr)
		R(x)
		{}+{}
		Q(y)o(\|x-y\|)
	}{
		\|y-x\|
	}
\end{align*}
The quantity
\(
	\jac{R}{\bar x}[\vphantom{X^X}\smash{\frac{y-x}{\|y-x\|}}]
\)
is bounded, and by Lipschitz-continuity of \(Q\) at \(\bar x\) so is
\(
	\frac{Q(x)-Q(y)}{\|x-y\|}
\)
for \(x,y\) sufficiently close to \(\bar x\).
Taking the limit for \((\bar x,\bar x)\neq(x,y)\to(\bar x,\bar x)\) with \(x\neq y\) in the above expression then yields \(0\), proving strict differentiability.
Uniqueness of the Jacobian proves also the claimed form of \(\jac G{\bar x}\).
\end{proof}
\end{prop}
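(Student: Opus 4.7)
The plan is to prove both assertions (differentiable and strictly differentiable) by algebraically isolating the Jacobian term $Q(\bar x)\jac{R}{\bar x}$ and then showing that the remainder is small, exploiting the fact that $R(\bar x)=0$ to bypass any differentiability assumption on $Q$. The standard product rule requires both factors to be differentiable, so the whole point of the statement is to trade away differentiability of $Q$ at the cost of having $R$ vanish at $\bar x$ (which makes the missing term $R(\bar x)\,\jac{Q}{\bar x}$ harmless in spirit).

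For the differentiable case, I would write $G(x)-G(\bar x)=Q(x)R(x)=Q(\bar x)R(x)+\bigl(Q(x)-Q(\bar x)\bigr)R(x)$. Using differentiability of $R$ at $\bar x$, together with $R(\bar x)=0$, gives $R(x)=\jac{R}{\bar x}[x-\bar x]+o(\|x-\bar x\|)$, so the first summand equals $Q(\bar x)\jac{R}{\bar x}[x-\bar x]+o(\|x-\bar x\|)$ (absorbing the constant matrix $Q(\bar x)$). For the second summand, note that $\|R(x)\|=O(\|x-\bar x\|)$ by the same expansion, while $Q(x)-Q(\bar x)\to 0$ by continuity, and so $(Q(x)-Q(\bar x))R(x)=o(\|x-\bar x\|)$. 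Combining these two pieces gives the desired expansion and identifies the Jacobian as $Q(\bar x)\jac{R}{\bar x}$.

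For the strict differentiability case, the natural splitting is
\[
	G(y)-G(x)
{}={}
	Q(y)\bigl(R(y)-R(x)\bigr)
	{}+{}
	\bigl(Q(y)-Q(x)\bigr)R(x).
\]
Strict differentiability of $R$ at $\bar x$ yields $R(y)-R(x)=\jac{R}{\bar x}[y-x]+o(\|y-x\|)$ as $(x,y)\to(\bar x,\bar x)$; since $Q$ is bounded in a neighborhood of $\bar x$ (Lipschitz implies locally bounded), multiplying by $Q(y)$ preserves the $o(\|y-x\|)$ remainder. Then I would replace $Q(y)$ by $Q(\bar x)$ in the main term, using Lipschitz continuity: $\|(Q(y)-Q(\bar x))\jac{R}{\bar x}[y-x]\|\leq L\|y-\bar x\|\|\jac{R}{\bar x}\|\|y-x\|$, which is $o(\|y-x\|)$ as $y\to \bar x$. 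For the cross-term $(Q(y)-Q(x))R(x)$, Lipschitz continuity of $Q$ gives $\|Q(y)-Q(x)\|\leq L\|y-x\|$, and since $R(\bar x)=0$ together with (strict) differentiability forces $\|R(x)\|=O(\|x-\bar x\|)\to 0$, the product is $o(\|y-x\|)$. Summing these bounds yields the strict-differentiability limit.

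The main obstacle is the strict case, because two points $x$ and $y$ vary independently and one needs a quantitative rate for both $Q(y)-Q(x)\to 0$ and $R(x)\to 0$ to make the cross-term small. This is exactly where the asymmetric hypotheses become essential: ordinary continuity of $Q$ would only give $\|Q(y)-Q(x)\|=o(1)$, which is not enough to beat the $\|y-x\|$ in the denominator, whereas local Lipschitzness of $Q$ provides the missing factor of $\|y-x\|$, and the hypothesis $R(\bar x)=0$ provides a factor of $\|x-\bar x\|$ (in place of mere boundedness). Uniqueness of the Jacobian, should it need to be invoked, identifies the derivative with the formula $Q(\bar x)\jac{R}{\bar x}$ already obtained in the differentiable part.
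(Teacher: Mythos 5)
Your proposal is correct and follows essentially the same route as the paper: in both cases you expand $R$ (at $\bar x$, resp.\ at $x$ using strict differentiability), isolate the cross-term $(Q(y)-Q(x))R(x)$, and kill it using the Lipschitz bound on $Q$ together with $R(x)\to R(\bar x)=0$; the only difference is a trivially rearranged grouping of the error terms in the first part. No gaps.
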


\begin{defin}
A mapping $G:\Re^n\to\Re^m$ is said to be \DEF{semidifferentiable} (or \DEF{$B$-differentiable} \cite{pang1990newton,ip1992local}) at a point $\bar x\in\Re^n$ if there exists a positively homogeneous mapping
\(
	\Bjac G{\bar x}[{}\cdot{}]:\Re^n\to\Re^m
\)
such that
\[
	\lim_{x\to\bar x}{
		\frac{
			\|G(x)-G(\bar x)-\Bjac G{\bar x}[x-\bar x]\|
		}{\|x-\bar x\|}
	}
{}={}
	0.
\]
It is \DEF{strictly semidifferentiable} at $\bar x$ if the stronger limit holds
\[
	\lim_{
		\substack{
			(x,y)\to(\bar x,\bar x)\\
			x\neq y
		}
	}{
		\frac{\|G(y)-G(x)-\Bjac G{\bar x}[y-x]\|}{\|y-x\|}
	}
{}={}
	0.
\]
$\Bjac G{\bar x}$ is called \DEF{semiderivative} of $G$ at $\bar x$. If $G$ is (strictly) semidifferentiable
at every point of a set $S$, then it is said to be (strictly) semidifferentiable in $S$.
\end{defin}

%\begin{prop}
%Suppose that $G:\Re^n\to\Re^m$ is Lipschitz-continuous in a neighborhood of \(\bar x\in\Re^n\).
%Then, the following hold:
%\begin{enumprop}
%	\item\label{Prop:DirDerSemider} \(G\) is semidifferentiable at \(\bar x\) if and only if it is directionally differentiable at $\bar x$, in which case \(\Bjac G{\bar x}[\cdot] = \dirder G{\bar x}[\cdot]\);
%	\item\label{Prop:LinearSemider} $G$ is (Fr\'echet) differentiable at $\bar x$ if and only if \(G\) is semidifferentiable at $\bar x$ and $\Bjac G{\bar x}[d]$ is linear in its second argument, in which case $\Bjac G{\bar x}[d] = \jac G{\bar x}[d] = \jac G{\bar x}d$.
%\end{enumprop}
%\begin{proof}
%\ref{Prop:DirDerSemider} follows from \cite[Def. 3.1.2 and Prop. 3.1.3]{facchinei2003finite}, and
%\ref{Prop:LinearSemider} is trivial from the definition.
%\end{proof}
%\end{prop}

\begin{prop}[{\!\!\cite[Thm. 2]{pang1990newton}}]
Suppose that $G:\Re^n\to\Re^m$ is semidifferentiable in a neighborhood of \(\bar x\in\Re^n\).
Then, the following are equivalent:
\begin{enumprop}[{label=(\alph*)},{ref=\theprop(\alph*)}]
	\item\label{Thm:ContSemidiff}
		$\Bjac G{\cdot}[d]$ is continuous in its first argument at \(\bar x\) for all \(d\in\Re^n\);
	\item
		$G$ is strictly semidifferentiable at $\bar x$;
	\item
		$G$ is strictly (Fr\'echet) differentiable at $\bar x$.
\end{enumprop}
\end{prop}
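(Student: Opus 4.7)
The plan is to establish the implications \ref{Thm:ContSemidiff}~\(\Rightarrow\)~(b)~\(\Rightarrow\)~(c)~\(\Rightarrow\)~\ref{Thm:ContSemidiff}. The fact that (c)~\(\Rightarrow\)~(b) is the easiest: strict Fréchet differentiability means that
\(
	\|G(y)-G(x)-\jac G{\bar x}[y-x]\|=o(\|y-x\|)
\)
uniformly as \((x,y)\to(\bar x,\bar x)\), and since \(\jac G{\bar x}\) is linear hence positively homogeneous, it qualifies as the semiderivative at \(\bar x\) (which is unique as soon as it exists, by semidifferentiability of \(G\) around \(\bar x\)). Hence \(\Bjac G{\bar x}=\jac G{\bar x}\) and strict semidifferentiability follows immediately.

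Next I would prove (b)~\(\Rightarrow\)~\ref{Thm:ContSemidiff} by a fixed-\(d\) limit argument. Given \(\varepsilon>0\), strict semidifferentiability furnishes \(\delta>0\) such that
\[
	\|G(x+td)-G(x)-\Bjac G{\bar x}[td]\|
{}\leq{}
	\varepsilon t\|d\|
\]
whenever \(x, x+td\in B(\bar x,\delta)\) and \(t>0\). Dividing by \(t\) and letting \(t\downarrow 0\), the left-hand side converges to \(\|\Bjac G{x}[d]-\Bjac G{\bar x}[d]\|\) because \(G\) is semidifferentiable at \(x\). Thus \(\|\Bjac G{x}[d]-\Bjac G{\bar x}[d]\|\leq\varepsilon\|d\|\) for every \(x\in B(\bar x,\delta)\), which yields \ref{Thm:ContSemidiff}.

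The main obstacle is \ref{Thm:ContSemidiff}~\(\Rightarrow\)~(c). I would first establish linearity of \(\Bjac G{\bar x}\): applying positive homogeneity along the rays \(x=\bar x+tu\), \(y=\bar x+tv\) with \(t\downarrow 0\), combined with the semidifferentiability expansions at \(\bar x\) and with continuity of \(\Bjac G{\cdot}[d]\) at \(\bar x\) (which pins down the common limit \(\Bjac G{\bar x}[v-u]\)), one obtains the identity \(\Bjac G{\bar x}[v-u]=\Bjac G{\bar x}[v]-\Bjac G{\bar x}[u]\) for all \(u,v\in\Re^n\); together with positive homogeneity this yields linearity of \(\Bjac G{\bar x}\).

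To upgrade pointwise differentiability to \emph{strict} differentiability, I would use a mean-value representation along the segment \(\gamma(s)=x+s(y-x)\), \(s\in[0,1]\), for \(x,y\) close to \(\bar x\). The scalar map \(s\mapsto G(\gamma(s))\) is right-semidifferentiable with one-sided derivative \(\Bjac G{\gamma(s)}[y-x]\), and a Denjoy-Bourbaki type mean value inequality gives
\[
	G(y)-G(x)-\Bjac G{\bar x}[y-x]
{}={}
	\int_0^1
	\bigl(
		\Bjac G{\gamma(s)}[y-x]
		{}-{}
		\Bjac G{\bar x}[y-x]
	\bigr)
	\,\mathrm{d}s.
\]
Bounding the integrand requires uniform control of \(\|\Bjac G{x}-\Bjac G{\bar x}\|\) on the unit sphere: pointwise continuity must be upgraded to uniform continuity in \(d\). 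This is the crux of the argument, and it rests on the fact that semidifferentiability of \(G\) in a neighborhood of \(\bar x\), together with \ref{Thm:ContSemidiff}, forces \(G\) to be locally Lipschitz around \(\bar x\); consequently each \(\Bjac G{x}[\cdot]\) is Lipschitz in \(d\) with a common constant, so pointwise convergence on a dense subset of the unit sphere extends to uniform convergence. With this in hand, the integrand is \(o(\|y-x\|)\) uniformly as \((x,y)\to(\bar x,\bar x)\) with \(x\neq y\), yielding strict Fréchet differentiability and closing the cycle.
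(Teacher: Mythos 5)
The paper offers no proof of this proposition; it is quoted from Pang's Theorem~2, so your attempt can only be measured against that source. Your two easy implications are correct: for (c)~\(\Rightarrow\)~(b), uniqueness of the semiderivative forces \(\Bjac G{\bar x}=\jac G{\bar x}\) and the strict limit transfers verbatim; for (b)~\(\Rightarrow\)~(a), dividing the strict estimate along \(y=x+td\) by \(t\) and letting \(t\downarrow0\) is exactly right. The gap is in (a)~\(\Rightarrow\)~(c). You correctly identify the crux --- upgrading convergence of \(\Bjac Gx[d]\to\Bjac G{\bar x}[d]\) for each \emph{fixed} \(d\) to convergence that is \emph{uniform} over \(d\) in the unit sphere --- but you resolve it by asserting that semidifferentiability in a neighborhood together with (a) ``forces \(G\) to be locally Lipschitz around \(\bar x\).'' No argument is given, and the natural one is circular: the only visible route from semidifferentiability to a Lipschitz bound is the mean value inequality fed with a uniform bound \(\sup_{\|d\|=1}\|\Bjac Gx[d]\|\leq M\) for all \(x\) near \(\bar x\), and such a bound uniform over the sphere is precisely the \(d\)-uniformity at issue; condition (a) is pointwise in \(d\), so each direction yields a bound only on a \(d\)-dependent neighborhood of \(\bar x\), and without the equicontinuity you are trying to establish, compactness of the sphere cannot make that neighborhood uniform. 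In Pang's formulation this difficulty does not arise because \(B\)-differentiability in Robinson's sense \emph{includes} local Lipschitz continuity by definition; with that hypothesis added, your chain (common Lipschitz constant for the semiderivatives \(\Rightarrow\) equicontinuity on the sphere \(\Rightarrow\) pointwise convergence upgrades to uniform \(\Rightarrow\) mean value inequality closes the strict estimate) does go through. As written, the pivotal step is unproved.

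Two smaller issues. Your linearity argument is run in the wrong order: to obtain \(\Bjac G{\bar x}[v-u]=\Bjac G{\bar x}[v]-\Bjac G{\bar x}[u]\) you must expand \(G(\bar x+tv)-G(\bar x+tu)\) around the moving base point \(\bar x+tu\) with an error that is \(o(t)\) uniformly in \(t\), which is exactly the strict estimate not yet available at that stage; additivity should instead be extracted \emph{after} the strict semidifferentiability estimate, via the three-point expansion at \(\bar x\), \(\bar x+tu\), \(\bar x+t(u+v)\). Also, the exact integral identity you write for \(G(y)-G(x)-\Bjac G{\bar x}[y-x]\) is stronger than what a Denjoy--Bourbaki argument yields for a map that is merely right-differentiable along the segment; the correct and sufficient statement is the inequality \(\|G(y)-G(x)-\Bjac G{\bar x}[y-x]\|\leq\sup_{s\in[0,1]}\|\Bjac G{\gamma(s)}[y-x]-\Bjac G{\bar x}[y-x]\|\).
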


\begin{prop}\label{prop:Calm}
Suppose that $G:\Re^n\to\Re^m$ is semidifferentiable in a neighborhood $N$ of \(\bar x\) and that \(\Bjac G{}\) is \DEF{calm} at $\bar x$, \ie, there exists $L>0$ such that, for all $x\in N$ and $d\in \OurSpace$ with $\|d\|=1$,
\[
	\|\Bjac Gx[d]-\Bjac G{\bar x}[d]\| \leq L\|x-\bar x\|.
\]
Then,
\[
	\|G(x)-G(y)-\Bjac G{\bar x}[x-y]\|
{}\leq{}
	L\max\set{}{\|x-\bar x\|, \|y-\bar x\|}
	\|x-y\|
\]
\begin{proof}
Follows from \cite[Lem. 2.2]{ip1992local} by observing that the assumption of Lipschitz-continuity may be relaxed to calmness.
\end{proof}
\end{prop}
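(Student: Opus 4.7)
The statement is essentially \cite[Lem.~2.2]{ip1992local} with a slight weakening of the regularity hypothesis on $\Bjac G{}$, and I will follow that proof while tracking where the Lipschitz assumption is actually needed. The main idea is to reduce the estimate to a mean-value-type inequality along the segment joining $x$ and $y$.

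\textbf{First step: extend calmness to arbitrary directions.} By positive homogeneity of $\Bjac Gx[\cdot]$ and $\Bjac G{\bar x}[\cdot]$, the hypothesis implies that for every $x\in N$ and every $d\in\Re^n$,
\[
    \|\Bjac Gx[d]-\Bjac G{\bar x}[d]\|
    \leq L\|x-\bar x\|\,\|d\|,
\]
obtained by normalizing $d$ and scaling back. This is the only place where the pointwise calmness at $\bar x$ is used, and it shows that replacing full Lipschitz continuity of $\Bjac G{}$ by calmness at $\bar x$ is enough for the remainder of the argument.

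\textbf{Second step: mean-value argument along the segment.} Fix $x,y\in N$ and define $\phi:[0,1]\to\Re^m$ by
\[
    \phi(t)=G(x+t(y-x))-t\,\Bjac G{\bar x}[y-x].
\]
Since $G$ is semidifferentiable on $N$, the chain rule for semiderivatives with respect to the affine reparametrization $t\mapsto x+t(y-x)$ gives
\[
    \phi'_+(t)=\Bjac G{x+t(y-x)}[y-x]-\Bjac G{\bar x}[y-x].
\]
By the extended calmness bound from the first step and the estimate $\|x+t(y-x)-\bar x\|\leq\max\{\|x-\bar x\|,\|y-\bar x\|\}$ valid for $t\in[0,1]$, we obtain
\[
    \|\phi'_+(t)\|\leq L\max\{\|x-\bar x\|,\|y-\bar x\|\}\,\|y-x\|.
\]
A one-dimensional mean-value inequality applied to $\phi$ then yields $\|\phi(1)-\phi(0)\|$ bounded by the right-hand side, which is exactly the claimed estimate.

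\textbf{Main obstacle and how to handle it.} The subtle point is justifying the passage from a bound on the (one-sided) semiderivative of $\phi$ to a bound on $\phi(1)-\phi(0)$ without assuming $\phi$ is continuously differentiable. The cleanest way is to note that semidifferentiability together with calmness of $\Bjac G{}$ at $\bar x$ forces $G$ to be locally Lipschitz around $\bar x$ (the semiderivative is pointwise bounded and $G-G(\bar x)-\Bjac G{\bar x}[\cdot-\bar x]$ is $o(\|\cdot-\bar x\|)$, with $\Bjac G{\bar x}$ positively homogeneous and hence locally bounded), so $\phi$ is absolutely continuous on $[0,1]$, and its classical derivative exists a.e.\ and agrees with $\phi'_+$ almost everywhere. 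Standard absolute-continuity arguments then give $\|\phi(1)-\phi(0)\|\leq\int_0^1\|\phi'_+(t)\|\,dt$, which closes the proof. This replaces the Lipschitz ingredient of \cite[Lem.~2.2]{ip1992local} at no cost in the final estimate.
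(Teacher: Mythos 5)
Your argument is correct and is essentially the paper's proof made explicit: the paper simply invokes \cite[Lem.~2.2]{ip1992local} and notes that Lipschitz continuity of $\Bjac G{}$ can be relaxed to calmness at $\bar x$, which is exactly the observation you verify by reproving that lemma along the segment $[x,y]$, where only the values $\|\Bjac G{x+t(y-x)}[d]-\Bjac G{\bar x}[d]\|$ are needed. One small remark: your detour through local Lipschitzness and absolute continuity is unnecessary (and your parenthetical justification of local Lipschitzness from calmness of $G$ at $\bar x$ alone is not quite a proof); since $G$ is continuous on $N$ and $\phi$ has everywhere-defined right derivatives bounded by $L\max\{\|x-\bar x\|,\|y-\bar x\|\}\|y-x\|$, the classical one-sided mean value inequality applied to $t\mapsto\innprod{u}{\phi(t)}$ for unit vectors $u$ closes the estimate directly.
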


	\section{\appendixProofTitle{SEC:FBE}}
\begin{appendixproof}{lem:DiffFPR}
We know from \cite[Thms. 3.8, 4.1]{poliquin1996generalized} that
$\prox_{\gamma g}$ is (strictly) differentiable at $x-\gamma\nabla f(x)$
if and only if $g$ satisfies \Cref{Ass:GenQuad} (\Cref{Ass:Strict2Epi}) at $x$ for $-\nabla f(x)$.
Since $f\in\cont^2$ by assumption, then in particular $\nabla f$ is strictly differentiable.
The formula \eqref{eq:JacFPR} follows from \Cref{prop:StrDiffComp} with
$P = \prox_{\gamma g}$ and $F(x) = x - \gamma \nabla f(x)$.

Matrix $Q_\gamma(x)$ is symmetric since $f\in\cont^2$ and positive definite if $\gamma < 1/L_f$.
To obtain an expression for $P_\gamma(x) = \jac{\prox_{\gamma g}}{x-\gamma\nabla f(x)}$ we can apply \cite[Ex. 13.45]{rockafellar2011variational} to the \emph{tilted} function $g+\innprod{\nabla f(x)}{{}\cdot{}}$ so that, letting
$\twiceepi g{} =\twiceepi[-\nabla f(x)]gx[{}\cdot{}]$ and \(\proj{S}\) the idempotent and symmetric projection matrix on \(S\),
\begin{align*}
	P_\gamma(x)d
{}={}&
	\prox_{(\gamma/2)\twiceepi g{}}(d)
\\
{}={}&
	\argmin_{d'\in S}{
		\set{}{
			\tfrac12\innprod{d'}{Md'} + \tfrac{1}{2\gamma}\|d' - d\|^2
		}
	}
\\
{}={}&
	\proj{S}
	\argmin_{d'\in\Re^n}{
		\set{}{
			\tfrac12\innprod{\Pi_S d'}{M \Pi_S d'} + \tfrac{1}{2\gamma}\|\Pi_S d' - d\|^2
		}
	}
\\
{}={}&
	\Pi_S
	{\bigl(
		\Pi_S[I+\gamma M]\Pi_S
	\bigr)}^\dagger
	\Pi_S d
\\
{}={}&
	\Pi_S
	[I+\gamma M]^{-1}
	\Pi_S d
\end{align*}
where $^\dagger$ indicates the pseudo-inverse, and last equality is due to \cite[Facts 6.4.12(i)-(ii) and 6.1.6(xxxii)]{bernstein2009matrix} and the properties of $M$ as stated in \Cref{Ass:GenQuad}.
Apparently $P_\gamma(x)\succeq 0$ is symmetric, with $\|P_\gamma(x)\| \leq 1$.
\end{appendixproof}

%%%%%%%%%%%%%%%%%%%%%%%%%%%%%%%%%%%%%%%%%%%%%%%%%%%%%%%%%%%%%%%%%%%%%%%%%%%%%%%%
%%%%%%%%%%%%%%%%%%%%%%%%%%%%%%%%%%%%%%%%%%%%%%%%%%%%%%%%%%%%%%%%%%%%%%%%%%%%%%%%
%%%%%%%%%%%%%%%%%%%%%%%%%%%%%%%%%%%%%%%%%%%%%%%%%%%%%%%%%%%%%%%%%%%%%%%%%%%%%%%%
%%%%%%%%%%%%%%%%%%%%%%%%%%%%%%%%%%%%%%%%%%%%%%%%%%%%%%%%%%%%%%%%%%%%%%%%%%%%%%%%
\begin{appendixproof}{thm:2ndOrder}
If follows from \Cref{thm:DiffGradFBE} that the Hessian $\nabla^2\varphi_\gamma(x)$ exists and is symmetric.
Moreover, from \cite[Ex. 13.18]{rockafellar2011variational} we know that for all $d\in\Re^n$
\begin{align}
\notag
\twiceepi[0]{\varphi}{x}[d] &= \innprod{d}{\nabla^2 f(x)d} + \twiceepi[-\nabla f(x)]{g}{x}[d] \\
\label{Eq:GenQuad}
&= \innprod{d}{\nabla^2 f(x)d} + \innprod{d}{Md} + \delta_S(d).
\end{align}

\ref{it:StrLocMin2} $\Leftrightarrow$ \ref{it:StrLocMin1}:
Follows directly from \eqref{Eq:GenQuad}, using \cite[Thm. 13.24(c)]{rockafellar2011variational}.

\ref{it:StrLocMin3} $\Leftrightarrow$ \ref{it:StrLocMin4}:
Letting $Q = Q_\gamma(x)$, we see from \eqref{eq:JacFPR} and \eqref{eq:HessFBE} that $\jac{R_\gamma}{x}$
is similar to the symmetric matrix $Q^{-1/2}\nabla^2\varphi_\gamma(x)Q^{-1/2}$, which is positive definite if and only if $\nabla^2\varphi_\gamma(x)$ is.

\ref{it:StrLocMin1} $\Leftrightarrow$ \ref{it:StrLocMin3}:
From the point above we know that $\jac{R_\gamma}{x}$ has all real eigenvalues,
and it can be easily seen to be similar to $\gamma^{-1}(I-QP)$, where $P = P_\gamma(x)$.
From \cite[Theorem 7.7.3]{horn2012matrix} it follows that $\lambda_\min(I-QP) > 0$ if and only if $Q^{-1} \succ P$.
For all \(d\in S\), using \eqref{eq:JacP} we have
\begin{align*}
	\innprod{d}{(Q^{-1}-P)d}
{}={} &
	\innprod{d}{Q^{-1}d}
	{}-{}
	\innprod{d}{
		\Pi_S
		[I+\gamma M]^{-1}
		\Pi_Sd
	}
\\
{}={} &
	\innprod{d}{Q^{-1}d}
	{}-{}
	\innprod{\Pi_Sd}{
		[I+\gamma M]^{-1}
		\Pi_Sd
	}
\\
{}={} &
	\innprod{d}{Q^{-1}d}
	{}-{}
	\innprod{d}{
		[I+\gamma M]^{-1}
		d
	}
\end{align*}
and last quantity is positive if and only if $I+\gamma M\succ Q$ on $S$.
By definition of $Q$, we then have that this holds if and only if
$
	\nabla^2 f(x)+M\succ0
$
on $S$, which is \ref{it:StrLocMin1}.

\ref{it:StrLocMin4} $\Leftrightarrow$ \ref{it:StrLocMin5}:
Trivial since $\nabla^2\varphi_\gamma(x)$ exists.
\end{appendixproof}

	\section{\appendixProofTitle{SEC:Algorithms}}
		The following results are instrumental in proving convergence of the iterates of \Cref{alg:Global}.

\begin{lem}\label{le:diffx2Rwx}
Under \Cref{Ass:fg}, consider the sequences $\seq{x^k}$ and $\seq{w^k}$ generated by \Cref{alg:Global}.
If there exist $\bar{\tau},c>0$ such that $\tau_k\leq\bar{\tau}$ and $\|d^k\|\leq c\|R_{\gamma_k}(x^k)\|$, then
\begin{align}
\label{eq:diffx2Rwx}
	\|x^{k+1}-x^k\|
{}\leq{} &
	\gamma_k\|R_{\gamma_k}(w^k)\|
	{}+{}
	\bar{\tau}c\|R_{\gamma_k}(x^k)\|
\qquad
	\forall k\in\Nn
\intertext{and, for \(k\) large enough,}
\label{eq:diffx2Rw}
	\|x^{k+1}-x^k\|
{}\leq{} &
	\gamma_k\|R_{\gamma_k}(w^k)\|
	{}+{}
	\bar{\tau}c(1+\gamma_k L_f)\|R_{\gamma_{k-1}}(w^{k-1})\|
\end{align}
\end{lem}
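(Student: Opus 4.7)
The approach is to unfold the two updates that define \(x^{k+1}\) in \Cref{alg:Global}, namely \(w^k = x^k + \tau_k d^k\) and \(x^{k+1} = T_{\gamma_k}(w^k) = w^k - \gamma_k R_{\gamma_k}(w^k)\), so that
\[
	x^{k+1} - x^k = \tau_k d^k - \gamma_k R_{\gamma_k}(w^k).
\]
The first bound \eqref{eq:diffx2Rwx} then drops out immediately from the triangle inequality together with the two standing assumptions \(\tau_k \leq \bar\tau\) and \(\|d^k\|\leq c\|R_{\gamma_k}(x^k)\|\), giving \(\|x^{k+1}-x^k\|\leq \gamma_k\|R_{\gamma_k}(w^k)\| + \bar\tau c\|R_{\gamma_k}(x^k)\|\).

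For the sharper bound \eqref{eq:diffx2Rw}, the plan is to replace the factor \(\|R_{\gamma_k}(x^k)\|\) by a multiple of \(\|R_{\gamma_{k-1}}(w^{k-1})\|\). By \Cref{lem:LowerBoundGamma}, the stepsize sequence \(\seq{\gamma_k}\) is eventually constant; for all \(k\) large enough write simply \(\gamma = \gamma_k = \gamma_{k-1}\). Because of the update in step~\ref{step:GlobalFB}, one has \(x^k = T_\gamma(w^{k-1}) = w^{k-1} - \gamma R_\gamma(w^{k-1})\), and therefore
\[
	\|x^k - w^{k-1}\| = \gamma\|R_\gamma(w^{k-1})\|.
\]
Using the definition \(R_\gamma(x) = \gamma^{-1}(x - \prox_{\gamma g}(x-\gamma\nabla f(x)))\) together with nonexpansiveness of \(\prox_{\gamma g}\) and \(L_f\)-Lipschitz continuity of \(\nabla f\), one estimates
\[
	\gamma\|R_\gamma(x^k)\|
{}={}
	\|x^k - \prox_{\gamma g}(x^k - \gamma\nabla f(x^k))\|
{}\leq{}
	(1+\gamma L_f)\|x^k - w^{k-1}\|,
\]
where the right-hand side appears because \(x^k\) itself equals \(\prox_{\gamma g}(w^{k-1}-\gamma\nabla f(w^{k-1}))\). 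Combining with the previous identity yields the key inequality
\[
	\|R_{\gamma_k}(x^k)\| \leq (1+\gamma_k L_f)\|R_{\gamma_{k-1}}(w^{k-1})\|,
\]
and substituting into \eqref{eq:diffx2Rwx} delivers \eqref{eq:diffx2Rw}.

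The only delicate step is the last estimate: one has to exploit the fact that \(x^k\) is \emph{itself} a proximal-gradient image of \(w^{k-1}\) with the \emph{same} stepsize, which is where \Cref{lem:LowerBoundGamma} (and the restriction to large \(k\)) enters. Once stepsize stabilization is invoked, the bound is routine from nonexpansiveness of the proximal operator and the descent-lemma Lipschitz property of \(\nabla f\); no sharper nonexpansion property of \(R_\gamma\) is needed.
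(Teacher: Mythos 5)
Your proposal is correct and follows essentially the same route as the paper: the same decomposition $x^{k+1}-x^k=\tau_k d^k-\gamma_k R_{\gamma_k}(w^k)$ with the triangle inequality for \eqref{eq:diffx2Rwx}, and for \eqref{eq:diffx2Rw} the same key observation that, once \Cref{lem:LowerBoundGamma} stabilizes the stepsize, $x^k=T_{\gamma_k}(w^{k-1})$, so nonexpansiveness of $\prox_{\gamma g}$ and Lipschitz continuity of $\nabla f$ give $\|R_{\gamma_k}(x^k)\|\leq(1+\gamma_k L_f)\|R_{\gamma_{k-1}}(w^{k-1})\|$. No gaps.
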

\begin{proof}
Equation \eqref{eq:diffx2Rwx} follows simply by
$$\|x^{k+1}-x^k\|=
	\|x^{k+1}-w^k+\tau_kd^k\|\leq\gamma_k\|R_{\gamma_k}(w^k)\|+\bar{\tau}c\|R_{\gamma_k}(x^k)\|.$$
Now, for $k$ sufficiently large $\gamma_k = \gamma_{k-1} = \gamma_\infty > 0$,
see \Cref{lem:LowerBoundGamma}, and
\begin{align*}
\|R_{\gamma_{k}}(x^{k})\|&=\gamma_k^{-1}\|x^{k}-T_{\gamma_k}(x^{k})\|\\
	&=		\gamma_k^{-1}\|T_{\gamma_k}(w^{k-1})-T_{\gamma_k}(x^{k})\|\\
	&\leq	\gamma_k^{-1}\|w^{k-1}-\gamma_k\nabla f(w^{k-1})-x^{k}+\gamma_k\nabla f(x^{k})\|\\
	&\leq	\gamma_k^{-1}\|w^{k-1}-x^{k}\|+\|\nabla f(w^{k-1})-\nabla f(x^{k})\|\\
	&\leq	(1+\gamma_k L_f)\|R_{\gamma_{k-1}}(w^{k-1})\|,
\end{align*}
where the first inequality follows from nonexpansiveness of $\prox_{\gamma g}$, and the last one
from Lipschitz continuity of $\nabla f$.
Putting this together with \eqref{eq:diffx2Rwx} gives \eqref{eq:diffx2Rw}.
\end{proof}

\begin{lem}\label{le:finiteLength}
Let $\seq{\beta_k}$ and $\seq{\delta_k}$ be real sequences satisfying $\beta_k\geq 0$, $\delta_k\geq 0$, $\delta_{k+1}\leq\delta_k$ and $\beta_{k+1}^2\leq (\delta_k-\delta_{k+1})\beta_k$ for all $k\in\Nn$. Then $\sum_{k=0}^\infty\beta_k<\infty$.
\end{lem}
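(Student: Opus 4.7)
The plan is to bound $\beta_{k+1}$ by an arithmetic-mean/geometric-mean estimate, then sum and exploit the telescoping of $\delta_k-\delta_{k+1}$.

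First, from the hypothesis $\beta_{k+1}^2\leq(\delta_k-\delta_{k+1})\beta_k$ (which is meaningful since $\delta_{k+1}\leq\delta_k$ guarantees the right-hand side is nonnegative), Young's inequality $\sqrt{ab}\leq\tfrac12 a+\tfrac12 b$ applied to $a=\delta_k-\delta_{k+1}$ and $b=\beta_k$ yields
\[
	\beta_{k+1}
{}\leq{}
	\sqrt{(\delta_k-\delta_{k+1})\beta_k}
{}\leq{}
	\tfrac12(\delta_k-\delta_{k+1})+\tfrac12\beta_k.
\]

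Next, I would sum this inequality from $k=0$ to $k=K$. The first term telescopes down to $\tfrac12(\delta_0-\delta_{K+1})\leq\tfrac12\delta_0$ (using $\delta_{K+1}\geq 0$), while the second term gives $\tfrac12\sum_{k=0}^K\beta_k$. Letting $S_K\eqdef\sum_{k=0}^K\beta_k$, this produces
\[
	S_{K+1}-\beta_0
{}\leq{}
	\tfrac12\delta_0+\tfrac12 S_K.
\]

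Finally, since $\beta_{K+1}\geq 0$ implies $S_K\leq S_{K+1}$, I would substitute to obtain $S_K\leq \beta_0+\tfrac12\delta_0+\tfrac12 S_K$, that is $S_K\leq 2\beta_0+\delta_0$ for every $K\in\Nn$. The partial sums are thus uniformly bounded, and since the terms $\beta_k$ are nonnegative, the series $\sum_{k=0}^\infty\beta_k$ converges. There is no real obstacle here; the only subtlety is the nonnegativity bookkeeping, making sure $\delta_k-\delta_{k+1}\geq 0$ so that the square root is well-defined and that $S_K$ is monotone so the self-referential bound closes.
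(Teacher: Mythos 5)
Your proof is correct and follows essentially the same route as the paper: apply $\sqrt{ab}\leq\tfrac12(a+b)$ to the hypothesis, sum, telescope the $\delta$-differences, and absorb the common $\beta$-sum to get a uniform bound on the partial sums. The only cosmetic difference is that the paper cancels $\sum\beta_{i+1}$ from both sides directly (yielding the bound $\delta_0+\beta_0$ on the tail sum), while you close the self-referential bound via $S_K\leq S_{K+1}$, arriving at the slightly looser but equally valid bound $2\beta_0+\delta_0$.
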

\begin{proof}
Taking the square root of both sides in $\beta_{i+1}^2\leq (\delta_i-\delta_{i+1})\beta_i$  and using
$$\sqrt{\zeta\eta}\leq(\zeta+\eta)/2,$$
for any nonnegative numbers $\zeta$, $\eta$, we arrive at $2\beta_{i+1}\leq(\delta_i-\delta_{i+1})+\beta_i$. Summing up the latter for $i=0,\ldots,k$, for any $k\in\Nn$,
\begin{align*}
2\textstyle{\sum_{i=0}^k}\beta_{i+1}&\leq \textstyle{\sum_{i=0}^k}(\delta_i-\delta_{i+1})+\textstyle{\sum_{i=0}^k}\beta_{i}\\
&=\delta_0-\delta_{k+1}+\beta_0-\beta_{k+1}+\textstyle{\sum_{i=0}^k}\beta_{i+1}\\
&\leq\delta_0+\beta_0+\textstyle{\sum_{i=0}^k}\beta_{i+1}.
\end{align*}
Hence
\begin{equation}\sum_{i=0}^\infty\beta_{i+1}\leq\delta_0+\beta_0<\infty,\label{eq:finiteLength}\end{equation}
which concludes the proof.
\end{proof}

\begin{prop}\label{prop:diffx2sum}
Suppose \Cref{Ass:fg} is satisfied and that \(\varphi\) is lower bounded, and consider the sequences generated by \Cref{alg:Global}.
If $\beta\in (0,1)$ and there exist $\bar{\tau},c>0$ such that $\tau_k\leq\bar{\tau}$ and $\|d^k\|\leq c\|R_{\gamma_k}(x^k)\|$ then
\begin{equation}\label{Eq:diffx2sum}
	\sum_{k=0}^\infty\|x^{k+1}-x^k\|^2<\infty.
\end{equation}
If moreover $\seq{x^k}$ is bounded, then
\begin{equation}\label{Eq:omega_props}
	\lim_{k\to\infty}\dist_{\omega(x^0)}(x^k)=0
\end{equation}
and $\omega(x^0)$ is a nonempty, compact and connected subset of $\zer\partial\varphi$ over which $\varphi$ is constant.
\end{prop}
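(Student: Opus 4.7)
The overall plan is to couple the step-length estimate of \Cref{le:diffx2Rwx} with the square-summability of the residuals from \Cref{prop:GlobalDesc} to obtain \eqref{Eq:diffx2sum}, and then to exploit the resulting vanishing of consecutive increments, together with boundedness, to pin down the topological and variational structure of \(\omega(x^0)\).

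For \eqref{Eq:diffx2sum} I would square the bound in \Cref{le:diffx2Rwx} and use \((a+b)^2\leq 2a^2+2b^2\) to get
\[
\|x^{k+1}-x^k\|^2 \leq 2\gamma_k^2\|R_{\gamma_k}(w^k)\|^2 + 2\bar{\tau}^2c^2\|R_{\gamma_k}(x^k)\|^2.
\]
Since \(\seq{\gamma_k}\) is nonincreasing and hence uniformly bounded by \(\gamma_0\), summability reduces to square-summability of the two residual sequences. The hypothesis \(\beta>0\) together with lower boundedness of \(\varphi\) makes this exactly the content of \Cref{prop:Rxl2,prop:Rwl2}.

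For the properties of \(\omega(x^0)\), square-summability forces \(\|x^{k+1}-x^k\|\to 0\), which paired with boundedness of \(\seq{x^k}\) is a classical sufficient condition for the set of cluster points to be nonempty, compact, and connected (Bolzano--Weierstrass for nonemptiness; intersection of nonempty nested closures of tails for compactness; the vanishing-step property rules out disconnection by any positive gap, since the iterates would otherwise have to jump across it). Inclusion in \(\zer\partial\varphi\) is simply \Cref{prop:ClusterCritical}.

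The hard part will be establishing constancy of \(\varphi\) on \(\omega(x^0)\), since \(g\) is only lower semicontinuous and I need genuine convergence \(\varphi(x^{k_j})\to\varphi(x_\star)\) along cluster-point-converging subsequences. \Cref{prop:FunDecr} plus lower boundedness gives \(\varphi(x^k)\downarrow\varphi_\infty\in\Re\); fixing \(x_\star\in\omega(x^0)\) with \(x^{k_j}\to x_\star\), the bounds \(\|w^{k_j}-x^{k_j}\|\leq\bar{\tau}c\|R_{\gamma_{k_j}}(x^{k_j})\|\) and \(\|x^{k_j+1}-w^{k_j}\|=\gamma_{k_j}\|R_{\gamma_{k_j}}(w^{k_j})\|\) both vanish (the residuals go to zero by square-summability, and \(\gamma_{k_j}\leq\gamma_0\)), so \(w^{k_j},x^{k_j+1}\to x_\star\) as well. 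The key idea is to test the proximal minimality defining \(x^{k_j+1}=T_{\gamma_{k_j}}(w^{k_j})\) against the competitor \(x_\star\), which yields
\[
g(x^{k_j+1}) + \tfrac{1}{2\gamma_{k_j}}\|x^{k_j+1}-w^{k_j}+\gamma_{k_j}\nabla f(w^{k_j})\|^2 \leq g(x_\star) + \tfrac{1}{2\gamma_{k_j}}\|x_\star-w^{k_j}+\gamma_{k_j}\nabla f(w^{k_j})\|^2.
\]
Passing to the limit \(j\to\infty\), both quadratic terms vanish, and the inequality becomes \(\limsup_j g(x^{k_j+1})\leq g(x_\star)\); combining with lower semicontinuity I obtain \(g(x^{k_j+1})\to g(x_\star)\), and continuity of \(f\) then gives \(\varphi(x^{k_j+1})\to\varphi(x_\star)\). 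Since the whole sequence \(\varphi(x^k)\) converges to \(\varphi_\infty\), I conclude \(\varphi(x_\star)=\varphi_\infty\) for every \(x_\star\in\omega(x^0)\).
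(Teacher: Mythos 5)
Your proposal follows essentially the same route as the paper: equation \eqref{Eq:diffx2sum} from the step-length bound \eqref{eq:diffx2Rwx} of \Cref{le:diffx2Rwx} combined with the square-summability of both residual sequences (\Cref{{prop:Rxl2},{prop:Rwl2}}, which is where lower boundedness and $\beta>0$ enter), and the properties of $\omega(x^0)$ via the standard vanishing-increment argument; the paper simply cites \cite[Lem.~5, Rem.~5]{bolte2014proximal} for what you spell out, and your proximal-minimality test against the competitor $x_\star$ is precisely the argument of \cite[Lem.~5(iv)]{bolte2014proximal} that the paper alludes to for the constancy of $\varphi$.

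One small inaccuracy to fix: the two quadratic terms in your prox-minimality inequality do \emph{not} vanish in the limit, since each contains the summand $\gamma_{k_j}\nabla f(w^{k_j})$, which tends to $\gamma_\infty\nabla f(x_\star)\neq 0$ in general. What is true is that both terms converge to the \emph{same} finite limit $\tfrac{\gamma_\infty}{2}\|\nabla f(x_\star)\|^2$ (because $x^{k_j+1}-w^{k_j}\to0$ and $x_\star-w^{k_j}\to0$, while $\gamma_{k_j}\to\gamma_\infty>0$ by \Cref{lem:LowerBoundGamma}), so their difference vanishes and the conclusion $\limsup_j g(x^{k_j+1})\leq g(x_\star)$ still follows. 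With that correction the argument is complete.
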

\begin{proof}
\eqref{Eq:diffx2sum} follows from \eqref{eq:diffx2Rwx}, \Cref{{prop:Rxl2},,{prop:Rwl2}}, and the fact that the sum of square-summable sequences is square summable.

If $\seq{x^k}$ is bounded, that $\omega(x^0)$ is nonempty, compact and connected and $\lim_{k\to\infty}\dist_{\omega(x^0)}(x^k)=0$ follow by~\cite[Lem. 5(ii),(iii), Remark 5]{bolte2014proximal}.
That $\varphi$ is constant on $\omega(x^0)$ follows by a similar argument as in~\cite[Lem. 5(iv)]{bolte2014proximal}.
\end{proof}

The following is \cite[Lem. 6]{bolte2014proximal}, therefore we state it with no proof.

\begin{lem}[Uniformized KL property]\label{le:KLuni}
Let $K\subset\Re^n$ be a compact set and suppose that the proper lower semi-continuous function $\varphi:\Re^n\to\Rinf$ is constant on $K$ and satisfies the KL property at every ${x}^\star\in K$. Then there exist $\varepsilon>0$, $\eta>0$, and a continuous concave function  $\psi:[0,\eta]\to[0,+\infty)$ such that properties \ref{Def:KL1}, \ref{Def:KL2} and \ref{Def:KL3} hold, and
\begin{enumlem}[{label=(\roman*')}]
	\setcounter{enumlemi}{3}
	\item for all $x_\star\in K$ and $x$ such that $\dist_K(x)<\varepsilon$ and
		\(
			\varphi(x_\star)< \varphi(x)<\varphi(x_\star)+\eta
		\),
		\begin{equation}\label{eq:KLuni}
			\psi'(\varphi(x)-\varphi(x_\star))\dist(0,\partial\varphi(x))\geq 1.
		\end{equation}
\end{enumlem}
\end{lem}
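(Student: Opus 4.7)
The plan is to combine the pointwise KL data at each $x^\star\in K$ into a single triple $(\varepsilon,\eta,\psi)$ by a standard compactness argument, using the fact that $\varphi$ takes a single value $c$ on $K$.

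First I would invoke the KL property at every $x^\star\in K$: this gives, for each such point, a number $\eta_{x^\star}>0$, an open neighborhood $U_{x^\star}$, and a desingularizing function $\psi_{x^\star}$ satisfying properties \ref{Def:KL1}--\ref{Def:KL3} and \ref{it:KLinequality}. The family $\set{U_{x^\star}}{x^\star\in K}$ is an open cover of the compact set $K$, so I extract a finite subcover $U_1,\ldots,U_p$ with associated data $\eta_1,\ldots,\eta_p$ and $\psi_1,\ldots,\psi_p$, and attach to each $U_i$ a representative $x^\star_i\in K$ (recall $\varphi(x^\star_i)=c$, since $\varphi$ is constant on $K$).

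Next I would extract the two uniform constants. Setting $\eta=\min_i\eta_i>0$ takes care of the range of the desingularizing function. For $\varepsilon$, I would argue that the set $C=\Re^n\setminus(U_1\cup\cdots\cup U_p)$ is closed and disjoint from the compact set $K$, so $\varepsilon\eqdef \dist(K,C)>0$; any $x$ with $\dist_K(x)<\varepsilon$ then lies in some $U_j$. To merge the desingularizing functions, I would simply define
\[
	\psi(s)
{}\eqdef{}
	\sum_{i=1}^p\psi_i(s),\qquad s\in[0,\eta].
\]
As a finite sum of concave, $\cont^1$, strictly increasing functions vanishing at $0$, $\psi$ trivially inherits \ref{Def:KL1}--\ref{Def:KL3}. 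Moreover, since each $\psi_i'$ is strictly positive on $(0,\eta)$, $\psi'(s)\geq\psi_j'(s)$ for every index $j$.

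The verification of \eqref{eq:KLuni} then reduces to picking, for a given $x$ with $\dist_K(x)<\varepsilon$ and $c<\varphi(x)<c+\eta$, the index $j$ with $x\in U_j$; because $\eta\leq\eta_j$ and $\varphi(x^\star_j)=c$, the hypothesis of \ref{it:KLinequality} at $x^\star_j$ is met, yielding $\psi_j'(\varphi(x)-c)\dist(0,\partial\varphi(x))\geq 1$, and the pointwise bound $\psi'\geq\psi_j'$ immediately promotes this to \eqref{eq:KLuni}. I do not expect any serious obstacle: the only subtle point is that one must use that $\varphi(x^\star_i)$ equals the \emph{same} constant $c$ for every $i$, which is exactly where the assumption that $\varphi$ is constant on $K$ enters, and that summing the $\psi_i$ preserves concavity while only increasing the derivative, which is why taking the sum (rather than, say, the min or max) is the right way to merge them.
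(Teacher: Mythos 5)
Your proof is correct: the paper states this lemma without proof, deferring to \cite[Lem.~6]{bolte2014proximal}, and your argument (finite subcover, $\eta=\min_i\eta_i$, $\varepsilon=\dist(K,\Re^n\setminus\bigcup_iU_i)$, and merging the desingularizing functions by summation so that $\psi'\geq\psi_j'$) is precisely the standard proof given in that reference. The only cosmetic point is that the neighborhoods in \Cref{def:KLproperty} should be taken open (or shrunk to open ones) before extracting the subcover, which costs nothing.
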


%%%%%%%%%%%%%%%%%%%%%%%%%%%%%%%%%%%%%%%%%%%%%%%%%%%%%%%%%%%%%%%%%%%%%%%%%%%%%%%%
%%%%%%%%%%%%%%%%%%%%%%%%%%%%%%%%%%%%%%%%%%%%%%%%%%%%%%%%%%%%%%%%%%%%%%%%%%%%%%%%
%%%%%%%%%%%%%%%%%%%%%%%%%%%%%%%%%%%%%%%%%%%%%%%%%%%%%%%%%%%%%%%%%%%%%%%%%%%%%%%%
%%%%%%%%%%%%%%%%%%%%%%%%%%%%%%%%%%%%%%%%%%%%%%%%%%%%%%%%%%%%%%%%%%%%%%%%%%%%%%%%
\begin{appendixproof}{lem:LowerBoundGamma}
Let $\seq{\gamma_k}$ be the sequence of stepsize parameters computed by \Cref{alg:Global}.
To arrive to a contradiction, suppose that $k_0$ is the smallest element of $\Nn$ such that
\[
	\gamma_{k_0} < \min\set{}{\gamma_0, \sigma(1-\beta)/L_f}.
\]
Clearly, $k_0 \geq 1$.
Moreover $\sigma^{-1}\gamma_{k_0}$ must satisfy the condition in step~\ref{step:GlobalCheck}:
for some \(w\in\Re^n\) (corresponding to \(w^k=x^k+\tau_k d^k\) selected before going back to step \ref{step:GlobalFixedStopping} after the condition in step \ref{step:GlobalCheck} is passed, which might differ from the final value of \(w^k\) after step \ref{step:GlobalCheck} is passed)
\[
	\varphi(T_{\sigma^{-1}\gamma_{k_0}}(w))
{}>{}
	\varphi_{\sigma^{-1}\gamma_{k_0}}(w)
	{}-{}
	\frac{\beta\sigma^{-1}\gamma_{k_0}}{2}\|R_{\sigma^{-1}\gamma_{k_0}}(w)\|^2.
\]
But from \Cref{prop:LowBnd} we also have
\begin{align*}
	\varphi(T_{\sigma^{-1}\gamma_{k_0}}(w))
{}\leq{} &
	\varphi_{\sigma^{-1}\gamma_{k_0}}(w)
	{}-{}
	\frac{\sigma^{-1}\gamma_{k_0}}{2}(1-\sigma^{-1}\gamma_{k_0}L_f)\|R_{\sigma^{-1}\gamma_{k_0}}(w)\|^2
\\
{}\leq{} &
	\varphi_{\sigma^{-1}\gamma_{k_0}}(w)
	{}-{}
	\frac{\beta\sigma^{-1}\gamma_{k_0}}{2}\|R_{\sigma^{-1}\gamma_{k_0}}(w)\|^2,
\end{align*}
where last inequality follows from $\sigma^{-1}\gamma_{k_0} < (1-\beta)/L_f$.
This leads to a contradiction, therefore
\(
	\gamma_k
{}\geq{}
	\min\set{}{\gamma_0, \sigma(1-\beta)/L_f}
\)
as claimed.
That \(\gamma_k\) is asymptotically constant follows since the sequence $\seq{\gamma_k}$ is nonincreasing.
\end{appendixproof}

%%%%%%%%%%%%%%%%%%%%%%%%%%%%%%%%%%%%%%%%%%%%%%%%%%%%%%%%%%%%%%%%%%%%%%%%%%%%%%%%
%%%%%%%%%%%%%%%%%%%%%%%%%%%%%%%%%%%%%%%%%%%%%%%%%%%%%%%%%%%%%%%%%%%%%%%%%%%%%%%%
%%%%%%%%%%%%%%%%%%%%%%%%%%%%%%%%%%%%%%%%%%%%%%%%%%%%%%%%%%%%%%%%%%%%%%%%%%%%%%%%
%%%%%%%%%%%%%%%%%%%%%%%%%%%%%%%%%%%%%%%%%%%%%%%%%%%%%%%%%%%%%%%%%%%%%%%%%%%%%%%%
\begin{appendixproof}{prop:GlobalDesc}
We have
\begin{align}
\varphi(x^{k+1}) & \leq \varphi_{\gamma_k}(w^k) - \tfrac{\beta\gamma_k}{2}\|R_{\gamma_k}(w^k)\|^2 \nonumber\\
  & \leq \varphi_{\gamma_k}(x^k) - \tfrac{\beta\gamma_k}{2}\|R_{\gamma_k}(w^k)\|^2 \label{eq:ChainDecr}\\
  & \leq \varphi(x^k)-\tfrac{\beta\gamma_k}{2}\|R_{\gamma_k}(w^k)\|^2-\tfrac{{\gamma_k}}{2}\|R_{\gamma_k}(x^k)\|^2,\nonumber
\end{align}
where the first inequality comes from step~\ref{step:GlobalCheck},
the second from step~\ref{step:GlobalStepsize}
and the third from \Cref{prop:UppBnd}.
This shows \ref{prop:FunDecr}.
Let \(\varphi_\star=\lim_{k\to\infty}\varphi(x^k)\), which exists since \(\seq{\varphi(x^k)}\) is monotone. If \(\varphi_\star=-\infty\), clearly \(\inf\varphi=-\infty\) and \(\omega(x^0)=\emptyset\) due to properness and lower semicontinuity of \(\varphi\) and to the monotonic behavior of \(\seq{\varphi(x^k)}\).
Otherwise, telescoping the inequality we get
\begin{equation}\label{eq:Telescopic}
	\frac 12
	\sum_{i=0}^k{
		\gamma_i
		\left(
			\beta\|R_{\gamma_i}(w^i)\|^2
			{}+{}
			\|R_{\gamma_i}(x^i)\|^2
		\right)
	}
{}\leq{}
	\varphi(x^0)
	{}-{}
	\varphi(x^{k+1})
{}\leq{}
	\varphi(x^0)
	{}-{}
	\varphi_\star
\end{equation}
and since $\gamma_k$ is uniformly lower bounded by a positive number (see \Cref{lem:LowerBoundGamma}) \ref{prop:Rxl2} follows, hence \ref{prop:ClusterCritical}.
If $\beta>0$, observing that for \(k\) large enough such that \(\gamma_k\equiv\gamma_\infty\) we have
\[
	\varphi_{\gamma_k}(w^{k+1})
\smash{{}\stackrel{\raisebox{3pt}{\tiny step~\ref{step:GlobalStepsize}}}{{}\leq{}}{}}
	\varphi_{\gamma_k}(x^{k+1})
\smash{{}\stackrel{\raisebox{3pt}{\tiny step~\ref{step:GlobalFB}}}{\vphantom{\leq}{}={}}{}}
	\varphi_{\gamma_k}(T_k(w^k))
~{}\leq{}~
	\varphi_{\gamma_k}(w^k),
\]
similar argumentations as those for proving \ref{prop:Rxl2} show \ref{prop:Rwl2}.
\end{appendixproof}

%%%%%%%%%%%%%%%%%%%%%%%%%%%%%%%%%%%%%%%%%%%%%%%%%%%%%%%%%%%%%%%%%%%%%%%%%%%%%%%%
%%%%%%%%%%%%%%%%%%%%%%%%%%%%%%%%%%%%%%%%%%%%%%%%%%%%%%%%%%%%%%%%%%%%%%%%%%%%%%%%
%%%%%%%%%%%%%%%%%%%%%%%%%%%%%%%%%%%%%%%%%%%%%%%%%%%%%%%%%%%%%%%%%%%%%%%%%%%%%%%%
%%%%%%%%%%%%%%%%%%%%%%%%%%%%%%%%%%%%%%%%%%%%%%%%%%%%%%%%%%%%%%%%%%%%%%%%%%%%%%%%
\begin{appendixproof}{th:RateGlobal}
If \(\inf\varphi=-\infty\) there is nothing to prove.
Otherwise, since the sequence $\seq{\gamma_k}$ is nonincreasing, from \eqref{eq:Telescopic} we get
$$ \frac{(k+1)\gamma_{k}}{2}\left(\min_{i=0\ldots k}\|R_{\gamma_{i}}(x^i)\|^2 +
  \beta \min_{i=0\ldots k}\|R_{\gamma_{i}}(w^i)\|^2\right)
  \leq \varphi(x^0) - \inf\varphi. $$
Rearranging the terms and invoking \Cref{lem:LowerBoundGamma} gives the result.
\end{appendixproof}

%%%%%%%%%%%%%%%%%%%%%%%%%%%%%%%%%%%%%%%%%%%%%%%%%%%%%%%%%%%%%%%%%%%%%%%%%%%%%%%%
%%%%%%%%%%%%%%%%%%%%%%%%%%%%%%%%%%%%%%%%%%%%%%%%%%%%%%%%%%%%%%%%%%%%%%%%%%%%%%%%
%%%%%%%%%%%%%%%%%%%%%%%%%%%%%%%%%%%%%%%%%%%%%%%%%%%%%%%%%%%%%%%%%%%%%%%%%%%%%%%%
%%%%%%%%%%%%%%%%%%%%%%%%%%%%%%%%%%%%%%%%%%%%%%%%%%%%%%%%%%%%%%%%%%%%%%%%%%%%%%%%
\begin{appendixproof}{th:RateGlobalConv}
The proof is similar to that of~\cite[Thm. 4]{nesterov2013gradient}.
By \Cref{it:bndUpCvx} we know that
$\varphi_\gamma\leq\varphi^\gamma$ for any $\gamma>0$. Combining
this with \eqref{eq:ChainDecr} we get
\begin{equation}\label{eq:MoreauEnvelopeDescent}
	\varphi(x^{k+1})\leq
	%\varphi_{\gamma_{k}}(x^k)\leq \varphi^{\gamma_{k}}(x^k)=
	\min_{x\in \OurSpace}\set{}{\varphi(x)+\tfrac{1}{2{\gamma_{k}}}\|x-x^k\|^2},
\end{equation}
and in particular, for $x_\star\in \argmin\varphi$,
\begin{align*}
	\varphi(x^{k+1})
{}\leq{} &
	\min_{\alpha\in[0,1]}{
		\set{}{
			\varphi(\alpha x_\star+(1-\alpha)x^k)+\tfrac{\alpha^2}{2{\gamma_{k}}}\|x^k-x_\star\|^2
		}
	}
\\
{}\leq{} &
	\min_{\alpha\in[0,1]}{
		\set{}{
			\varphi(x^k)-\alpha(\varphi(x^k)-\inf\varphi)+\tfrac{R^2}{2{\gamma_{k}}}\alpha^2
		}
	},
\end{align*}
where the last inequality follows by convexity of $\varphi$.
If $\varphi(x^0)-\inf\varphi\geq R^2/{\gamma_0}$, then the optimal solution of the latter problem for $k=0$ is $\alpha=1$ and we obtain~\eqref{eq:FirstStep}.
Otherwise, the optimal solution is
$$\alpha=\frac{{\gamma_{k}}(\varphi(x^k)-\inf\varphi)}{R^2}\leq \frac{{\gamma_{k}}(\varphi(x^0)-\inf\varphi)}{R^2}\leq 1,$$
and we obtain
$$\varphi(x^{k+1})\leq \varphi(x^k)-\frac{{\gamma_{k}}(\varphi(x^k)-\inf\varphi)^2}{2R^2}.$$
Letting $\lambda_k=\frac{1}{\varphi(x^k)-\inf\varphi}$ the latter inequality is expressed as
$$\frac{1}{\lambda_{k+1}}\leq\frac{1}{\lambda_k}-\frac{{\gamma_{k}}}{2R^2\lambda_{k+1}^2}.$$
Multiplying both sides by $\lambda_{k}\lambda_{k+1}$ and rearranging
\begin{align*}
\lambda_{k+1}\geq\lambda_k+\frac{{\gamma_{k}}}{2R^2}\frac{\lambda_{k+1}}{\lambda_k}\geq \lambda_k+\frac{{\gamma_{k}}}{2R^2},
\end{align*}
where the latter inequality follows from the fact that $\seq{\varphi(x^{k})}$ is nonincreasing,
cf. \Cref{prop:FunDecr}.
Telescoping the inequality and using \Cref{lem:LowerBoundGamma}, we obtain
$$ \lambda_k\geq\lambda_0+\frac{k\min\set{}{\gamma_0, \sigma(1-\beta)/L_f}}{2R^2}
  \geq\frac{k\min\set{}{\gamma_0, \sigma(1-\beta)/L_f}}{2R^2}. $$
% where the last inequality follows by $\varphi(x^0)-\inf\varphi\leq R^2/{\gamma_0}$.
Rearranging, we arrive at~\eqref{eq:kStep}.
\end{appendixproof}

%%%%%%%%%%%%%%%%%%%%%%%%%%%%%%%%%%%%%%%%%%%%%%%%%%%%%%%%%%%%%%%%%%%%%%%%%%%%%%%%
%%%%%%%%%%%%%%%%%%%%%%%%%%%%%%%%%%%%%%%%%%%%%%%%%%%%%%%%%%%%%%%%%%%%%%%%%%%%%%%%
%%%%%%%%%%%%%%%%%%%%%%%%%%%%%%%%%%%%%%%%%%%%%%%%%%%%%%%%%%%%%%%%%%%%%%%%%%%%%%%%
%%%%%%%%%%%%%%%%%%%%%%%%%%%%%%%%%%%%%%%%%%%%%%%%%%%%%%%%%%%%%%%%%%%%%%%%%%%%%%%%
\begin{appendixproof}{thm:LocalLinConvCVX}
%Notice that \eqref{eq:SOSCphigamma} implies through \eqref{eq:UppBnd} that also
%\begin{equation}\label{eq:SOSCphi}
%	\varphi(x) - \varphi(x_\star) \geq \tfrac{c}{2}\|x-x_\star\|^2,\quad \forall x\in N.
%\end{equation}
If \eqref{eq:SOSCphi} holds, then $\varphi$ has bounded level sets and \(\zer\partial\varphi=\set{}{x_\star}\).
In particular, \(\omega(x^0)\neq\emptyset\) and \Cref{prop:ClusterCritical} then ensures \(x^k\to x_\star\).
Therefore, there is $k_0\in\Nn$ such that $x^k \in N$ for all $k\geq k_0$.
Inequality \eqref{eq:MoreauEnvelopeDescent} holds, and in particular for $k\geq k_0$
\begin{align*}
	\varphi(x^{k+1})
{}\leq{} &
	\min_{\alpha\in[0,1]}{
		\set{}{
			\varphi(\alpha x_\star+(1-\alpha)x^k)+\tfrac{\alpha^2}{2{\gamma_k}}\|x_\star - x^k\|^2
		}
	}
\\
{}\leq{} &
	\min_{\alpha\in[0,1]}{
		\set{}{
			\varphi(x^k)+\alpha\left(\tfrac{\alpha}{c{\gamma_k}}-1\right)(\varphi(x^k)-\inf\varphi)
		}
	},
\end{align*}
where the second inequality follows by convexity of $\varphi$ and \eqref{eq:SOSCphi}.
The minimum of last expression is achieved for $\alpha = \min\set{}{1,\tfrac{c}{2}{\gamma_k}}$.
When $\gamma_k < 2c^{-1}$ we have the bound
$$ \varphi(x^{k+1}) - \inf\varphi \leq (1-\tfrac{c}{4}\gamma_k)(\varphi(x^k) - \inf\varphi). $$
When instead $\gamma_k\geq 2c^{-1}$ we have the bound
$$ \varphi(x^{k+1}) - \inf\varphi \leq (c\gamma_k)^{-1}(\varphi(x^k) - \inf\varphi). $$
\Cref{lem:LowerBoundGamma} ensures $\gamma_k \geq \min\set{}{\gamma_0,\sigma(1-\beta)/L_f}$,
so $\max\set{}{1-\tfrac{c}{4}\gamma_k, (c\gamma_k)^{-1}} \leq \omega$.
This proves the claim on the sequence $\seq{\varphi(x^k)}[k\geq k_0]$ and using inequality \eqref{eq:ChainDecr}
the same holds for $\seq{\varphi_{\gamma_k}(w^k)}[k\geq k_0]$. From the error bound \eqref{eq:SOSCphi} we
obtain that $x^k\to x_\star$ linearly.
If the same error bound holds for $\varphi_{\gamma_\infty}$, then also $w^k\to x_\star$ linearly.
%then \eqref{eq:ChainDecr} and \eqref{eq:SOSCphi} together with \eqref{eq:LinConvCVXPhi} and \eqref{eq:LinConvCVXPhiGamma}
%give \eqref{eq:LinConvCVXx} and \eqref{eq:LinConvCVXw}.
%The rate of convergence is linear, since $\gamma_k \geq \gamma_\min > 0$ for all $k$,
%see \Cref{lem:LowerBoundGamma}, and actually $(1-\omega_k)$ is bounded away from zero.
\end{appendixproof}

%%%%%%%%%%%%%%%%%%%%%%%%%%%%%%%%%%%%%%%%%%%%%%%%%%%%%%%%%%%%%%%%%%%%%%%%%%%%%%%%
%%%%%%%%%%%%%%%%%%%%%%%%%%%%%%%%%%%%%%%%%%%%%%%%%%%%%%%%%%%%%%%%%%%%%%%%%%%%%%%%
%%%%%%%%%%%%%%%%%%%%%%%%%%%%%%%%%%%%%%%%%%%%%%%%%%%%%%%%%%%%%%%%%%%%%%%%%%%%%%%%
%%%%%%%%%%%%%%%%%%%%%%%%%%%%%%%%%%%%%%%%%%%%%%%%%%%%%%%%%%%%%%%%%%%%%%%%%%%%%%%%
\begin{appendixproof}{thm:Convergence}
The case where the sequence is finite does not deserve any further investigation, therefore we assume that $\seq{x^k}$ is infinite. We then assume that $R_{\gamma_k}(x^k)\neq 0$ which implies through \Cref{prop:GlobalDesc} that $\varphi(x^{k+1})<\varphi(x^k)$.
Due to \eqref{Eq:omega_props}, the KL property for $\varphi$, and \Cref{le:KLuni}, there exist  $\varepsilon,\eta>0$ and a continuous concave function  $\psi:[0,\eta]\to[0,+\infty)$ such that for all $x$ with $\dist_{\omega(x^0)}(x)<\varepsilon$ and $\varphi({x}^\star)< \varphi(x)<\varphi(x_\star)+\eta$ one has
\[
	\psi'(\varphi(x)-\varphi(x_\star))\dist(0,\partial\varphi(x))\geq 1.
\]
According to \Cref{prop:diffx2sum} there exists a $k_1\in\Nn$ such that $\dist_{\omega(x^0)}(x^k)<\varepsilon$ for all $k\geq k_1$. Furthermore, since $\varphi(x^k)$ converges to $\varphi(x_\star)$ there exists a $k_2$ such that $\varphi(x^k)<\varphi(x_\star)+\eta$ for all $k\geq k_2$. Take $\bar{k}=\max\set{}{k_1,k_2}$. Then for every $k\geq\bar{k}$ we have
\[
	\psi'(\varphi(x^k)-\varphi(x_\star))\dist(0,\partial\varphi(x^k))\geq 1.
\]
From \Cref{prop:FunDecr}
\[
	\varphi(x^{k+1})\leq \varphi(x^k)-\tfrac{\beta\gamma_k}{2}\|R_{\gamma_k}(w^k)\|^2.
\]
For every $k>0$ let
\(
	\tilde{\nabla}\varphi(x^k)
{}={}
	\nabla f(x^{k})-\nabla f(w^{k-1})+R_{\gamma_{k-1}}(w^{k-1})
\).
Since
\(
	R_{\gamma_{k-1}}(w^{k-1})
{}\in{}
	\nabla f(w^{k-1}) + \partial g(x^k)
\),
then
\(
	\tilde{\nabla}\varphi(x^k)
{}\in{}
	\partial\varphi(x^k)
\)
and
\begin{align*}
	\|\tilde{\nabla}\varphi(x^k)\|
{}\leq{} &
	\|\nabla f(x^{k})-\nabla f(w^{k-1})\|
	{}+{}
	\|R_{\gamma_{k-1}}(w^{k-1})\|
\\
{}={} &
	(1+{\gamma_{k-1}} L_f)
	\|R_{\gamma_{k-1}}(w^{k-1})\|.
\end{align*}
From \eqref{eq:KLuni}
\[
	\psi'(\varphi(x^{k})-\varphi(x_\star))
{}\geq{}
	\frac{1}{\|\tilde{\nabla}\varphi(x^k)\|}
{}\geq{}
	\frac{1}{(1+{\gamma_{k-1}} L_f)
	\|R_{\gamma_{k-1}}(w^{k-1})\|}.
\]
Let $\Delta_k= \psi(\varphi(x^{k})-\varphi(x_\star))$.
By concavity of $\psi$ and \Cref{prop:FunDecr}
\begin{align*}
\Delta_k-\Delta_{k+1}&\geq\psi'(\varphi(x^{k})-\varphi(x_\star))(\varphi(x^k)-\varphi(x^{k+1}))\\
&\geq \frac{\beta\gamma_k}{2 (1+\gamma_{k-1} L_f)}\frac{\|R_{\gamma_{k}}(w^k)\|^2}{\|R_{\gamma_{k-1}}(w^{k-1})\|}\\
&\geq \frac{\beta\gamma_{\min}}{2 (1+\gamma_0 L_f)}\frac{\|R_{\gamma_{k}}(w^k)\|^2}{\|R_{\gamma_{k-1}}(w^{k-1})\|}
\end{align*}
where $\gamma_\min = \min\set{}{\gamma_0, \sigma(1-\beta)/L_f}$, see \Cref{lem:LowerBoundGamma}, or
\begin{equation}\|R_{\gamma_{k}}(w^{k})\|^2\leq \alpha(\Delta_k-\Delta_{k+1})\|R_{\gamma_{k-1}}(w^{k-1})\|
\label{eq:wResConvergence}\end{equation}
where $\alpha= 2 (1+\gamma_0 L_f)/(\beta\gamma_{\min})$.
Applying \Cref{le:finiteLength} with
$$\delta_k=\alpha\Delta_k,\quad\beta_k=\|R_{\gamma_{k-1}}(w^{k-1})\|,$$
we conclude that $\sum_{k=0}^\infty\|R_{\gamma_{k}}(w^{k})\|<\infty$.
From~\eqref{eq:diffx2Rw}, using the fact that $\gamma_k\leq \gamma_0$ for all $k$,
then it follows that
$$\sum_{k=0}^{\infty}\|x^{k+1}-x^k\|<\infty.$$
Then $\seq{x^k}$ is a Cauchy sequence, hence it converges to a point
that, by \Cref{prop:GlobalDesc}, is a critical point $x_\star$ of $\varphi$.
\end{appendixproof}

%%%%%%%%%%%%%%%%%%%%%%%%%%%%%%%%%%%%%%%%%%%%%%%%%%%%%%%%%%%%%%%%%%%%%%%%%%%%%%%%
%%%%%%%%%%%%%%%%%%%%%%%%%%%%%%%%%%%%%%%%%%%%%%%%%%%%%%%%%%%%%%%%%%%%%%%%%%%%%%%%
%%%%%%%%%%%%%%%%%%%%%%%%%%%%%%%%%%%%%%%%%%%%%%%%%%%%%%%%%%%%%%%%%%%%%%%%%%%%%%%%
%%%%%%%%%%%%%%%%%%%%%%%%%%%%%%%%%%%%%%%%%%%%%%%%%%%%%%%%%%%%%%%%%%%%%%%%%%%%%%%%
\begin{appendixproof}{thm:LocalLinConvKL}
\Cref{thm:Convergence} ensures that $\seq{x^k}$ converges to a critical point, be it $x_\star$.
We know from \Cref{lem:LowerBoundGamma} that eventually $\gamma_k = \gamma_\infty > 0$,
therefore we assume $k$ is large enough for this purpose and indicate $\gamma$
in place of $\gamma_k$ for simplicity. Denoting $A_k=\sum_{i=k}^\infty\|x^{i+1}-x^i\|$
clearly $A_k \geq \|x^k-x_\star\|$,
so we will prove that $A_k$ converges linearly to zero to obtain the result.
Note that by \eqref{eq:diffx2Rw} we know that
$$ \|x^{i+1}-x^i\| \leq \gamma\|R_{\gamma}(w^i)\| +
	\bar{\tau} c (1+\gamma L_f)\|R_{\gamma}(w^{i-1})\|. $$
Therefore we can upper bound $A_k$ as follows
\begin{align}
	\nonumber
	A_k
{}\leq{} &
	\textstyle
	\bar{\tau} c (1+\gamma L_f)\|R_{\gamma}(w^{k-1})\|
	{}+{}
	\left(
		\gamma + \bar{\tau}c(1+\gamma L_f)
	\right)
	\sum_{i=k}^\infty{
		\|R_{\gamma}(w^i)\|
	}
\\
{}\leq{} &
	\textstyle
	\left(
		\gamma + \bar{\tau}c(1+\gamma L_f)
	\right)
	\sum_{i=k-1}^\infty{
		\|R_{\gamma}(w^i)\|,
	}
	\label{eq:uppBndA}
\end{align}
and reduce the problem to proving linear convergence of $B_k = \sum_{i=k}^\infty \|R_{\gamma}(w^i)\|$.
When $\psi$ is as in \eqref{eq:LojasiewiczProperty}, for sufficiently large
$k$ the KL inequality reads
\begin{equation*}
  \varphi(x^k)-\varphi(x_\star) \leq [\sigma(1-\theta)\|v^k\|]^{\frac{1}{\theta}},\quad\forall v^k\in\partial\varphi(x^k).
\end{equation*}
Taking $v^k = \nabla f(x^k) - \nabla f(w^{k-1}) + R_{\gamma}(w^{k-1}) \in \partial\varphi(x^k)$,
this in turn yields
\begin{equation}
  \varphi(x^k)-\varphi(x_\star) \leq
    \left[\sigma(1-\theta)(1+{\gamma} L_f)\|R_{\gamma}(w^{k-1})\|\right]^{\frac{1}{\theta}},
  \label{eq:LojasiewiczIneq}
\end{equation}
(see the proof of \Cref{thm:Convergence}). Inequality \eqref{eq:wResConvergence} holds,
for sufficiently large $k$, with
$\Delta_k = \sigma(\varphi(x^k)-\varphi(x_\star))^{1-\theta}$ in this case.
Applying \Cref{le:finiteLength} with
$$\delta_k=\alpha\Delta_{k},\quad\beta_k=\|R_{\gamma}(w^{k-1})\|=B_{k-1}-B_k,$$
we obtain
\begin{align*}
B_k	&\leq (B_{k-1} - B_k) + \sigma(\varphi(x^k)-\varphi(x_\star))^{1-\theta} \\
  	&\leq (B_{k-1} - B_k) + \sigma\left[\sigma(1-\theta)(1+{\gamma} L_f)(B_{k-1} - B_k)\right]^{\frac{1-\theta}{\theta}},
\end{align*}
where the second inequality is due to \eqref{eq:LojasiewiczIneq}. Since
$B_{k-1}-B_k \to 0$, then for $k$ large enough it holds that $\sigma (1+{\gamma} L_f)(B_{k-1}-B_k) \leq 1$,
and the last term in the previous chain of inequalities is increasing in
$\theta$ when $\theta\in (0,\tfrac{1}{2}]$. Therefore $B_k$ eventually satisfies
$$ B_k \leq C(B_{k-1}-B_k), $$
where $C>0$, and so $B_k \leq [C/(1+C)] B_{k-1}$, \ie, $B_k$ converges
to zero $Q$-linearly. This in turn implies that $\|x^k-x_\star\|$ converges to zero with $R$-linear rate.
Furthermore,
\begin{align*}
\|w^k-x_\star\| &= \|x^k - x_\star + \tau_k d^k\| \\
	&\leq \|x^k-x_\star\| + \bar{\tau}c\|R_{\gamma_k}(x^k)\| \\
	&=    \|x^k-x_\star\| + \bar{\tau}c{\gamma_k}^{-1}\|T_{\gamma_k}(x^k)-x^k\| \\
	&\leq (1+\bar{\tau}c\gamma_k^{-1})\|x^k-x_\star\| + \bar{\tau}c{\gamma_k}^{-1}\|T_{\gamma_k}(x^k)-T_{\gamma_k}(x_\star)\|\\
	&\leq (1+\bar{\tau}c\gamma_k^{-1})\|x^k-x_\star\| + \bar{\tau}c{\gamma_k}^{-1}\|x^k - \gamma_k\nabla f(x^k)- x_\star + \gamma_k\nabla f(x_\star)\| \\
	&\leq (1+\bar{\tau}c(2\gamma_k^{-1} + L_f))\|x^k-x_\star\|,
\end{align*}
where the last two inequalities follow by nonexpansiveness of $\prox_{\gamma g}$ and Lipschitz continuity of $\nabla f$.
Since $\gamma_k$ is lower bounded by a positive quantity, then we deduce that also $\|w^k-x_\star\|$ converges $R$-linearly
to zero.
\end{appendixproof}

%%%%%%%%%%%%%%%%%%%%%%%%%%%%%%%%%%%%%%%%%%%%%%%%%%%%%%%%%%%%%%%%%%%%%%%%%%%%%%%%
%%%%%%%%%%%%%%%%%%%%%%%%%%%%%%%%%%%%%%%%%%%%%%%%%%%%%%%%%%%%%%%%%%%%%%%%%%%%%%%%
%%%%%%%%%%%%%%%%%%%%%%%%%%%%%%%%%%%%%%%%%%%%%%%%%%%%%%%%%%%%%%%%%%%%%%%%%%%%%%%%
%%%%%%%%%%%%%%%%%%%%%%%%%%%%%%%%%%%%%%%%%%%%%%%%%%%%%%%%%%%%%%%%%%%%%%%%%%%%%%%%
%\begin{appendixproof}

%\end{appendixproof}

	\section{\appendixProofTitle{SEC:QuasiNewton}}
		%%%%%%%%%%%%%%%%%%%%%%%%%%%%%%%%%%%%%%%%%%%%%%%%%%%%%%%%%%%%%%%%%%%%%%%%%%%%%%%%
%%%%%%%%%%%%%%%%%%%%%%%%%%%%%%%%%%%%%%%%%%%%%%%%%%%%%%%%%%%%%%%%%%%%%%%%%%%%%%%%
%%%%%%%%%%%%%%%%%%%%%%%%%%%%%%%%%%%%%%%%%%%%%%%%%%%%%%%%%%%%%%%%%%%%%%%%%%%%%%%%
%%%%%%%%%%%%%%%%%%%%%%%%%%%%%%%%%%%%%%%%%%%%%%%%%%%%%%%%%%%%%%%%%%%%%%%%%%%%%%%%
\begin{appendixproof}{thm:SuperlinearConvergence1}
Since $w^k = x^k - B_k^{-1}\nabla\varphi_\gamma(x^k)$, letting \(k\to\infty\) and using \eqref{eq:DennisMoreCondition} we have that
\begin{align*}
	0
{}\leftarrow{}
	\frac{(B_k - \nabla^2\varphi_\gamma(x_\star))(w^k-x^k)}{\|w^k-x^k\|}
{}={} &
	{}-\frac{\nabla\varphi_\gamma(x^k) + \nabla^2\varphi_\gamma(x_\star)(w^k-x^k)}{\|w^k-x^k\|}
\\
{}={} &
	{}-\frac{\nabla\varphi_\gamma(x^k) - \nabla\varphi_\gamma(w^k) + \nabla^2\varphi_\gamma(x_\star)(w^k-x^k)}{\|w^k-x^k\|}
\\
 &
	{}-\frac{\nabla\varphi_\gamma(w^k)}{\|w^k-x^k\|}.
\end{align*}
By strict differentiability of $\nabla\varphi_\gamma$ at $x_\star$ we obtain
\begin{equation}\label{eq:LimIters}
	\lim_{k\to\infty}{
		\frac{
			\|\nabla\varphi_\gamma(w^k)\|
		}{
			\|w^k-x^k\|
		}
	}
{}={}
	0
\end{equation}
By nonsingularity of $\nabla^2\varphi_\gamma(x_\star)$ and since \(w^k\to x^\star\), there exist $\alpha>0$ such that
\(
	\|\nabla\varphi_\gamma(x^k)\|\geq\alpha\|x^k-x_\star\|
\)
for \(k\) large enough.
Therefore, for $k$ sufficiently large,
\[
	\frac{
		\|\nabla\varphi_\gamma(w^k)\|
	}{
		\|w^k-x^k\|
	}
{}\geq{}
	\frac{
		\alpha\|w^k-x_\star\|
	}{
		\|w^k-x^k\|
	}
{}\geq{}
	\frac{
		\alpha\|w^k-x_\star\|
	}{
		\|w^k-x_\star\|+\|x^k-x_\star\|
	}.
\]
Using \eqref{eq:LimIters} we get
\[
	\lim_{k\to\infty}{
		\frac{
			\|w^k-x_\star\|
		}{
			\|w^k-x_\star\|+\|x^k-x_\star\|
		}
	}
{}={}
	\lim_{k\to\infty}{
		\frac{
			\|w^k-x_\star\|/\|x^k-x_\star\|
		}{
			\|w^k-x_\star\|/\|x^k-x_\star\|+1
		}
	}
{}={}
	0,
\]
from which we obtain
\begin{equation}\label{eq:wxIterates}
	\lim_{k\to\infty}{
		\frac{\|w^k-x_\star\|}{\|x^k-x_\star\|}
	}
{}={}
	0.
\end{equation}
Finally,
\begin{align}
\nonumber
	\|x^{k+1}-x_\star\|
{}={} &
	\|T_\gamma(w^k)-T_\gamma(x_\star)\|
\\
\nonumber
{}={} &
	\left\|
		\prox_{\gamma g}(w^k-\gamma\nabla f(w^k))
		{}-{}
		\prox_{\gamma g}(x_\star-\gamma\nabla f(x_\star))
	\right\|
\\
\nonumber
{}\leq{} &
	\left\|
		w^k - \gamma\nabla f(w^k)
		{}-{}
		x_\star + \gamma\nabla f(x_\star)
	\right\|
\\
\label{eq:xkBound}
{}\leq{} &
	(1+\gamma L_f)\|w^k-x_\star\|,
\end{align}
where the first inequality follows from nonexpansiveness of $\prox_{\gamma g}$ and the second
from Lipschitz continuity of $\nabla f$. Using \eqref{eq:xkBound} in \eqref{eq:wxIterates}
we obtain that $\seq{x^k}$ converges to $x_\star$ $Q$-superlinearly, and
$\seq{w^k}$ converges $R$-superlinearly.
\end{appendixproof}

%%%%%%%%%%%%%%%%%%%%%%%%%%%%%%%%%%%%%%%%%%%%%%%%%%%%%%%%%%%%%%%%%%%%%%%%%%%%%%%%
%%%%%%%%%%%%%%%%%%%%%%%%%%%%%%%%%%%%%%%%%%%%%%%%%%%%%%%%%%%%%%%%%%%%%%%%%%%%%%%%
%%%%%%%%%%%%%%%%%%%%%%%%%%%%%%%%%%%%%%%%%%%%%%%%%%%%%%%%%%%%%%%%%%%%%%%%%%%%%%%%
%%%%%%%%%%%%%%%%%%%%%%%%%%%%%%%%%%%%%%%%%%%%%%%%%%%%%%%%%%%%%%%%%%%%%%%%%%%%%%%%
\begin{appendixproof}{thm:SuperlinearConvergence2}
From \Cref{Thm:ContSemidiff} it follows that $\nabla\varphi_\gamma$ is strictly differentiable and continuosly semidifferentiable at \(x_\star\).
Moreover, we know from \Cref{lem:LowerBoundGamma} that eventually $\gamma_k = \gamma_\infty > 0$.
Therefore we assume that $k$ is large enough for this purpose and indicate $\gamma$
in place of $\gamma_k$ for simplicity.
We denote for short $g^k = \nabla\varphi_\gamma(x^k)$.
In \Cref{alg:Global}
$$ w^k-x^k = \tau_k d^k = -\tau_k B_k^{-1}g^k, $$
and by \eqref{eq:DennisMoreCondition} and Cauchy-Schwarz inequality
\begin{align}
%\lim_{k\to\infty}
\frac{\|(B_k-\nabla^2\varphi_\gamma(x_\star))(w^k-x^k)\|}{\|w^k-x^k\|} &= %\lim_{k\to\infty}
\frac{\|g^k+\nabla^2\varphi_\gamma(x_\star)d^k\|}{\|d^k\|} \nonumber \\
&\geq
	\left|\frac{\innprod{d^k}{g^k+\nabla^2\varphi_\gamma(x_\star)d^k}}{\|d^k\|^2}\right| \to 0. \nonumber
\end{align}
Therefore
\begin{equation} -\innprod{g^k}{d^k} = \innprod{d^k}{\nabla^2\varphi_\gamma(x_\star)d^k} + o(\|d^k\|^2). \label{eq:SecondOrderTermApprox}\end{equation}
Since $\nabla^2\varphi_\gamma(x_\star)$ is positive definite, then there is $\eta>0$ such that
for sufficiently large $k$
\begin{equation} - \innprod{g^k}{d^k} \geq \eta\|d^k\|^2. \label{eq:Idontknow}\end{equation}
Since $\Bjac{\nabla\varphi_\gamma}{}$ is continuous at $x_\star$ and $x^k\to x_\star$, we have
\begin{equation}\label{eq:ContSemider}
\|\Bjac{\nabla\varphi_\gamma}{x^k}[d^k] - \nabla^2 \varphi_\gamma(x_\star)d^k\| = o(\|d^k\|).
\end{equation}
Next, since $x^k\to x_\star$, for $k$ large enough $\nabla\varphi_\gamma$ is semidifferentiable at $x^k$ and we can expand $\varphi_\gamma$ around $x^k$ using \cite[Ex. 13.7(c)]{rockafellar2011variational} to obtain
\begin{align*}
\varphi_\gamma(x^k+d^k) - \varphi_\gamma(x^k) &= \innprod{g^k}{d^k}
		+ \tfrac{1}{2}\innprod{d^k}{\Bjac{\nabla\varphi_\gamma}{x^k}[d^k]} + o(\|d^k\|^2) \\
	&= \innprod{g^k}{d^k} + \tfrac{1}{2}\innprod{d^k}{\nabla^2\varphi_\gamma(x_\star)d^k} + o(\|d^k\|^2) \\
	&= \tfrac{1}{2}\innprod{g^k}{d^k} + o(\|d^k\|^2),
\end{align*}
where the second equality is due to \eqref{eq:ContSemider}, and the last equality is due to \eqref{eq:SecondOrderTermApprox}. Therefore, using \eqref{eq:Idontknow}, for sufficiently large $k$
%and $c_1<\tfrac{1}{2}$ one has
\begin{align*}
%\varphi_\gamma(x^k+d^k) - \varphi_\gamma(x^k) - c_1\innprod{g^k}{d^k} \leq (\tfrac{1}{2}-c_1)\innprod{g^k}{d^k} < 0.
\varphi_\gamma(x^k+d^k) - \varphi_\gamma(x^k) \leq -\tfrac{\eta}{2}\|d^k\|^2 < 0.
\end{align*}
\ie, $\tau_k = 1$ satisfies the non-increase condition.
As a consequence, \Cref{alg:Global} eventually reduces to the iterations of
\Cref{thm:SuperlinearConvergence1} and the proof follows.
\end{appendixproof}

%%%%%%%%%%%%%%%%%%%%%%%%%%%%%%%%%%%%%%%%%%%%%%%%%%%%%%%%%%%%%%%%%%%%%%%%%%%%%%%%
%%%%%%%%%%%%%%%%%%%%%%%%%%%%%%%%%%%%%%%%%%%%%%%%%%%%%%%%%%%%%%%%%%%%%%%%%%%%%%%%
%%%%%%%%%%%%%%%%%%%%%%%%%%%%%%%%%%%%%%%%%%%%%%%%%%%%%%%%%%%%%%%%%%%%%%%%%%%%%%%%
%%%%%%%%%%%%%%%%%%%%%%%%%%%%%%%%%%%%%%%%%%%%%%%%%%%%%%%%%%%%%%%%%%%%%%%%%%%%%%%%
\begin{appendixproof}{thm:SuperlinearConvergenceBFGS}
Suppose that \Cref{ass:Convex} holds. Since $x_\star\in\zer\partial\varphi$
and $\nabla^2\varphi_\gamma(x_\star) \succ 0$,
it follows that $x_\star$ is a strong local minimizer of $\varphi_\gamma$, hence of \(\varphi\) in light of \Cref{{prop:UppBnd},,{prop:FBEzer}}.
\Cref{thm:LocalLinConvCVX} then ensures that $\seq{x^k}$ and $\seq{w^k}$ converge linearly to $x_\star$.
If instead $\seq{\|B_k^{-1}\|}$ is bounded and \Cref{ass:KL} holds, then \Cref{thm:LocalLinConvKL} applies and again $\seq{x^k}$ and $\seq{w^k}$ converge linearly to a critical point, be it $x_\star$.
In both cases we can apply \Cref{prop:Calm} and for $k$ sufficiently large
\begin{equation}\label{eq:Summable}
\frac{\|y^k-\nabla^2\varphi_\gamma(x_\star)s^k\|}{\|s^k\|} \leq L\max\set{}{\|w^k-x\|,\|x^k-x\|}.
\end{equation}
Since the convergence is linear, then the right-hand side of \eqref{eq:Summable} is summable.
%When both Wolfe conditions \eqref{eq:Wolfe1}-\eqref{eq:Wolfe2} are enforced by the line-search
%then $\innprod{s^k}{y^k}>0$ for all $k\geq 0$.
%If instead only the first Wolfe condition \eqref{eq:Wolfe1} is enforced,
%then with similar arguments to those of \cite[Lem. 3.2]{dennis1974characterization}
With similar arguments to those of \cite[Lem. 3.2]{dennis1974characterization}
we can see that eventually $\innprod{s^k}{y^k}>0$.
%In both cases 
Therefore we can apply \cite[Thm. 3.2]{byrd1989tool}, which ensures that
condition \eqref{eq:DennisMoreCondition} holds. The result follows then from \Cref{thm:SuperlinearConvergence2}.
\end{appendixproof}

\end{appendix}

\end{document}